\newtheorem{thm}{Theorem}[section]
\newtheorem{lemma}[thm]{Lemma}
\theoremstyle{definition}
\newtheorem{remark}[thm]{Remark}
\def\XXint#1#2#3{{\setbox0=\hbox{$#1{#2#3}{\int}$}
         \vcenter{\hbox{$#2#3$}}\kern-.5\wd0}}
\def\R{\mathbb{R}}
\def\e{\varepsilon}
\def\loc{\text{loc}}
\numberwithin{equation}{section}
\begin{document}

\title{Sharp Convergence Rates for  Darcy's Law}

\author{
Zhongwei Shen\thanks{Supported in part by NSF grant DMS-1856235 and by Simons Fellowship.}}
\date{}
\maketitle

\begin{abstract}

This paper is concerned with Darcy's law for an incompressible viscous fluid flowing in a porous medium.
We establish the sharp $O(\sqrt{\e})$ convergence rate in a periodically perforated and
bounded domain in $\R^d$ for $d\ge 2$, where $\e$ represents the size of solid obstacles.
This is achieved by constructing two boundary layer correctors to  control the boundary layers created by the incompressibility 
condition and the discrepancy of boundary values between the solution and the leading term in its asymptotic expansion. 
One of the correctors deals with the tangential boundary data, while the other handles the normal boundary data.
 
\medskip

\noindent{\it Keywords}: Darcy's Law; Convergence Rate; Stokes Equations.

\medskip

\noindent {\it MR (2020) Subject Classification}: 35Q35;  35B27; 76D07.

\end{abstract}

%%%%%%%%%%%%%%%%%%%%%%%%%%%%%

\section{\bf Introduction}\label{section-1}

This paper is concerned with Darcy's law for an incompressible viscous fluid in a porous medium.
More precisely, we consider the Dirichlet problem for the steady Stokes equations,
\begin{equation}\label{DP}
\left\{
\aligned
-\e^2 \mu  \Delta u_\e + \nabla p_\e  & = f & \quad & \text{ in } \Omega_\e,\\
\text{\rm div} (u_\e) & =0 & \quad & \text{ in } \Omega_\e,\\
u_\e & = 0 & \quad & \text{ on } \partial \Omega_\e,
\endaligned
\right.
\end{equation}
where  $\mu>0$ is the viscosity constant, 
$0< \e< 1$,  and $\Omega_\e$  is  a periodically perforated and  bounded domain  in $\R^d$,   $d\ge 2$.
In \eqref{DP} we have  normalized the velocity vector by a factor $\e^2$,
where $\e$ is the period.
To describe the porous domain $\Omega_\e$, 
we let $Y=[0, 1]^d$ be a closed unit cube and  $Y_s $ (solid part) an open subset of $Y$ with Lipschitz boundary.
Throughout the paper we  shall assume that dist$(\partial Y, \partial Y_s)>0$ and that $Y_f =Y\setminus \overline{Y_s}$ (the fluid part)
is connected.
Let $\Omega$ be a bounded domain in $\R^d$ with Lipschitz boundary.
For $0< \e<1$, define
\begin{equation}\label{O-e}
\Omega_\e = \Omega \setminus \bigcup_k \e \left(\overline{Y_s} +z_k\right),
\end{equation}
where $z_k \in \mathbb{Z}^d$ and the union  is taken over those $k$'s for which $\e(Y+z_k) \subset \Omega$.

For $f\in L^2(\Omega; \R^d)$, let $(u_\e, p_\e)\in H_0^1(\Omega_\e; \R^d) \times L^2(\Omega_\e)$  be the  weak solution of \eqref{DP}
with $\int_{\Omega_\e} p_\e\, dx =0$.
We extend $u_\e$ to the whole domain $\Omega$ by zero and still denote the extension by $u_\e$.
Let $P_\e$ be the extension  of $p_\e$ to $\Omega$, defined by \eqref{P}.
It has been known since late 1970's  that as $\e \to 0$, $u_\e \to u_0$ weakly in $L^2(\Omega; \R^d)$ and
$P_\e \to p_0$ strongly in $L^2(\Omega)$, where $(u_0, p_0)$ is given by a Darcy law,
\begin{equation}\label{D-Law}
\left\{
\aligned
& u_0=  \mu^{-1} K (f-\nabla p_0) & \quad & \text{ in } \Omega,\\
 & \text{\rm div} ( u_0)  =0 & \quad & \text{ in } \Omega,\\
& u_0\cdot n  =0 & \quad &\text{ on } \partial\Omega,
\endaligned
\right.
\end{equation}
with $\int_\Omega p_0\, dx =0$.
In \eqref{D-Law}  the permeability matrix $K=(K_j^i)  $ is a $d\times d$ positive definite and symmetric  matrix defined  by \eqref{K}, and
$n$ denotes the outward unit normal to $\partial\Omega$.
Furthermore, it was observed  in \cite{Allaire-91a} by G. Allaire   that as $\e \to 0$,
\begin{equation}\label{D-Law-1}
 u_\e -\mu^{-1}  W(x/\e) ( f- \nabla p_0) \to 0 \quad \text{ strongly in } L^2(\Omega; \R^d),
\end{equation}
where $W(y)=(W_j^i (y) ) $ is an  1-periodic $d\times d$ matrix  defined by  the cell problem \eqref{W}
and $\fint_Y W(y)\, dy =K$.
For an excellent exposition on Darcy's law and closely  related topics, we refer the reader to \cite{Allaire-1997}
by G. Allaire and A. Mikeli\'{c}.

The purpose of this paper is to study the convergence rates for
$u_\e - \mu^{-1} W(x/\e) (f-\nabla p_0)$ and $P_\e -p_0$ in $L^2(\Omega)$.
The following is the main result of the paper.
The $O(\sqrt{\e})$  rate in \eqref{main-1} is sharp.

\begin{thm}\label{main-theorem-1}
Let $\Omega$ be a bounded $C^{2, \alpha}$ domain in $\R^d$, $d\ge 2$ for some $\alpha>0$.
Also assume that $Y_s$ is an open subset of $Y=[0, 1]^d$ with $C^{1, \alpha }$ boundary. 
Let $(u_\e, p_\e)\in H_0^1(\Omega_\e; \R^d) \times L^2(\Omega_\e)$ be a weak solution of \eqref{DP},
 where $f\in C^{1, 1/2}(\overline{\Omega}; \R^d)$ and $\int_{\Omega_\e} p_\e\, dx =0$.
Then 
\begin{equation}\label{main-1}
\aligned
& \|  u_\e - \mu^{-1}W (x/\e) (f -\nabla p_0)\|_{L^2(\Omega)}
+\| P_\e - p_0 \|_{L^2(\Omega)} \\
 & \qquad \qquad
 + \| \e  \nabla u_\e -\mu^{-1} \nabla W(x/\e)  (f-\nabla p_0)\|_{L^2(\Omega)}
  \le C \sqrt{\e}\,  \| f\|_{C^{1, 1/2} (\Omega)},
\endaligned
\end{equation}
where $C$ depends only on $d$, $\mu$,  $\Omega$, and $Y_s$.
\end{thm}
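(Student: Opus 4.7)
My plan is to compare $(u_\e, P_\e)$ with the two-scale ansatz
\[
v_\e(x) = \mu^{-1} W(x/\e)\bigl(f(x) - \nabla p_0(x)\bigr), \qquad q_\e(x) = p_0(x) + \e\, \pi(x/\e)\cdot\bigl(f(x) - \nabla p_0(x)\bigr),
\]
where $\pi$ denotes the cell pressure from the problem defining $W$. Substituting $(v_\e, q_\e)$ into \eqref{DP} and exploiting the cell equations, the dominant oscillatory terms cancel and the Stokes system for $(u_\e - v_\e,\, P_\e - q_\e)$ has an interior residual of order $\e$ in $L^2$. The genuine difficulty lies on $\partial\Omega$: although $v_\e$ vanishes on the interior obstacle boundary $\e\partial Y_s$ (because $W=0$ there), it is $O(1)$ on $\partial\Omega$; moreover $\mathrm{div}\,v_\e$ is not identically zero, only oscillatory with cell-average equal to $\mathrm{div}\,u_0 = 0$.

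The main step, and the main obstacle, is the construction of two boundary correctors that repair these defects while contributing only $O(\sqrt{\e})$ in $L^2$. I would split $v_\e|_{\partial\Omega}$ into tangential and normal parts. The tangential trace averages to $\mu^{-1}K(f-\nabla p_0)\cdot\tau$, an $O(1)$ macroscopic quantity since Darcy's law only enforces $u_0\cdot n = 0$; to cancel it I would construct $\Phi^{\tau}_\e$ from a Stokes boundary-layer problem in the semi-infinite perforated slab $\{y_d > 0\}\setminus\bigcup_k(\overline{Y_s}+z_k)$ whose trace on $\{y_d = 0\}$ matches the macroscopic tangential data, then, after decay away from the boundary, rescale by $\e$, cut off, and transport to a tubular neighborhood of $\partial\Omega$ using the $C^{2,\alpha}$ regularity of $\partial\Omega$. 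The normal trace is mean-zero in each tangential cell (since $u_0\cdot n = 0$), hence may be written as a tangential divergence of a periodic primitive, which lifts to a divergence-free corrector $\Phi^{n}_\e$ supported in an $O(\e)$-neighborhood of $\partial\Omega$. Both correctors will have $L^\infty$-norm $O(1)$ on a strip of width $O(\e)$, producing the sharp $L^2$-size $\sqrt{\e}$.

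With the corrected ansatz $\widetilde v_\e = v_\e + \Phi^{\tau}_\e + \Phi^{n}_\e$ matching $u_\e$ on $\partial\Omega_\e$ and divergence-free modulo a small remainder, I would set $w_\e = u_\e - \widetilde v_\e$ and test the resulting Stokes system against $w_\e$. The decisive input is the perforated-domain Poincar\'e inequality
\[
\|w_\e\|_{L^2(\Omega_\e)} \le C\e\,\|\nabla w_\e\|_{L^2(\Omega_\e)},
\]
whose $O(\e)$ constant converts the energy identity into the desired $\sqrt{\e}$ bound on $\|w_\e\|_{L^2}$ and $\e\|\nabla w_\e\|_{L^2}$. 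The pressure estimate is then obtained by duality: for $\varphi\in L^2(\Omega_\e)$ with zero mean, a Bogovskii-type restriction operator adapted to $\Omega_\e$ produces $\psi$ with $\mathrm{div}\,\psi = \varphi$ and $\|\nabla\psi\|_{L^2(\Omega_\e)} \le C\e^{-1}\|\varphi\|_{L^2}$; pairing against the momentum equation, the $\e^{-1}$ loss is exactly absorbed by the $\e$ already gained on $\e\nabla w_\e$, yielding $\|P_\e - q_\e\|_{L^2} \le C\sqrt{\e}$, and hence $\|P_\e - p_0\|_{L^2} \le C\sqrt{\e}$ since $q_\e - p_0$ is $O(\e)$ in $L^\infty$.

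The hardest technical ingredient will be the well-posedness and exponential decay of the tangential boundary-layer Stokes problem in the perforated half-space, together with careful matching of its outer profile to the macroscopic tangential trace of $v_\e$; the extension to a curved $C^{2,\alpha}$ boundary requires local flattening and quantitative control of the associated errors. Once the two correctors exist with the stated properties, the remainder is a fairly standard energy and duality argument, and the sharpness of the rate $\sqrt{\e}$ is precisely encoded in the $O(1)$ tangential defect of $v_\e$ on a strip of width $\e$.
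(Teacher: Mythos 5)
Your high-level picture is correct and matches the paper's: use the two-scale ansatz $v_\e = \mu^{-1}W(x/\e)(f-\nabla p_0)$, $q_\e = p_0 + \e\,\pi(x/\e)(f-\nabla p_0)$, split the boundary defect on $\partial\Omega$ into tangential and normal parts, construct two boundary correctors, and close with the perforated-domain Poincar\'e inequality and a Bogovskii-type duality argument for the pressure. You also correctly identify that the sharp $\sqrt{\e}$ comes from the $O(1)$ boundary defect on a strip of width $O(\e)$. However, the core construction you propose for the correctors is a genuinely different route from the paper's, and it contains a gap that is not likely to be repaired easily.

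The gap is in the tangential corrector. You propose to solve a Stokes boundary-layer cell problem in a semi-infinite perforated half-space, invoke its ``exponential decay,'' rescale by $\e$, cut off, and transport to a tubular neighborhood of $\partial\Omega$. First, the well-posedness and decay of a Stokes boundary-layer problem in such a domain are not established and are genuinely delicate; for the Stokes system the pressure introduces harmonic-type (not exponential) decay issues, and you acknowledge this is ``the hardest technical ingredient'' without addressing it. Second, and more structurally, the perforations are placed on the fixed lattice $\e\mathbb{Z}^d$ and only those cells $\e(Y+z_k)\subset\Omega$ are removed, so near the curved boundary $\partial\Omega$ there is a perforation-free fluid strip of width $\gtrsim\e$, and the offset between the lattice and the boundary varies continuously with position and with the normal direction. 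This is precisely the obstruction that makes half-space boundary-layer cell problems in elliptic homogenization notoriously difficult (quasi-periodicity, dependence on normal directions, ergodic averaging); simply ``transporting to a tubular neighborhood using $C^{2,\alpha}$ regularity'' does not resolve it. The paper sidesteps all of this by taking $\Psi_t$ to be the actual solution of the $\e^2$-scaled Stokes problem \emph{in $\Omega_\e$} with the tangential defect as boundary data, vanishing on $\Gamma_\e$, and then exploiting $\Psi_t\cdot n=0$ on $\partial\Omega$ to obtain $\e^2\|\nabla\Psi_t\|_{L^2(\Omega_\e)}^2 = \e^2\int_{\partial\Omega}\partial_n\Psi_t\cdot\Psi_t$, which combined with a localized Rellich estimate, $\|\nabla\Psi_t\|_{L^2(\partial\Omega)}\lesssim\|\nabla_{\tan}\Psi_t\|_{L^2(\partial\Omega)}+\e^{-1/2}\|\nabla\Psi_t\|_{L^2(\Omega_\e)}$, closes the $\sqrt{\e}$ bound directly. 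No half-space cell problem and no decay analysis are needed.

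For the normal corrector your intuition (the normal trace has fast-variable average $u_0\cdot n=0$ and hence can be written via a skew-symmetric periodic potential and integrated by parts tangentially, gaining a factor $\e$) is the same algebraic observation the paper uses via the tensor $\phi_{\ell j}^i$. But the paper does not build a compactly supported local corrector: it again takes $\Psi_n$ as the solution of the $\e^2$-Stokes system in $\Omega_\e$ with the normal defect as data, reduces to the unperforated Stokes problem in $\Omega$ by the energy estimate, and then estimates the latter via the Green function representation and the tangential integration by parts on $\partial\Omega$. If you insist on a compactly supported divergence-free lift of the normal data, you will need to verify that the residuals it introduces in the momentum equation are again $O(\sqrt{\e})$ in the right norms; this is not automatic. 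You also omit the interior divergence corrector needed to fix $\mathrm{div}\,v_\e\ne 0$ inside $\Omega_\e$ (the paper introduces a periodic cell corrector $\chi^j_{ik}$, smooths $f-\nabla p_0$ by an $\e$-scale mollifier $S_\e$, and cuts off near $\partial\Omega$); without this, the divergence residual is only $O(1)$ in $L^2$, not small.

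In summary: you have correctly reproduced the overall architecture and the origin of the $\sqrt{\e}$ rate, but your boundary-layer cell problem construction of the tangential corrector is a different and unverified route whose central analytic step (existence and decay of the Stokes boundary layer in a perforated half-space, and its transport to a curved boundary) is not established and is likely to fail or require substantial additional theory. The paper's route via Rellich estimates for the $\e^2$-Stokes problem in $\Omega_\e$, Green function bounds for Stokes in $\Omega$, and a periodic skew-symmetric potential is both more robust and avoids these obstacles entirely.
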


The first rigorous proof of Darcy's law by homogenization was given by L. Tartar in an appendix of \cite{Sanchez-1980}, using an energy method.
We refer the reader to \cite{Allaire-1997} for references on earlier work on the formal derivation of Darcy's law, using two-scale asymptotic expansions.
In \cite{Allaire-89, Allaire-91a}, the strong convergence of $(u_\e, P_\e)$ in $L^2(\Omega)$
 was established by the method of two-scale convergence.
Also see related work in \cite{LA-1990, Mik-1991, Mik-1994, Mik-1996, Mik-2000, Masmoudi-2004, Lu-2020}.

Regarding the rate of convergence for  $(u_\e, P_\e)$ in $L^2(\Omega)$,
 to the best of the author's knowledge, the only previous result  for a bounded domain with
 the Dirichlet condition was obtained by E. Maru\v{s}i\'{c}-Paloka  and  A. Mikeli\'{c}
in \cite{Mik-1996}, where a rate $O(\e^{1/6})$ was established for the case $d=2$.
See \cite{Mik-1994} for an earlier result for a unbounded domain $\Omega=(0, L) \times \mathbb{R}_+$.
We remark  that for Laplace's equation and systems of linear elasticity,  quantitative error estimates  have been established
in \cite{Lions-1980, Murat-1989, OSY-1992,  Jing-2020, Jing-2020-L}.  
As pointed out in \cite{Mik-1996}, the simple cut-off argument, which seems to work well for standard elliptic 
equations and systems, does not yield any convergence rate for the Stokes equations because of the  incompressibility condition.
In \cite{Mik-1996}, using a stream function from \cite{Temam},
 a boundary layer corrector was constructed in the case $d=2$ to control the boundary layer
near $\partial\Omega$ created by the incompressibility condition.
We mention that \cite{Mik-1996} also treated the case of nonlinear stationary Navier-Stokes equations. 

We now describe our approach to the problem of convergence rates and error estimates, which is based on  energy estimates.
Let
\begin{equation}\label{u-0}
u(x, x/\e) = \mu^{-1} W(x/\e) \big( f(x) -\nabla p_0(x)\big).
\end{equation}
To address the discrepancy of boundary values between $u_\e$ and $u(x, x/\e)$ as well as the incompressibility condition, we introduce two 
boundary layer correctors $(\Psi_t, q_t) $ and $(\Psi_n, q_n) $.
Let $\partial\Omega_\e =\partial\Omega \cup \Gamma_\e$.
The tangential boundary layer  corrector $(\Psi_t, q_t)$ is a weak solution of 
\begin{equation}\label{t-01}
\left\{
\aligned
-\e^2 \mu \Delta\Psi_t +\nabla q_t & =0 & \quad & \text{ in } \Omega_\e,\\
\text{\rm div}(\Psi_t) & =0& \quad & \text{ in }\ \Omega_\e,
\endaligned
\right.
\end{equation}
with boundary data $\Psi_t =0$ on $\Gamma_\e$, and
\begin{equation}\label{t-1a}
\Psi_t = - u(x, x/\e) + [ u(x, x/\e) \cdot n] n \quad \text{ on } \partial\Omega.
\end{equation}
Note that $\Psi_t \cdot n=0$ on $\partial\Omega$.  By the divergence theorem and the Cauchy inequality, this gives,
\begin{equation}\label{t-1b}
 \|\nabla \Psi_t \|^2_{L^2(\Omega_\e)} \le  \| \nabla \Psi_t \|_{L^2(\partial\Omega)} \| \Psi_t\|_{L^2(\partial\Omega)}.
\end{equation}
We  use a localized Rellich estimate in a Lipschitz domain to show that 
\begin{equation}\label{t-1c}
\|\nabla \Psi_t \|_{L^2(\partial\Omega)}
\le C\left\{  \|\nabla_{\tan} \Psi_t \|_{L^2(\partial\Omega)} 
+ \e^{-1/2} \|\nabla \Psi_t \|_{L^2(\Omega_\e)} \right\},
\end{equation}
where $\nabla_{\tan} \Psi_t$ denotes the tangential gradient of $\Psi_t$ on the boundary $\partial\Omega$.
The desired $O(\sqrt{\e})$ bound for $\e \|\nabla \Psi_t \|_{L^2(\Omega_\e)}$ follows  from \eqref{t-1b} and \eqref{t-1c}.
See Section \ref{section-t} for details.

The normal boundary layer corrector $(\Psi_n, q_n)$ is defined as the solution of  the Stokes equations \eqref{t-01} in $\Omega_\e$,
with the boundary conditions $\Psi_n =0$ on $\Gamma_\e$, and 
\begin{equation}\label{n-0}
\Psi_n =- \big[ u(x, x/\e) \cdot n -\gamma \big] n \quad \text{ on } \partial\Omega,
\end{equation}
where 
$$
\gamma =\fint_{\partial\Omega} u(x, x/\e) \cdot n\, d\sigma.
$$
Thanks to \eqref{D-Law}, we may write
\begin{equation}\label{n-0a}
u(x, x/\e)\cdot n = \mu^{-1} n_i\big[ W_j^i (x/\e) -K_j^i \big] \Big( f_j -\frac{\partial p_0}{\partial x_j} \Big)  \quad \text{ on } \partial\Omega
\end{equation}
(the repeated indices are summed from $1$ to $d$).
Furthermore, there exists a 1-periodic tensor $(\phi_{\ell  j}^i)$ such that
\begin{equation}\label{n-0b}
 \phi_{\ell j}^i =-\phi_{ij}^\ell \quad \text{  and } \quad 
W_j^i (y) -K_j^i =\frac{\partial}{\partial y_\ell} \phi_{\ell j}^i (y).
\end{equation}
It follows  from \eqref{n-0a} and \eqref{n-0b} that
\begin{equation}\label{n-0c}
u(x, x/\e)\cdot n
=\e (2\mu)^{-1}  \Big( n_i \frac{\partial}{\partial x_\ell} - n_\ell \frac{\partial}{\partial x_i} \Big) \Big( \phi_{\ell j}^i (x/\e) \Big)  
\cdot \Big( f_j -\frac{\partial p_0}{\partial x_j} \Big)  \quad \text{ on } \partial\Omega.
\end{equation}
Since $n_i \frac{\partial}{\partial x_\ell} - n_\ell \frac{\partial}{\partial x_i}$ is a tangential derivative, 
the formula \eqref{n-0c} allows us  to use an integration by parts on $\partial\Omega$ (see \eqref{IP}), which generates the needed decay factor ${\e}$.
In order to carry out this argument, we use an energy estimate to reduce the problem to the $L^2$ estimate for the Stokes equations
in $\Omega$, whose solutions are then represented by integrals on $\partial\Omega$, using the Poisson kernels.
See Section \ref{section-n} for details.

We point out that  the $C^{1, \alpha}$ condition on $Y_s$ in Theorem \ref{main-theorem-1} is used to ensure the boundedness of $\nabla W$, 
 while the $C^{2, \alpha}$ condition on $\Omega$ is used for the $C^2$ estimates for the Stokes equations in $\Omega$.
 The $C^{1, 1/2}$ condition on $f$ seems to be more or less optimal for the methods used.
 An $O(\sqrt{\e})$ estimate with less regularity on $f$  would be an interesting and challenging problem.

The paper is organized as follows.
In Section \ref{section-p} we introduce some notations and collect several known results  that will be used in later sections.
In Section \ref{section-E} we establish an energy estimate for the Stokes equations in $\Omega_\e$.
The tangential boundary layer corrector $(\Psi_t, q_t)$ is constructed in Section \ref{section-t}, 
while the normal boundary layer  corrector $(\Psi_n, q_n)$ and its estimates are given  in Section \ref{section-n}.
The proof of Theorem \ref{main-theorem-1} is contained  in Section \ref{section-C}, where an interior corrector is constructed.
In fact, a more general case is treated in Section \ref{section-C}, where we assume $u_\e=b \in H^1(\partial\Omega; \R^d)$
on $\partial\Omega$. See Theorem \ref{theorem-C}.
Due to the discrepancy of $u_\e$ and $u(x, x/\e)$ on $\partial\Omega$, 
 the $O(\sqrt{\e}) $ rate  in Theorem \ref{main-theorem-1} is sharp.
 See Remark \ref{last-r}
 
Throughout the paper, the repeated indices are summed from $1$ to $d$.
We will use $C$ and $c$ to denote positive constants that depend at most on $d$, $\mu$,
$\Omega$, and $Y_s$. Since the value of $\mu$ is not relevant in this study,
we  will assume $\mu=1$ in the rest of the paper for simplicity.

\medskip

\noindent 
{\bf Acknowledgement.}
 The author thanks Jinping Zhuge for several valuable comments.
 The author is also grateful for the valuable suggestions and corrections made by the anonymous referees.

%%%%%%%%%%%%%%%%%%%%%

\section{Preliminaries}\label{section-p}

Let $Y=[0, 1]^d$ and $Y_s$ (solid part)  be an open subset of $Y$ with Lipschitz boundary.
We assume that dist$(\partial Y, \partial Y_s)>0$ and that (the fluid part) 
$Y_f= Y \setminus \overline{Y_s}$ is connected.
Let
\begin{equation}
\omega =\bigcup_{z\in \mathbb{Z}^d}  \big( Y_f +z\big)
\end{equation}
be the periodic repetition of $Y_f$. It is easy to see that the unbounded domain $\omega$ is connected, 1-periodic,  and that 
$\partial\omega$ is locally Lipschitz.

For $1\le  j \le d$, let $ (W_j (y), \pi_j (y))   = (W_j^1(y), \dots, W_j^d (y), \pi_j (y))\in H^1_{\loc}(\omega; \R^d) \times L^2_{\loc}(\omega)$ be the 1-periodic  solution of
\begin{equation}\label{W}
\left\{
\aligned
-\Delta  W_j  +\nabla \pi_j  & =e_j & \quad & \text{ in } \omega , \\
\text{\rm div}  (W_j) & =0 & \quad & \text{ in }\omega,\\
W_j & =0& \quad & \text{ on } \partial \omega,\\
\endaligned
\right.
\end{equation}
with $\int_{Y_f} \pi_j \, dy=0$, where $e_j=(0, \dots, 1, \dots , 0)$ with $1$  in the $j^{th}$ place.
We extend $W_j$ to $\R^d$ by  zero and define \begin{equation}\label{K}
K_j^i =\int_{Y}  W^i_j (y)\, dy.
\end{equation}
Using
$$
K_j^i =\int_Y \nabla W^\ell_j\cdot \nabla W^\ell_i\, dy,
$$
it  is not hard to show that the $d\times d$ constant matrix $(K_j^i)$  is symmetric and positive definite.

Thanks to the assumption dist$(\partial Y, \partial Y_s)>0$, we have 
$\partial\Omega_\e =\partial \Omega \cup \Gamma_\e$ and 
dist$(\partial\Omega, \Gamma_\e) \ge c \e$, where
\begin{equation}\label{Gamma}
\Gamma_\e = \Omega \cap\partial\Omega_\e\subset  \partial (\e \omega).
\end{equation}
For $f\in L^2(\Omega; \R^d)$, let $(u_\e, p_\e)$ be a weak solution
in $H_0^1(\Omega_\e; \R^d) \times L^2(\Omega_\e)$ of the Dirichlet problem,
\begin{equation}\label{DP-1}
\left\{
\aligned
-\e^2 \Delta u_\e + \nabla p_\e  & = f & \quad & \text{ in } \Omega_\e,\\
\text{\rm div} (u_\e) & =0 & \quad & \text{ in } \Omega_\e,\\
u_\e & =0 & \quad &  \text{ on } \partial \Omega_\e,
\endaligned
\right.
\end{equation}
with $\int_{\Omega_\e} p_\e\, dx=0$.
We extend  $u_\e$ to $\Omega$ by zero and still denote the extension by $u_\e$.
Let  $P_\e$ be the  extension of $p_\e$, defined by
\begin{equation}\label{P}
P_\e (x) =
\left\{ 
\aligned
& p_\e (x)  & \quad & \text{ if } x\in \Omega_\e, \\
& \fint_{\e( Y_f +z_k)} p_\e & \quad & \text{ if } x\in \e (Y_s + z_k)  \text{ and }  \e (Y+z_k) \subset \Omega \text{ for some }
z_k \in \mathbb{Z}^d
\endaligned
\right.
\end{equation}
(see \cite{LA-1990}).

\begin{thm}\label{homo-thm}
Let $\Omega$ be a bounded Lipschitz domain in $\R^d$, $d\ge 2$.
Let $p_0\in H^1(\Omega)$ be the  weak solution of the Neumann problem,
\begin{equation}\label{homo-2}
\left\{
\aligned
\frac{\partial}{\partial x_i} K_j^i \Big (f_j -\frac{\partial p_0}{\partial x_j}\Big) & =0 & \quad  & \text{ in } \Omega,\\
n_i K_j^i \Big(f_j -\frac{\partial p_0}{\partial x_j} \Big) & =0 & \quad & \text{ on } \partial\Omega,
\endaligned
\right.
\end{equation}
with $\int_{\Omega} p_0\, dx=0$,
where $n=(n_1, \dots, n_d)$ denotes the outward unit normal to $\partial\Omega$.
Then, as  $\e \to 0$, 
\begin{equation}\label{homo-1}
\left\{
\aligned
{u}_\e - W_j (x/\e) \Big ( f_j - \frac{\partial p_0}{\partial x_j}\Big) & \to 0 & \quad & \text{ in }L^2(\Omega; \R^d), \\ 
P_\e  -p_0 & \to 0 & \quad & \text{ in } L^2(\Omega).
\endaligned
\right.
\end{equation}
  \end{thm}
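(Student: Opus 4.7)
The plan is to follow Tartar's energy method: uniform a priori estimates, identification of the weak limit via oscillating test functions built from the cell problem \eqref{W}, and a corrector-type argument to upgrade to strong convergence.

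First, I would derive uniform bounds. Testing \eqref{DP-1} with $u_\e$ and applying the Poincar\'e inequality on $\Omega_\e$ (whose constant is of order $\e$ since $u_\e$ vanishes on the $\e$-periodic hole boundaries $\Gamma_\e$) yields $\|u_\e\|_{L^2(\Omega)}+\e\|\nabla u_\e\|_{L^2(\Omega_\e)}\le C\|f\|_{L^2(\Omega)}$. For the pressure, the Tartar--Allaire restriction operator $R_\e\colon H^1_0(\Omega;\R^d)\to H^1_0(\Omega_\e;\R^d)$ (which preserves divergence modulo cell averages and satisfies $\|\nabla R_\e\phi\|_{L^2}\le C(\e^{-1}\|\phi\|_{L^2}+\|\nabla\phi\|_{L^2})$) dualizes the weak formulation to give $\|P_\e\|_{L^2(\Omega)/\R}\le C\|f\|_{L^2(\Omega)}$. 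Weak $L^2$ compactness then yields subsequential limits $u_\e\rightharpoonup u^*$ and $P_\e\rightharpoonup p^*$.

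Next I would identify the limits. Set $v_\e^j(x):=W_j(x/\e)$; rescaling \eqref{W} shows that $v_\e^j$ solves $-\e^2\Delta v_\e^j+\nabla(\e\pi_j(x/\e))=e_j$, $\text{div}(v_\e^j)=0$ in $\Omega_\e$, with $v_\e^j=0$ on $\Gamma_\e$. For $\varphi\in C_c^\infty(\Omega)$, testing \eqref{DP-1} with $v_\e^j\varphi$ and the cell equation with $u_\e\varphi$ and subtracting, the symmetric $\e^2\nabla u_\e:\nabla v_\e^j\,\varphi$ bulk terms cancel and the divergence-free conditions reduce the pressure pairings, leaving
\begin{equation*}
\int_{\Omega_\e}(f\cdot v_\e^j-u_\e^j)\varphi\,dx+\int_{\Omega_\e}\big[P_\e v_\e^j-\e\pi_j(x/\e)u_\e\big]\cdot\nabla\varphi\,dx=o(1).
\end{equation*}
By periodicity $v_\e^j\rightharpoonup(K_j^1,\ldots,K_j^d)$ in $L^2$ and $\e\pi_j(x/\e)u_\e\to 0$ in $L^2$. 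For the product $P_\e v_\e^j$ I would use two-scale convergence: since $\e\nabla P_\e$ is small in a negative Sobolev norm (from $\nabla p_\e=f+\e^2\Delta u_\e$), the two-scale limit of $P_\e$ is $y$-independent and equals $p^*$, giving $\int P_\e v_\e^j\cdot\nabla\varphi\to K_j^i\int p^*\partial_i\varphi$. Passing to the limit yields $u^{*,j}=K_j^i(f_i-\partial_i p^*)$ distributionally in $\Omega$; combined with $\text{div}(u^*)=0$ in $\Omega$ and $u^*\cdot n=0$ on $\partial\Omega$ (both inherited from the zero extension of $u_\e$), the pair $(u^*,p^*)$ solves Darcy's law, and uniqueness of the Neumann problem \eqref{homo-2} forces $(u^*,p^*)=(u_0,p_0)$, so the full sequence converges weakly.

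Finally I would upgrade to strong convergence. The $L^2$ bound on $P_\e$ combined with an $H^{-1}$ bound on $\nabla P_\e$ (inherited from the equation) gives Rellich compactness, hence $P_\e\to p_0$ strongly in $L^2(\Omega)$. For the velocity, the energy identity passes to the limit as
\begin{equation*}
\e^2\|\nabla u_\e\|_{L^2(\Omega_\e)}^2=\int_{\Omega_\e}f\cdot u_\e\,dx\longrightarrow\int_\Omega K_j^i(f_i-\partial_i p_0)(f_j-\partial_j p_0)\,dx,
\end{equation*}
matching the squared two-scale $L^2$-norm of $\nabla_y[W_j(y)(f_j(x)-\partial_j p_0(x))]$; combined with the cell-problem Poincar\'e inequality on $Y_f$, this yields strong two-scale convergence of $u_\e$ to $W_j(y)(f_j-\partial_j p_0)$, and a standard corrector lemma then gives $u_\e-W_j(x/\e)(f_j-\partial_j p_0)\to 0$ in $L^2(\Omega;\R^d)$. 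The main obstacle is the pressure estimate: the divergence operator $H^1_0(\Omega_\e;\R^d)\to L^2_0(\Omega_\e)$ is not uniformly surjective in the perforated geometry, and the construction of the Tartar--Allaire restriction operator, with careful treatment of partial cells near $\partial\Omega$, is the technical heart of the argument.
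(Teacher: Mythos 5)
The paper does not prove Theorem~\ref{homo-thm}: it is stated as background, explicitly noted as not used in the rest of the paper, and attributed to the two-scale-convergence proofs of Allaire \cite{Allaire-89, Allaire-91a}. There is therefore no internal proof to compare against. Your outline (energy bounds via the restriction operator $R_\e$, Tartar's oscillating test functions built from \eqref{W}, energy convergence plus a corrector lemma for strong $L^2$ convergence of the velocity) follows the standard route, but the pressure step has a genuine gap.

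Your assertion that the two-scale limit of $P_\e$ is $y$-independent ``since $\e\nabla P_\e$ is small in a negative Sobolev norm'' does not hold up. The fact that $\e\nabla P_\e\to 0$ in $H^{-1}(\Omega)$ is automatic for any $L^2$-bounded family, since $\langle\e\nabla P_\e,\phi\rangle=-\e\int_\Omega P_\e\,\text{\rm div}\,\phi$, and it carries no information about the $y$-dependence of a two-scale limit: $u_\e(x)=\sin(2\pi x_1/\e)$ is $L^2$-bounded, two-scale converges to $\sin(2\pi y_1)$, and yet $\e\nabla u_\e\to 0$ in $H^{-1}(\Omega)$. The criterion that actually forces $y$-independence requires $\e\nabla P_\e$ to be bounded in $L^2(\Omega)$ and tend to zero there \emph{strongly}, which is unavailable: the extension in \eqref{P} is in general discontinuous across each interface $\partial(\e(Y_s+z_k))$, so $P_\e\notin H^1(\Omega)$, and the relation $\nabla p_\e=f+\e^2\Delta u_\e$ lives inside $\Omega_\e$ and says nothing about the distributional gradient of $P_\e$ on all of $\Omega$. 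The same confusion affects your ``Rellich compactness'' step: the $L^2$ bound on $P_\e$ together with the automatic $H^{-1}$ bound on $\nabla P_\e$ is only boundedness, not compactness. What one needs is \emph{precompactness} of $\{\nabla P_\e\}$ in $H^{-1}(\Omega)$ together with the Ne\v{c}as inequality $\|p\|_{L^2(\Omega)/\R}\le C\|\nabla p\|_{H^{-1}(\Omega)}$, and the precompactness is exactly where the duality with $R_\e$ enters in an essential way: by the constructions in \eqref{P} and \eqref{Res} one has $\int_\Omega P_\e\,\text{\rm div}\,\phi=\int_{\Omega_\e}p_\e\,\text{\rm div}(R_\e\phi)$ for $\phi\in H_0^1(\Omega;\R^d)$, and substituting the Stokes equation together with \eqref{R-2}--\eqref{R-3} shows this pairing is Cauchy in $\e$ uniformly over the unit ball of $H_0^1(\Omega;\R^d)$. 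Once $P_\e\to p_0$ strongly in $L^2(\Omega)$ is established this way, the product $\int P_\e v_\e^j\cdot\nabla\varphi$ passes to the limit by weak--strong pairing and no two-scale analysis of the pressure is needed; this is also the logically cleaner order, since your identification step otherwise invokes a property of $P_\e$ that your sketch only attempts to prove afterwards.
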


As indicated in Introduction, a proof of Theorem \ref{homo-thm}, using the method of two-scale convergence,
may be found in \cite{Allaire-89, Allaire-91a}.
We do not use the theorem in this paper.
However, we will need several other  known results stated below.

\begin{lemma}\label{P-lemma}
Let $\Omega$ be a bounded Lipschitz domain in $\R^d$, $d\ge 2$.
Assume that $\Gamma_\e \neq \emptyset$.
Let $u \in H^1(\Omega_\e)$ with  $u=0$ on $\Gamma_\e$. Then
\begin{equation}\label{PI}
\| u \|_{L^2(\Omega_\e)} \le C\e  \| \nabla u \|_{L^2(\Omega_\e)}.
\end{equation}
\end{lemma}

\begin{proof}
The case $u\in H_0^1(\Omega_\e)$
 is more or less well known.
See e.g. \cite{CPS-2007}.
The proof for the case $u\in H^1(\Omega_\e)$ with $u_\e=0$ on $\Gamma_\e$ is the same.
 We sketch a proof here  for the reader's convenience.
Suppose $\e(Y+z_k)\subset\Omega$ for some $z_k\in \mathbb{Z}^d$.
Since $u=0$ on $\Gamma_\e$, it follows by Poincar\'e's inequality that
\begin{equation}\label{PI-1}
\int_{\e (Y_f +z_k)} |u|^2\, dx \le C \e^2 \int_{\e (Y_f +z_k)} |\nabla u|^2\, dx.
\end{equation}
Similarly, 
\begin{equation}\label{PI-2}
\int_{B(x_0, C \e)\cap \Omega_\e} |u|^2\, dx 
\le C \e^2 \int_{B(x_0, C\e)\cap \Omega_\e} |\nabla u|^2\, dx,
\end{equation}
if  $x_0\in \partial\Omega$ and $\e (Y+ z) \subset B(x_0, C\e) \cap \Omega$ for some $z\in \mathbb{Z}^d$.
The estimate \eqref{PI} follows from \eqref{PI-1} -\eqref{PI-2} by a covering argument.
\end{proof}

 \begin{lemma}\label{lemma-R}
 Let $\Omega$ be a bounded Lipschitz domain in $\R^d$, $d\ge 2$.
 There exists a bounded linear operator
 \begin{equation}\label{R}
 R_\e: H^1(\Omega; \R^d) \to H^1(\Omega_\e, \R^d),
 \end{equation}
 such that
 \begin{equation}\label{R-1}
 \left\{
 \aligned
 & R_\e (u)=0  \quad \text{ on } \Gamma_\e \quad \text{ and } \quad R_\e(u)=u \quad \text{ on } \partial\Omega,\\
 & R_\e (u)\in H^1_0(\Omega_\e; \R^d) \quad \text{ if } u\in H^1_0(\Omega; \R^d),\\
 &R_\e (u) =u \quad \text{ in } \Omega_\e\  \text{ if } \ u=0 \text{ on } \Gamma_\e,\\
 & \text{\rm div}(R_\e (u) ) =0 \quad \text{ in } \Omega_\e\quad  \text{ if } \ \text{\rm div}(u) =0
  \text{ in }\Omega, 
 \endaligned
 \right.
 \end{equation}
 and
 \begin{equation}\label{R-2}
 \e\, \| \nabla R_\e (u)\|_{L^2(\Omega_\e)}
 + \| R_\e (u)\|_{L^2(\Omega_\e)}
 \le C \Big\{
 \e \| \nabla u\|_{L^2(\Omega)} + \| u\|_{L^2(\Omega)} \Big\},
 \end{equation}
 where $C$ depends only on $\Omega$ and $Y_s$.
 Moreover, 
 \begin{equation}\label{R-3}
 \|\text{\rm div} (R_\e (u) ) \|_{L^2(\Omega_\e)}
 \le C\,  \| \text{\rm div} (u)\|_{L^2(\Omega)}.
 \end{equation}
 \end{lemma}
 
 \begin{proof}
 The proof is the similar to  that of a lemma due to Tartar (in an appendix of \cite{Sanchez-1980}, also see Lemma 1.7 in \cite{Allaire-1997}).
 Let $u\in H^1(\Omega; \R^d)$.
 For each $\e (Y+z) \subset \Omega$, where $z\in \mathbb{Z}^d$,  we define $R_\e(u)$ on $\e (Y_f+z)$ by the Dirichlet problem, 
 \begin{equation}\label{Res}
 \left\{
 \aligned
 -\e^2 \Delta R_\e (u) +\nabla q & =-\e^2 \Delta u & \quad & \text{ in } \e (Y_f +z),\\
 \text{\rm div} (R_\e (u)) & = \text{\rm div} (u)
 + \frac{1}{|\e (Y_f + z)|}
 \int_{\e (Y_s +z)} \text{\rm div} (u)\, dx &\quad & \text{ in } \e (Y_f +z),\\
 R_\e (u) & =0& \quad & \text{ on } \partial ( \e (Y_s +z)),\\
 R_\e (u) & = u & \quad & \text{ on } \partial (\e (Y+z)).
 \endaligned
 \right.
 \end{equation}
 If $x\in \Omega_\e$ and $x  \notin \e (Y_f +z) $ for any $\e(Y+z)\subset \Omega$,
 we let $R_\e (u)(x) =u(x)$.
 It is not hard to show that $R_\e(u)\in H^1(\Omega_\e; \R^d)$ satisfies the conditions in  \eqref{R-1}-\eqref{R-3}.
 \end{proof}
 
 \begin{lemma}\label{div-lemma}
 Let $\Omega$ be a bounded Lipschitz domain in $\R^d$, $d\ge 2$.
 Suppose that  $g \in L^2(\Omega_\e)$ and $\int_{\Omega_\e}g\, dx =0$. Then there exists $v_\e\in H_0^1(\Omega_\e; \R^d)$ such that
$
\text{\rm div} (v_\e) = g  \text{ in } \Omega_\e
$
and
\begin{equation}\label{div-1}
 \e \| \nabla v_\e \|_{L^2(\Omega_\e)} + \| v_\e\|_{L^2(\Omega_\e)}
 \le C \| g\|_{L^2(\Omega_\e)},
 \end{equation}
 where $C$ depends only on $\Omega$ and $Y_s$.
 \end{lemma}
 
 \begin{proof}
 See e.g. \cite[pp.146-148]{CPS-2007}.
 \end{proof}
 
 We end this section with some observations on the rescaled solutions  
 $(W_j (x/\e), \e \pi_j (x/\e))$ in $\Omega_\e$.  It follows from  \eqref{W} by rescaling that
 \begin{equation}\label{w-n-0}
 \left\{
 \aligned
 -\e^2 \Delta \big\{ W_j(x/\e) \big\} + \nabla \big\{ \e \pi_j(x/\e)  \big\}  & = e_j & \quad & \text{ in } \e \omega,\\
 \text{\rm div} (W_j (x/\e)) & =0 & \quad & \text{ in } \e \omega,\\
 W_j (x/\e) & =0 & \quad & \text{ on } \partial(\e\omega).
 \endaligned
 \right.
 \end{equation}
 We extend both $W_j$ and $\pi_j$ to $\R^d$ by zero.
Clearly, 
 \begin{equation}\label{W-n-1}
 \left\{
 \aligned
 \text{\rm div} (W_j (x/\e)) & =0 & \quad & \text{ in } \R^d,\\
 W_j (x/\e) & =0 & \quad & \text{ on } \Gamma_\e =\Omega \cap \partial\Omega_\e.
 \endaligned
 \right.
\end{equation}
Note that in the construction of $\Omega_\e$, the holes near $\partial\Omega$ are not removed.
As a result, the first equation in \eqref{w-n-0} needs to be modified for $\Omega_\e$. In fact,
a computation using integration by parts  shows that
\begin{equation}\label{W-n-2}
-\e^2 \Delta \big\{ W_j (x/\e) \big\}
+ \nabla\big\{ \e \pi_j (x/\e)  \big\}=e_j+ \sigma_{\e, j} \quad \text{ in } \Omega_\e,
\end{equation}
where $\sigma_{\e, j}\in H^{-1}(\Omega_\e; \R^d)$ is given by
\begin{equation}\label{W-n-3}
\aligned
& \langle \sigma_{\e, j}, \psi \rangle_{H^{-1}(\Omega_\e) \times H^1_0(\Omega_\e)} \\
& =\sum_k  \left\{ -\int_{\Omega \cap \e (Y_s+z_k)} e_j \cdot \psi\, dx
-\e  \int_{\overline{\Omega} \cap \partial (\e (Y_s+z_k))}
 (\nabla W_j (x/\e) n - \pi_j (x/\e) n) \cdot \psi  d\sigma \right\}.
\endaligned
\end{equation}
The sum in \eqref{W-n-3} is taken over those $k$'s for which $z_k \in \mathbb{Z}^d$
and $\e (Y + z_k) \cap \partial\Omega \neq \emptyset$,
and $n$ denotes the outward unit normal.
Under the assumption that $\partial Y_s$ is $C^{1, \alpha}$,
it is known that $|\nabla W_j|$ and $\pi_j$ are bounded in $\R^d$.
It follows that if $g \in H^1_{loc} (\R^d)$ and $\psi \in H_0^1(\Omega_\e; \R^d)$, then
$$
\aligned
&| \langle \sigma_{\e, j}, g \psi \rangle_{H^{-1}(\Omega_\e) \times H^1_0(\Omega_\e)} |
\le C 
 \sum_k \left\{ \int_{\e (Y+z_k)} |g \psi|\, dx
+  \e  \int_{\partial (\e (Y_s +z_k))} |g \psi|\, d\sigma \right\}
\endaligned
$$
$(\psi$ is extended to $\R^d$ by zero).
Using the inequality
$$
\int_{\partial (\e(Y_s +z_k))}
|u|^2\, d\sigma
\le C \e \int_{\e (Y+z_k)} |\nabla u |^2\, dx
+C \e^{-1} \int_{\e (Y+z_k)} | u|^2\, dx,
$$
 \eqref{PI} and  the Cauchy  inequality,
  one may prove that
  \begin{equation}\label{W-n-4}
  | \langle \sigma_{\e, j}, g \psi \rangle_{H^{-1}(\Omega_\e) \times H^1_0(\Omega_\e)} |
\le C \e \big\{ \e \| \nabla g \|_{L^2(\Sigma_{c\e} )} + \| g \|_{L^2(\Sigma_{c\e})} \big\}  \| \nabla \psi \|_{L^2(\Omega_\e)}
  \end{equation}
  for any $\psi \in H_0^1(\Omega_\e; \R^d)$, where $\Sigma_{c \e} = \{ x\in \R^d: \text{\rm dist}(x, \partial\Omega)< c\e \}$.
  
 %%%%%%%%%%%%%
 
 \section{Energy estimates}\label{section-E}
 
In this section we establish  the energy estimates for the Dirichlet problem, 
\begin{equation}\label{DP-E}
\left\{
\aligned
-\e^2 \Delta u_\e + \nabla p_\e  & = f +  \e\,  \text{\rm div} (F) & \quad & \text{ in } \Omega_\e,\\
\text{\rm div}(u_\e) & =g & \quad & \text{ in } \Omega_\e,\\
u_\e & =0 & \quad & \text{ on } \Gamma_\e, \\
u_\e & =h & \quad & \text{ on } \partial\Omega,
\endaligned
\right.
\end{equation}
where $(g, h)$ satisfies the compatibility condition,
\begin{equation}\label{c-p}
\int_\Omega g\, dx =\int_{\partial\Omega} h \cdot n\, d\sigma.
\end{equation}

\begin{thm}\label{theorem-E}
Let $\Omega$ be a bounded  domain in $\R^d$, $d\ge 2$ with Lipschitz boundary.
Let $(u_\e, p_\e) \in H^1(\Omega_\e; \R^d) \times L^2(\Omega_\e)$ be a weak solution of
\eqref{DP-E} with $\int_{\Omega_\e} p_\e\, dx=0$.
Then
\begin{equation}\label{E-1}
\aligned
 & \e \|\nabla u_\e \|_{L^2(\Omega_\e )} 
+ \| u_\e \|_{L^2(\Omega_\e)}
+ \| p_\e \|_{L^2(\Omega_\e)}\\
&
\le C \left\{ \| f\|_{L^2(\Omega_\e)}
+ \| F\|_{L^2(\Omega_\e)}
+ \| g\|_{L^2(\Omega_\e)}
+ \| h\|_{L^2(\partial\Omega)}
+ \e \| h\|_{H^{1/2}(\partial\Omega)}
\right\},
\endaligned
\end{equation}
for any $0< \e< 1$,
where $C$ depends only on $\Omega$ and $Y_s$.
\end{thm}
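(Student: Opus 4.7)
The plan is to reduce \eqref{DP-E} to a problem with homogeneous boundary data and divergence-free solution by subtracting an appropriate lift from $u_\e$, then apply a standard energy argument together with the perforated-domain Poincar\'e inequality of Theorem \ref{P-theorem}. The pressure is controlled by a Bogovskii-type duality argument using Theorem \ref{div-thm}.

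\emph{Construction of the lift.} First I would extend $h$ to a vector field $H \in H^1(\Omega;\R^d)$ satisfying $\|H\|_{L^2(\Omega)} \le C\|h\|_{L^2(\partial\Omega)}$, $\|\nabla H\|_{L^2(\Omega)} \le C\|h\|_{H^{1/2}(\partial\Omega)}$, and $\|\mathrm{div}(H)\|_{L^2(\Omega)} \le C\|h\|_{L^2(\partial\Omega)}$. Such an $H$ can be produced, for example, by solving the Stokes system in $\Omega$ with Dirichlet data $h$ and constant divergence $|\Omega|^{-1}\int_{\partial\Omega}h\cdot n\,d\sigma$, using the $L^2$-boundary-value theory for Stokes on Lipschitz domains. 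Setting $\phi_\e := R_\e(H)$ with $R_\e$ from Theorem \ref{theorem-R} yields $\phi_\e = h$ on $\partial\Omega$, $\phi_\e = 0$ on $\Gamma_\e$, the key bound $\e\|\nabla\phi_\e\|_{L^2(\Omega_\e)} + \|\phi_\e\|_{L^2(\Omega_\e)} \le C\bigl(\|h\|_{L^2(\partial\Omega)} + \e\|h\|_{H^{1/2}(\partial\Omega)}\bigr)$, and $\|\mathrm{div}(\phi_\e)\|_{L^2(\Omega_\e)} \le C\|h\|_{L^2(\partial\Omega)}$. The compatibility condition \eqref{c-p} ensures $\int_{\Omega_\e}(g - \mathrm{div}(\phi_\e))\,dx = 0$, so Theorem \ref{div-thm} produces $v_\e \in H_0^1(\Omega_\e;\R^d)$ with $\mathrm{div}(v_\e) = g - \mathrm{div}(\phi_\e)$ and $\|v_\e\|_{L^2} + \e\|\nabla v_\e\|_{L^2} \le C\bigl(\|g\|_{L^2} + \|h\|_{L^2(\partial\Omega)}\bigr)$.

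\emph{Energy estimate and pressure.} Let $w_\e := u_\e - \phi_\e - v_\e \in H_0^1(\Omega_\e;\R^d)$, which is divergence-free. Testing the weak form of \eqref{DP-E} against $w_\e$ makes the pressure term vanish; Cauchy--Schwarz together with the Poincar\'e inequality $\|w_\e\|_{L^2} \le C\e\|\nabla w_\e\|_{L^2}$ from Theorem \ref{P-theorem} then yield $\e\|\nabla w_\e\|_{L^2(\Omega_\e)} \le C\bigl(\|f\|_{L^2} + \|F\|_{L^2} + \e\|\nabla\phi_\e\|_{L^2} + \e\|\nabla v_\e\|_{L^2}\bigr)$, and the bound on $\e\|\nabla u_\e\|_{L^2} + \|u_\e\|_{L^2}$ follows by the triangle inequality. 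For the pressure, I would apply Theorem \ref{div-thm} to the zero-mean function $p_\e$ to obtain $\psi \in H_0^1(\Omega_\e;\R^d)$ with $\mathrm{div}(\psi) = p_\e$ and $\|\psi\|_{L^2} + \e\|\nabla\psi\|_{L^2} \le C\|p_\e\|_{L^2}$; using $\psi$ as a test function then gives $\|p_\e\|^2_{L^2} = \int_{\Omega_\e} p_\e\,\mathrm{div}(\psi)\,dx \le C\bigl(\|f\|_{L^2} + \|F\|_{L^2} + \e\|\nabla u_\e\|_{L^2}\bigr)\|p_\e\|_{L^2}$, completing the estimate.

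The principal technical point is the construction of $H$: the two $L^2(\partial\Omega)$-type bounds on $H$ and $\mathrm{div}(H)$ produce the sharp coefficient $\|h\|_{L^2(\partial\Omega)}$ on the right-hand side, rather than the cruder $\|h\|_{H^{1/2}(\partial\Omega)}$ that would come from a naive $H^{1/2}\to H^1$ trace extension; the $\e$-factor on $\|h\|_{H^{1/2}(\partial\Omega)}$ then arises from the gradient bound on $H$ combined with Theorem \ref{theorem-R}.
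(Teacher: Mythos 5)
Your proof is correct, and at the level of the key ingredients it is the same as the paper's: you solve the Stokes system in the full domain $\Omega$ with Dirichlet data $h$ and constant divergence $\gamma$ to get a lift $H$, you use the nontangential-maximal-function estimate from \cite{FKV} to get the sharp bound $\|H\|_{L^2(\Omega)}\le C\|h\|_{L^2(\partial\Omega)}$ alongside $\|\nabla H\|_{L^2(\Omega)}\le C\|h\|_{H^{1/2}(\partial\Omega)}$, and you push this through $R_\e$ from Theorem~\ref{theorem-R}. The organizational difference is real but minor. The paper proceeds in stages: it first establishes the pressure bound $\|p_\e\|\le C(\e\|\nabla u_\e\|+\|f\|+\|F\|)$ via Theorem~\ref{div-thm}, then treats $h=0$ by testing against $u_\e$ itself, where the cross term $\int p_\e\, g$ is absorbed using that pressure bound and Cauchy's inequality, and finally reduces the general case to $h=0$ by subtracting $R_\e(H)$. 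You instead also subtract the Bogovskii-type field $v_\e$ from Theorem~\ref{div-thm} so that your test function $w_\e=u_\e-\phi_\e-v_\e$ is divergence-free and lies in $H^1_0(\Omega_\e;\R^d)$; this kills the pressure term outright and decouples the energy estimate from the pressure estimate, at the cost of one extra application of Theorem~\ref{div-thm} and of checking that the compatibility condition \eqref{c-p} makes $g-\mathrm{div}(\phi_\e)$ mean-zero on $\Omega_\e$ (which you do). Both routes are clean; yours avoids the slightly circular-looking use of the pressure estimate inside the velocity estimate, while the paper's staging keeps each step elementary.
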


\begin{proof}
We divide the proof into several steps.

\noindent Step 1. 
By Lemma \ref{div-lemma}, there exists $v_\e\in H_0^1(\Omega_\e; \R^d)$ such that
$\text{div} (v_\e) =p_\e$ in $\Omega_\e$ and
\begin{equation}\label{E-10}
\e \|\nabla v_\e\|_{L^2(\Omega_\e)} + \| v_\e\|_{L^2(\Omega_\e)} 
\le C \| p_\e\|_{L^2(\Omega_\e)}.
\end{equation}
By using $v_\e$ as a test  function we see that
$$
\aligned
\| p_\e\|_{L^2(\Omega_\e)}^2
 &\le  \e^2 \| \nabla u_\e\|_{L^2(\Omega_\e)}  \| \nabla v_\e\|_{L^2(\Omega_\e)}
+ \| f \|_{L^2(\Omega_\e)} \| v_\e\|_{L^2(\Omega_\e)}
+ \e \| F \|_{L^2(\Omega_\e)} \|\nabla v_\e\|_{L^2(\Omega_\e)}\\
&\le C \| p_\e\|_{L^2(\Omega)}
\left\{  \e \|\nabla u_\e\|_{L^2(\Omega_\e)}
+ \| f\|_{L^2(\Omega_\e)}
+ \| F \|_{L^2(\Omega_\e)} \right\},
\endaligned
$$ 
where we have used \eqref{E-10} for the last inequality.
This gives
\begin{equation}\label{E-10-0}
\| p_\e \|_{L^2(\Omega_\e)}
\le C \left\{  \e \|\nabla u_\e\|_{L^2(\Omega_\e)}
+ \| f\|_{L^2(\Omega_\e)}
+ \| F \|_{L^2(\Omega_\e)} \right\}.
\end{equation}

\noindent Step 2. We consider the case $h=0$ on $\partial\Omega$.
This allows us to use the test function $u_\e\in H^1_0(\Omega_\e; \R^d)$  to obtain 
$$
\e^2 \|\nabla u_\e\|_{L^2(\Omega_\e)}^2
\le  \| p_\e \|_{L^2(\Omega_\e)} \| g\|_{L^2(\Omega_\e)}
+  \| f\|_{L^2(\Omega_\e)} \| u_\e\|_{L^2(\Omega_\e)}
+ \e \| F \|_{L^2(\Omega_\e)} \|\nabla u_\e\|_{L^2(\Omega)},
$$
where we have also used the Cauchy inequality.
It follows from the inequality $\| u_\e\|_{L^2(\Omega_\e)}
\le C \e \| \nabla u_\e\|_{L^2(\Omega_\e)}$ as well as the Cauchy  inequality that
\begin{equation}\label{E-9}
\e^2 \|\nabla u_\e\|_{L^2(\Omega_\e)}^2
\le C\left\{  \| p_\e \|_{L^2(\Omega_\e)} \| g \|_{L^2(\Omega_\e)}
+  \| F \|_{L^2(\Omega_\e)}^2 + \| f\|_{L^2(\Omega_\e)}^2 \right\}.
\end{equation}
This, together with \eqref{E-10-0}, yields \eqref{E-1} by the Cauchy inequality.

\medskip

\noindent Step 3. In the general case, we let $(H, q)\in H^1(\Omega; \R^d) \times L^2(\Omega)$ be a weak solution of
$$
-\Delta H +\nabla q =0 \quad \text{ and } \quad \text{\rm div}(H) =\gamma \quad \text{ in } \Omega,
$$
with boundary data $H=h$ on $\partial\Omega$, where 
$$
\gamma = \frac{1}{|\Omega|} \int_{\partial\Omega} h \cdot n \, d\sigma.
$$
Let $w_\e= R_\e (H)$, where $R_\e$ is the operator given by Lemma \ref{lemma-R}.
Note that $w_\e=0 $ on $\Gamma_\e$, $w_\e =h$ on $\partial \Omega$, 
\begin{equation}\label{E-12}
\aligned
\e \| \nabla w_\e \|_{L^2(\Omega_\e)} + \| w_\e\|_{L^2(\Omega_\e)}
 & \le C \left\{ \e \| \nabla H\|_{L^2(\Omega)} + \| H\|_{L^2(\Omega)} \right\},\\
 \endaligned
 \end{equation}
 and
 \begin{equation}\label{E-13}
 \| \text{\rm div}(w_\e)\|_{L^2(\Omega_\e )}  \le C |\gamma|.
\end{equation}
Thus, $u_\e -w_\e\in H^1_0(\Omega_\e; \R^d)$, and 
$$
-\e^2 \Delta (u_\e -w_\e) +\nabla p_\e = f + \e\,  \text{\rm div} (F) +\e^2 \Delta w_\e \quad \text{ in } \Omega_\e.
$$
Hence, by Step 2, we obtain
$$
\aligned
& \e  \| \nabla (u_\e -w_\e ) \|_{L^2(\Omega_\e)} 
+ \| u_\e -w_\e\|_{L^2(\Omega_\e)}
+ \| p_\e \|_{L^2(\Omega_\e)}\\
&\le C \big\{ \| f\|_{L^2(\Omega_\e)} + \| F\|_{L^2(\Omega_\e)} + \e \|\nabla w_\e\|_{L^2(\Omega_\e)}
+ \| g\|_{L^2(\Omega_\e)} +  |\gamma|   \big\},
\endaligned
$$
where we have used \eqref{E-13}.
It follows from \eqref{E-12}  that
\begin{equation}\label{E-14}
\aligned
 & \e \| \nabla u_\e\|_{L^2(\Omega_\e)} + \| u_\e\|_{L^2(\Omega_\e)} + \| p_\e\|_{L^2(\Omega_\e)}\\
&\le C \big\{ \| f\|_{L^2(\Omega_\e)} + \| F\|_{L^2(\Omega_\e)} 
+ \| g\|_{L^2(\Omega_\e)} +  |\gamma|
+ \e \| \nabla H \|_{L^2(\Omega)} + \| H \|_{L^2(\Omega)}  \big\}.
\endaligned
\end{equation}

\noindent Step 4. To estimate $\|\nabla H\|_{L^2(\Omega)}$ and $\| H \|_{L^2(\Omega)}$, 
we let 
$$
\widetilde{H}= H - \gamma d^{-1} (x-x_0),
$$
where $x_0\in \Omega$ is  fixed.
Note that 
$$
-\Delta \widetilde{H} +\nabla q =0 \quad \text{ and } \quad \text{\rm div} (\widetilde{H} )=0 \quad 
\text{ in } \Omega.
$$
By the energy estimates,
$$
\|\nabla \widetilde{H}\|_{L^2(\Omega)}
\le C \| \widetilde{H}\|_{H^{1/2}(\partial \Omega)}
\le C \| h \|_{H^{1/2}(\partial\Omega)},
$$
and by the nontangential-maximal-function estimates for the Stokes equations in \cite{FKV},
$$
\| \widetilde{H}\|_{L^2(\Omega)} \le C \| \widetilde{H} \|_{L^2(\partial\Omega)}
\le C \| h\|_{L^2(\partial\Omega)}.
$$
It follows that
$$
\e \| \nabla H \|_{L^2(\Omega)} 
+ \| H \|_{L^2(\Omega)}
\le C \big\{ \e \| h\|_{H^{1/2}(\partial\Omega)} + \| h\|_{L^2(\partial\Omega)} \big\}.
$$
This, together with \eqref{E-14}, completes the proof.
\end{proof}

\begin{remark}\label{remark-N}
{\rm
If we replace the right-hand side $ f +\e\, \text{\rm div}(F)$ of the first equation in \eqref{DP-E} by some
$\sigma_\e \in H^{-1}(\Omega_\e; \R^d)$ that satisfies the condition 
$$
|\langle \sigma_\e, \psi \rangle_{H^{-1}(\Omega_\e) \times H_0^1(\Omega_\e)}|
\le \e N \|\nabla \psi \|_{L^2(\Omega_\e)}
$$
for any $\psi \in H_0^1(\Omega_\e; \R^d)$ and  some $N=N(\sigma_\e)>0$, then the same argument as in the proof of Theorem \ref{theorem-E} gives
\begin{equation}\label{re-N-1}
\e \| \nabla u_\e \|_{L^2(\Omega_\e)}
+ \| u_\e\|_{L^2(\Omega_\e)}
+ \| p_\e\|_{L^2(\Omega_\e)}
\le C \Big\{ N + \| g \|_{L^2(\Omega_\e)}
+ \| h\|_{L^2(\partial\Omega)}
+ \e \| h \|_{H^{1/2} (\partial\Omega)} \Big\}
\end{equation}
for any $0< \e<1$, where $C$ depends only on $\Omega$ and $Y_s$.
Note that if $\sigma_\e = f+\e\,  \text{\rm div} (F)$, then $N = C \left\{ \| f\|_{L^2(\Omega_\e) }+ \| F \|_{L^2(\Omega_\e)} \right\}$.
}
\end{remark}

%%%%%%%%%%%%%%%%%%%%%

\section{Correctors for tangential boundary data}\label{section-t}

Consider the Dirichlet problem,
\begin{equation}\label{DP-3}
\left\{
\aligned
-\e^2 \Delta u_\e + \nabla p_\e  & = 0 & \quad & \text{ in } \Omega_\e,\\
\text{\rm div} (u_\e) & =0 & \quad & \text{ in } \Omega_\e,\\
u_\e & =0 & \quad &  \text{ on } \Gamma_\e,\\
u_\e & = h & \quad & \text{ on } \partial \Omega,
\endaligned
\right.
\end{equation}
with boundary data $h$ satisfying the condition 
\begin{equation}\label{BD}
h \cdot n =0 \quad \text{ on } \partial\Omega.
\end{equation}
The goal of this section is to prove the following.

\begin{thm}\label{theorem-t}
Let $\Omega$ be a bounded Lipschitz domain in $\R^d$, $d\ge 2$.
Let $(u_\e, p_\e)$ be a weak solution in $H^1(\Omega_\e; \R^d) \times L^2(\Omega_\e)$  of \eqref{DP-3} with $\int_{\Omega_\e} p_\e\, dx =0$,
where $h\in H^1(\partial\Omega; \R^d)$ satisfies \eqref{BD}.
Then
\begin{equation}\label{t-1}
\e \|\nabla u_\e\|_{L^2(\Omega_\e)}
+\| u_\e\|_{L^2(\Omega_\e)}
+\| p_\e  \|_{L^2(\Omega_\e)} 
\le C \sqrt{\e}
\left\{ \| h \|_{L^2(\partial\Omega)}
+\e\| \nabla_{\tan} h \|_{L^2(\partial\Omega)} \right\},
\end{equation}
where $\nabla_{\tan} h$ denotes the tangential gradient of $h$ on $\partial\Omega$.
\end{thm}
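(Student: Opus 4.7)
The plan is to exploit the hypothesis $h\cdot n=0$ through a Rellich-type boundary estimate localized to the thin strip $T_\e := \{x\in \Omega : \operatorname{dist}(x,\partial\Omega)<c_0\e\}$. Crucially, $T_\e\subset\Omega_\e$ because $\operatorname{dist}(\partial\Omega,\Gamma_\e)\ge c\e$, so on $T_\e$ the pair $(u_\e,p_\e)$ solves a \emph{classical} Stokes system without perforations. Testing \eqref{DP-3} against $u_\e$ itself and integrating by parts, the pressure boundary integral $\int_{\partial\Omega_\e} p_\e(u_\e\cdot n)\,d\sigma$ vanishes (since $u_\e=0$ on $\Gamma_\e$ and $u_\e\cdot n = h\cdot n = 0$ on $\partial\Omega$), so
\[
\int_{\Omega_\e}|\nabla u_\e|^2\,dx \;=\; \int_{\partial\Omega}(\partial_\nu u_\e)\cdot h\,d\sigma \;\le\; \|\nabla u_\e\|_{L^2(\partial\Omega)}\|h\|_{L^2(\partial\Omega)}.
\]
This reproduces \eqref{t-1b} for $u_\e$ in place of $\Psi_t$.

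The main analytic step is to establish the localized Rellich bound
\[
\|\nabla u_\e\|_{L^2(\partial\Omega)} \;\le\; C\bigl(\|\nabla_{\tan} h\|_{L^2(\partial\Omega)} + \e^{-1/2}\|\nabla u_\e\|_{L^2(\Omega_\e)}\bigr).
\]
To obtain it, I would fix a smooth vector field $\beta$ on $\overline{\Omega}$ with $\beta|_{\partial\Omega}=n$ and a cutoff $\eta\in C_c^\infty(\overline{T_\e})$ equal to $1$ near $\partial\Omega$ with $|\nabla\eta|\le C/\e$. Multiplying $-\e^2\Delta u_\e+\nabla p_\e=0$ by $\eta(\beta\cdot\nabla)u_\e$, integrating by parts over $T_\e$, and using $\operatorname{div}(u_\e)=0$, one gets on the principal side the boundary integral $\int_{\partial\Omega}|\partial_\nu u_\e|^2\,d\sigma$ (with a positive constant coming from $\beta\cdot n\equiv 1$), and on the other side tangential-derivative terms $\int_{\partial\Omega}|\nabla_{\tan} h|^2\,d\sigma$ (since $u_\e|_{\partial\Omega}=h$) together with bulk errors of the form $\int_{T_\e}|\nabla u_\e|^2|\nabla\eta|\,dx$. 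The factor $|\nabla\eta|\sim \e^{-1}$ is precisely what yields the $\e^{-1/2}$ after taking a square root. The pressure contributions generated by the identity can be controlled by substituting $\nabla p_\e=\e^2\Delta u_\e$ back in, or by the pressure bound $\|p_\e\|_{L^2(\Omega_\e)}\le C\e\|\nabla u_\e\|_{L^2(\Omega_\e)}$ from Step~1 of the proof of Theorem~\ref{theorem-E} with $f=F=0$.

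Combining the two displayed estimates and using Young's inequality to absorb $\tfrac{1}{2}\|\nabla u_\e\|_{L^2(\Omega_\e)}^2$ into the left-hand side produces
\[
\e^2\|\nabla u_\e\|_{L^2(\Omega_\e)}^2 \;\le\; C\e\bigl(\|h\|_{L^2(\partial\Omega)}^2+\e^2\|\nabla_{\tan} h\|_{L^2(\partial\Omega)}^2\bigr),
\]
which is the desired $\sqrt{\e}$-control of $\e\|\nabla u_\e\|_{L^2(\Omega_\e)}$. The remaining bounds are routine: for $\|u_\e\|_{L^2(\Omega_\e)}$ I split $\Omega_\e=T_\e\cup(\Omega_\e\setminus T_\e)$; on $T_\e$, the fundamental theorem of calculus along inward normal rays starting from $u_\e|_{\partial\Omega}=h$ gives $\|u_\e\|_{L^2(T_\e)}^2\le C\e\|h\|_{L^2(\partial\Omega)}^2+C\e^2\|\nabla u_\e\|_{L^2(\Omega_\e)}^2$; on $\Omega_\e\setminus T_\e$ each perforated cell retains $u_\e=0$ on $\Gamma_\e$, so the cell-wise Poincar\'e argument behind Theorem~\ref{P-theorem} yields $\|u_\e\|_{L^2(\Omega_\e\setminus T_\e)}\le C\e\|\nabla u_\e\|_{L^2(\Omega_\e)}$. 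The bound on $\|p_\e\|_{L^2(\Omega_\e)}$ is already in hand from the duality construction above.

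The principal obstacle is the localized Rellich estimate. Rellich identities for Stokes systems on Lipschitz domains are classical (compare \cite{FKV}), but honestly localizing one to a strip of thickness only $O(\e)$ without losing powers of $\e$ requires that (i) the cutoff $\eta$ introduce no parasitic surface integrals on the inner boundary of $T_\e$, which is secured by taking $\eta$ compactly supported in $\overline{T_\e}$; (ii) all pressure contributions generated by the identity be controlled through the duality bound $\|p_\e\|_{L^2(\Omega_\e)}\le C\e\|\nabla u_\e\|_{L^2(\Omega_\e)}$ so that they do not inflate the scaling; and (iii) the tangential derivative $\nabla_{\tan} u_\e = \nabla_{\tan} h$ be carried through the identity cleanly as a surface term. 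Once this ingredient is in place, the rest of the proof is bookkeeping with Young's inequality and the Poincar\'e-type bounds already available from Section~\ref{section-p}.
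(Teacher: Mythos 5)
Your overall strategy coincides with the paper's: test the equation against $u_\e$ itself, use $h\cdot n=0$ to kill the boundary pressure integral and obtain
$\e^2\|\nabla u_\e\|_{L^2(\Omega_\e)}^2\le\e^2\|\nabla u_\e\|_{L^2(\partial\Omega)}\|h\|_{L^2(\partial\Omega)}$,
and then close the loop via a boundary Rellich-type bound
$\|\nabla u_\e\|_{L^2(\partial\Omega)}\le C\{\|\nabla_{\tan}h\|_{L^2(\partial\Omega)}+\e^{-1/2}\|\nabla u_\e\|_{L^2(\Omega_\e)}\}$.
Where you diverge is in \emph{how} that localized Rellich bound is produced. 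The paper does not re-derive a Rellich identity with an $\e$-scale cutoff; instead it invokes the Rellich estimate of Fabes--Kenig--Verchota \cite{FKV} for the Stokes system on a \emph{fixed-size} Lipschitz cylinder $D_r$ (Lemma~\ref{lemma-t-1}), derives a local version by integrating that estimate over $r\in(1,2)$ (Lemma~\ref{lemma-t-2}), and then transfers it to the physical problem by the dilation $v(x)=u_\e(\e x)$, $q(x)=\e^{-1}p_\e(\e x)$, together with a covering of $\partial\Omega$ by $O(\e)$-balls, which is exactly where the factor $\e^{-1}$ in \eqref{t-3} arises. This keeps the hard analysis entirely inside \cite{FKV} and leaves only a scaling/covering argument.

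Your proposed replacement — multiplying by $\eta(\beta\cdot\nabla)u_\e$ and integrating by parts in a strip of thickness $O(\e)$ — does not go through as described, and the gap is precisely the one you flag as obstacle (ii). The Rellich--Pohozaev identity for the Stokes system produces a \emph{boundary} pressure term
$\int_{\partial\Omega}p_\e\,(\beta\cdot\nabla u_\e)\cdot n\,d\sigma$,
and the interior bound $\|p_\e\|_{L^2(\Omega_\e)}\le C\e\|\nabla u_\e\|_{L^2(\Omega_\e)}$ (or substituting $\nabla p_\e=\e^2\Delta u_\e$) says nothing about $p_\e$ on $\partial\Omega$; neither does it control the second derivatives of $u_\e$ that appear after the substitution. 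Using $u_\e\cdot n=0$ and $\operatorname{div}u_\e=0$ one can rewrite $(\beta\cdot\nabla u_\e)\cdot n$ in terms of tangential derivatives of $u_\e$ and of the normal $n$, but on a \emph{Lipschitz} boundary (which is all the theorem assumes) $n$ is only $L^\infty$ and its tangential derivatives are not functions, so that computation cannot be carried out pointwise. Controlling the boundary pressure is in fact the substantive content of the Rellich estimate in \cite{FKV}; it is not bookkeeping, and treating it as routine is the gap in your argument. Once you accept \cite{FKV} as a black box, the scaling/covering route of the paper is both shorter and avoids these pitfalls. As a minor remark, your two-region argument for $\|u_\e\|_{L^2(\Omega_\e)}$ is unnecessary: Theorem~\ref{P-theorem} requires only $u_\e=0$ on $\Gamma_\e$ (not on $\partial\Omega$), so the Poincar\'e bound $\|u_\e\|_{L^2(\Omega_\e)}\le C\e\|\nabla u_\e\|_{L^2(\Omega_\e)}$ applies directly.
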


Let
\begin{equation}\label{D}
\aligned
D_r & =\big\{ (x^\prime, x_d)\in \R^d: \  |x^\prime|< r \ \ \text{ and } \ \ \psi(x^\prime)< x_d < 100 d(M+1)r \big\},\\
I_r  & =\big\{ (x^\prime, \psi (x^\prime) )\in \R^d: \ |x^\prime|< r \big\},
\endaligned
\end{equation}
where $\psi: \R^{d-1} \to \R$ is a Lipschitz function such that
$\psi (0)=0$ and $\|\nabla \psi\|_\infty \le M$.

\begin{lemma}\label{lemma-t-1}
Let $(v, q)$ be a weak solution in $H^1(D_r; \R^d)\times L^2(D_r)$ of the Dirichlet problem,
\begin{equation}\label{DP-local}
\left\{
\aligned
-\Delta v +\nabla q & =0&\quad &\text{ in } D_r,\\
\text{\rm div} (v) & =0 & \quad & \text{ in } D_r,\\
v & =g & \quad & \text{ on } \partial D_r,
\endaligned
\right.
\end{equation}
where $0< r<\infty$ and $g\in H^1(\partial D_r; \R^d)$ satisfies the condition $\int_{\partial D_r} g \cdot n\, d\sigma =0$.
Then
\begin{equation}\label{t-1-1}
\int_{\partial D_r} |\nabla v|^2\, d\sigma 
\le C \int_{\partial D_r} |\nabla_{\tan} v|^2\, d\sigma,
\end{equation}
where $C$ depends only on $d$ and $M$.
\end{lemma}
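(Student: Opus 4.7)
The plan is to establish \eqref{t-1-1} via the classical Rellich identity for the Stokes system, in the spirit of Fabes--Kenig--Verchota. By the rescaling $v(x) \mapsto v(rx)$, $q(x) \mapsto r\, q(rx)$, both the inequality and the Lipschitz constant $M$ are invariant, so I reduce to the case $r = 1$. A short computation using $\psi(0) = 0$ and $\|\nabla \psi\|_\infty \le M$ shows that $D_1$ is star-shaped with respect to $x_0 := (0, 50\,d\,(M+1))$: the radial field $\alpha(x) := x - x_0$ satisfies $\alpha \cdot n \ge c_0(d, M) > 0$ uniformly on each of the three pieces of $\partial D_1$ (the Lipschitz bottom, the vertical sides, and the flat top).

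With this choice of $\alpha$, I would multiply the Stokes equation $-\Delta v_i + \partial_i q = 0$ by $\alpha_k \partial_k v_i$, sum over $i$, and integrate over $D_1$. Using $\mathrm{div}\,v = 0$, $\partial_j \alpha_k = \delta_{jk}$, and $\mathrm{div}\,\alpha = d$, the standard integration by parts yields
\begin{equation*}
\int_{\partial D_1} (\alpha \cdot n)|\nabla v|^2\, d\sigma = 2\int_{\partial D_1} (\alpha \cdot \nabla v)\cdot \partial_n v\, d\sigma - 2\int_{\partial D_1} (\alpha \cdot \nabla v_i)\, q\, n_i\, d\sigma + (d-2)\int_{D_1} |\nabla v|^2 \, dx.
\end{equation*}
Decomposing each $\nabla v_i = \nabla_{\tan} v_i + (\partial_n v_i)\,n$ on $\partial D_1$, so $|\nabla v|^2 = |\nabla_{\tan} v|^2 + |\partial_n v|^2$, and splitting $\alpha \cdot \nabla v = (\alpha \cdot n)\,\partial_n v + \alpha_{\tan}\cdot \nabla_{\tan} v$, a Cauchy--Schwarz and absorption argument based on $\alpha \cdot n \ge c_0 > 0$ moves the $|\partial_n v|^2$ contributions to the left and produces
\begin{equation*}
\int_{\partial D_1} |\nabla v|^2\, d\sigma \;\le\; C\int_{\partial D_1} |\nabla_{\tan} v|^2\, d\sigma + C\int_{\partial D_1} |q|^2\, d\sigma + C\,\|\nabla v\|_{L^2(D_1)}^2.
\end{equation*}

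To handle the interior term, I replace $v$ with $v - \overline{v}$, where $\overline{v} := \fint_{\partial D_1} v\, d\sigma$; both sides of \eqref{t-1-1} are unchanged. Poincar\'e on the Lipschitz surface $\partial D_1$ then gives $\|v\|_{L^2(\partial D_1)} \le C\,\|\nabla_{\tan} v\|_{L^2(\partial D_1)}$, and combining the interpolation $\|v\|_{H^{1/2}(\partial D_1)}^2 \le C\,\|v\|_{L^2(\partial D_1)}\,\|v\|_{H^1(\partial D_1)}$ with the classical Stokes energy bound $\|\nabla v\|_{L^2(D_1)} + \|q\|_{L^2(D_1)} \le C\,\|v\|_{H^{1/2}(\partial D_1)}$ (with the pressure normalized by $\int_{D_1} q\, dx = 0$) controls the interior $\|\nabla v\|_{L^2(D_1)}^2$ by $\|\nabla_{\tan} v\|_{L^2(\partial D_1)}^2$.

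The hard part, and the step I expect to be the main obstacle, is the boundary pressure bound $\|q\|_{L^2(\partial D_1)} \le C\,\|\nabla_{\tan} v\|_{L^2(\partial D_1)} + \varepsilon\,\|\partial_n v\|_{L^2(\partial D_1)}$, needed to absorb the remaining pressure integral. Since $q$ is harmonic in $D_1$ (because $\Delta q = \mathrm{div}\,\Delta v = 0$) and its trace is coupled to $\partial_n v$ and $\nabla_{\tan} v$ through the Stokes system, I would obtain this either by a second Rellich identity for $q$---multiplying $\Delta q = 0$ by $\alpha \cdot \nabla q$ and using the traction relation on $\partial D_1$ to rewrite $\partial_n q$ in terms of $\nabla v$---or by directly invoking the $L^2$ layer-potential theory for the Stokes system in Lipschitz domains from \cite{FKV}, which delivers precisely a bound of this tangential form. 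Once the boundary pressure term is absorbed on the left, \eqref{t-1-1} follows.
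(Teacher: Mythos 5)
Your proposal is essentially the paper's proof: reduce to $r=1$ by rescaling and invoke the Rellich estimate for the Stokes system on Lipschitz domains proved by Fabes--Kenig--Verchota in \cite{FKV}. The paper simply cites \cite{FKV} at that point; you unpack the Rellich-identity machinery that lies behind the citation (star-shapedness of $D_1$, the vector field $\alpha$, the interior absorption, and the boundary pressure bound), and you correctly identify the pressure term as the genuine difficulty, ultimately falling back to \cite{FKV} or a second Rellich identity for $q$ to close it --- which is exactly what the reference provides. So there is no gap and no meaningful divergence from the paper; your version is just a more detailed exposition of the same argument.
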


\begin{proof}
By dilation we may assume $r=1$, in which case 
the Rellich estimate \eqref{t-1-1} was proved in \cite[Theorem 4.15]{FKV}.
\end{proof}

\begin{lemma}\label{lemma-t-2}
Let $(v, q)$ be a weak solution of \eqref{DP-local} with $r=2$.
Then
\begin{equation}\label{t-2-1}
\int_{I_1} |\nabla v|^2\, d\sigma \le  C\int_{I_2} |\nabla_{\tan}  g |^2\, d\sigma + C \int_{D_2} |\nabla v|^2\, dx,
\end{equation}
where $C$ depends only on $d$ and $M$.
\end{lemma}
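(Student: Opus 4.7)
My plan is to prove Lemma~\ref{lemma-t-2} via a localized version of the Rellich identity behind Lemma~\ref{lemma-t-1}. Fix a smooth vector field $\alpha:\R^d\to\R^d$ supported in a neighborhood of $\overline{I_{3/2}}$, with $\alpha\cdot n\ge c_0>0$ on $I_1$, $\alpha\cdot n\ge 0$ on $I_{3/2}$, and $\alpha\equiv 0$ on $\partial D_2\setminus I_{3/2}$. A concrete choice is $\alpha(x',x_d)=-\zeta(x')\chi(x_d-\psi(x'))e_d$, where $\zeta\in C_c^\infty(\{|x'|<3/2\})$ equals $1$ on $\{|x'|\le 1\}$ and $\chi\in C_c^\infty(\R)$ equals $1$ near $0$; on $I_{3/2}$ one then has $\alpha\cdot n=\zeta(x')(1+|\nabla\psi(x')|^2)^{-1/2}$, which enjoys the required positivity.

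Pair the Stokes equation $-\Delta v_j+\partial_j q=0$ with $(\alpha\cdot\nabla)v_j$ and integrate by parts over $D_2$. Using $\text{div}(v)=0$ and the vanishing of $\alpha$ on $\partial D_2\setminus I_{3/2}$ yields the Rellich-type identity
\[
\tfrac{1}{2}\int_{I_{3/2}}(\alpha\cdot n)|\nabla v|^2\,d\sigma = \int_{I_{3/2}}\bigl[\partial_n v_j(\alpha\cdot\nabla)v_j - qn_j(\alpha\cdot\nabla)v_j\bigr]\,d\sigma + R,
\]
with $|R|\le C\int_{D_2}(|\nabla v|^2+|q|^2)\,dx$. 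Decomposing $\alpha=(\alpha\cdot n)n+\alpha_{\tan}$ on $I_{3/2}$ gives $(\alpha\cdot\nabla)v_j=(\alpha\cdot n)\partial_n v_j+(\alpha_{\tan}\cdot\nabla)v_j$, in which $(\alpha_{\tan}\cdot\nabla)v_j$ is a tangential derivative of $g$. The identity $n_j\partial_n v_j=-\text{div}_{\tan}(v)$ (a consequence of $\text{div}(v)=0$) converts the normal part of the pressure boundary term into a tangential expression $-q(\alpha\cdot n)\text{div}_{\tan}(g)$. The remaining mixed pressure contribution is handled by tangential integration by parts on $I_{3/2}$ (no boundary-of-boundary terms appear since $\alpha$ is compactly supported inside $I_{3/2}$); the derivative is transferred onto $q$, producing a $\nabla_{\tan} q$ which, via the bulk relation $\nabla q=\Delta v$ and the Ne\v{c}as bound $\|q\|_{L^2(D_2)}\le C\|\nabla v\|_{L^2(D_2)}$ (after normalizing $\int_{D_2}q=0$), is absorbed into the bulk term.

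A Young's-inequality absorption of the $(\alpha\cdot n)|\partial_n v|^2$ pieces into the LHS, combined with $|\nabla v|^2=|\partial_n v|^2+|\nabla_{\tan}v|^2$ and $(\alpha\cdot n)\ge c_0$ on $I_1$, finishes the proof of \eqref{t-2-1}. The main obstacle is the pressure-boundary integral $\int_{I_{3/2}}qn_j(\alpha\cdot\nabla)v_j\,d\sigma$, for which there is no direct $L^2$ estimate on $q\bigl|_{I_{3/2}}$ in terms of the local data; the way through is to exploit $\text{div}(v)=0$ together with a tangential integration by parts on $I_{3/2}$, converting every pressure-involved boundary term into either a tangential derivative of $g$ or an interior integral absorbed by the Ne\v{c}as pressure bound.
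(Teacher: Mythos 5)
Your approach is genuinely different from the paper's, and it has a gap at precisely the point you flag as the "main obstacle."

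The paper's proof is an averaging trick: since $\text{div}(v)=0$ in $D_2$, the compatibility condition $\int_{\partial D_r} v\cdot n\,d\sigma=0$ holds for every $1<r<2$, so the global Rellich estimate of Lemma~\ref{lemma-t-1} (taken from \cite{FKV}) applies on each $D_r$. Combined with $I_1\subset I_r\subset\partial D_r$, this gives
$\int_{I_1}|\nabla v|^2\,d\sigma\le C\int_{I_2}|\nabla_{\tan}g|^2\,d\sigma+C\int_{D_2\cap\partial D_r}|\nabla v|^2\,d\sigma$
for each $r\in(1,2)$, and integrating in $r$ converts the last term into the volume integral $\int_{D_2}|\nabla v|^2\,dx$. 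The hard analysis — controlling the pressure boundary term in the Rellich identity for the Stokes system on a Lipschitz domain — is entirely outsourced to \cite{FKV} through Lemma~\ref{lemma-t-1}.

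Your proposal instead re-derives a \emph{localized} Rellich identity from scratch, which amounts to re-proving a local version of the FKV estimate. The decomposition of $n_j(\alpha\cdot\nabla)v_j$ into tangential derivatives of $g$ using $\text{div}(v)=0$ is correct, and after that step the pressure boundary term reads $\int_{I_{3/2}}q\,T(g)\,d\sigma$ with $T(g)$ a tangential-derivative expression in $g$. The problem is that you must then control $\|q\|_{L^2(I_{3/2})}$, which is of the same strength as the quantity you are trying to estimate (boundary regularity of the Neumann-type data $(\partial_n v, q)$). Your proposed fix — integrate by parts on $I_{3/2}$ to land $\nabla_{\tan}q$ and then ``absorb into the bulk term'' via $\nabla q=\Delta v$ and the Ne\v{c}as bound — does not close: $\nabla_{\tan}q$ restricted to $I_{3/2}$ is a boundary trace quantity, and the bulk relation $\nabla q=\Delta v$ together with $\|q\|_{L^2(D_2)}\le C\|\nabla v\|_{L^2(D_2)}$ gives no control on boundary traces of $\nabla_{\tan}q$ (there is a half-derivative loss in trace, and $\Delta v$ is not bounded in $L^2(D_2)$ by $\nabla v$). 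The actual FKV argument exploits additional cancellation structure tied to the conormal derivative $\partial v/\partial n-qn$, plus a companion identity controlling $\int_\partial(\alpha\cdot n)q^2$; none of this appears in your sketch. In short, your step ``converting every pressure-involved boundary term into \dots an interior integral absorbed by the Ne\v{c}as pressure bound'' is the place the argument breaks. You should instead invoke Lemma~\ref{lemma-t-1} on the dilated domains $D_r$ and average in $r$, which both avoids re-doing the FKV analysis and automatically produces the volume term $\int_{D_2}|\nabla v|^2\,dx$ in the target estimate.
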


\begin{proof}
It follows from \eqref{t-1-1} that  for $1< r< 2$,
$$
\int_{I_1} |\nabla v|^2\, d\sigma
  \le C \int_{\partial D_r} |\nabla v|^2\, d\sigma 
\le C \int_{\partial D_r} |\nabla_{\tan}  v|^2\, d\sigma.
$$
Hence,
$$
\int_{I_1} |\nabla v|^2\, d\sigma \le C \int_{I_2} |\nabla_{\tan} g |^2\, d\sigma
+ C \int_{D_2 \cap \partial D_r} |\nabla v|^2\, d\sigma.
$$
By integrating the inequality  above  in $r$ over the interval $(1, 2)$, we obtain \eqref{t-2-1}.
\end{proof}

\begin{proof}[Proof of Theorem \ref{theorem-t}]

We start with the observation,
$$
\e^2 \int_{\Omega_\e} |\nabla u_\e|^2\, dx = \e^2 \int_{\partial\Omega} \frac{\partial u_\e}{\partial n} \cdot u_\e\, d\sigma,
$$
where we have used \eqref{DP-3} and \eqref{BD}.
It follows by the Cauchy inequality that
\begin{equation}\label{t-2}
\e^2 \int_{\Omega_\e} |\nabla u_\e|^2\, dx
\le \e^2 \| \nabla u_\e\|_{L^2(\partial\Omega)} \| h\|_{L^2(\partial\Omega)}.
\end{equation}
We will show that
\begin{equation}\label{t-3}
\int_{\partial\Omega} |\nabla u_\e|^2\, d\sigma
\le C \int_{\partial\Omega} |\nabla_{\tan}  h|^2\, d\sigma + \frac{C}{\e} \int_{\Sigma_{c\e}} |\nabla u_\e|^2\, dx,
\end{equation}
where $\Sigma_{c\e}  =\{ x\in \Omega: \ \text{dist}(x, \partial\Omega)< c\e\}\subset \Omega_\e$.
Assume  \eqref{t-3} for a moment.
Then
$$
\| \nabla u_\e \|_{L^2(\partial\Omega)}
\le C\| \nabla_{\tan}  h\|_{L^2(\partial\Omega)} + C \e^{-1/2} \|\nabla u_\e\|_{L^2(\Omega_\e)}.
$$
This, together with \eqref{t-2} and the Cauchy inequality, gives
$$
\aligned
\e^2 \| \nabla u_\e\|_{L^2(\Omega_\e )}^2
&\le C \e^2\| h\|_{L^2(\partial\Omega)} \| \nabla_{\tan}  h\|_{L^2(\partial\Omega)}
+ C \e^{3/2} \| h\|_{L^2(\partial\Omega)} \| \nabla u_\e\|_{L^2(\Omega_\e)}\\
&\le  C \e^2 \| h\|_{L^2(\partial\Omega)} \| \nabla_{\tan}  h\|_{L^2(\partial\Omega)}
+ C \e \| h\|_{L^2(\partial\Omega)}^2
+ (1/2)   \e^2 \|\nabla u_\e\|^2_{L^2(\Omega_e)},
\endaligned
$$
which yields  the estimate for $\e \| \nabla u_\e \|_{L^2(\Omega_\e)} $ in  \eqref{t-1}.
The estimate for $\|u_\e\|_{L^2(\Omega_\e)}$ follows by \eqref{PI}, while
the  bound  for $\|p_\e\|_{L^2(\Omega_\e)}  $ follows from \eqref{E-10-0}.

It remains to prove \eqref{t-3}.
To this end, we shall prove in a few lines below  that
\begin{equation}\label{t-4}
\int_{B(x_0, c_0\e)\cap \partial \Omega} |\nabla u_\e|^2\, d\sigma
\le C \int_{B(x_0, c_1 \e)\cap \partial \Omega} |\nabla_{\tan}  h|^2\, d\sigma
+ \frac{C}{\e} \int_{B(x_0, c_1\e)\cap \Omega} |\nabla u_\e|^2\, dx
\end{equation} 
for any  $x_0\in \partial \Omega$, where $0<c_0< c_1$ are sufficiently small.
The desired estimate \eqref{t-3} follows from \eqref{t-4} by covering $\partial\Omega$ with a finite number of balls
$\{B(x_k, c_0 \e)\}$ centered on $\partial\Omega$.

Finally, we note that if $v (x)=u_\e (\e x)$ and $q(x)=\e^{-1} p_\e (\e x)$,
then $-\Delta v +\nabla q=0$ and div$(v)=0$.
As a result, the estimate \eqref{t-4} follows from \eqref{t-2-1} by a translation and rotation of
the coordinate system. We point out that since the constant $C$ in \eqref{t-2-1} depends only on $d$ and $M$,
the constant $C$ in \eqref{t-4} depends only on $d$ and the Lipschitz character of $\Omega$.
In particular, $C$ does not depend on $\e$.
\end{proof}

As a corollary of Theorem \ref{theorem-t}, we are able to construct  a tangential boundary layer corrector.

\begin{thm}\label{t-c-theorem}
Let $\Omega$ be bounded domain with $C^{2, \alpha}$ boundary for some $\alpha>0$.
Also assume that $\partial Y_s$ is $C^{1, \alpha}$.
Let $(\Psi_t, q_t)$ be a weak solution in $H^1(\Omega_\e; \R^d) \times L^2(\Omega_\e)$ of the Dirichlet problem 
\eqref{DP-3} with $\int_{\Omega_\e} q_t \, dx=0$, where the boundary data $h$ is given by
\begin{equation}\label{c-t-1}
h= b -W_j(x/\e) \Big(f_j -\frac{\partial p_0}{\partial x_j}\Big) 
+ \Big[-b\cdot n  + n_i W^i_j (x/\e) \Big(f_j -\frac{\partial p_0}{\partial x_j}\Big)  \Big] n,
\end{equation}
$ f\in C^{1, 1/2} (\overline{\Omega}; \R^d)$, 
$b\in H^1(\partial\Omega; \R^d)$ satisfies $\int_{\partial\Omega} b \cdot n \, d \sigma =0$,  and $p_0$ is the solution of the Neumann problem,
\begin{equation}\label{p-0}
\left\{
\aligned
K_j^i \frac{\partial}{\partial x_i} \Big( f_j -\frac{\partial p_0}{\partial x_j} \Big)  & = 0 & \quad & \text{ in } \Omega,\\
n_i K_j^i \Big(f_j -\frac{\partial p_0}{\partial x_j}\Big)  & =b\cdot n & \quad & \text{ on } \partial\Omega,
\endaligned
\right.
\end{equation}
with $\int_\Omega p_0\, dx=0$.
Then
\begin{equation}\label{c-t-2}
\aligned
 & \e \|\nabla \Psi_t \|_{L^2 (\Omega_\e)} + \|\Psi_t \|_{L^2(\Omega_\e)} +  \| q_t  \|_{L^2(\Omega_\e)} \\
& \le C \sqrt{\e} \Big\{ \| f-\nabla p_0\|_{L^2(\partial\Omega)} + \| b \|_{L^2(\partial\Omega)} 
+ \e  \| \nabla_{\tan} ( f-\nabla p_0)\|_{L^2(\partial\Omega)}  + \e \|\nabla_{\tan} b \|_{L^2(\partial\Omega)} \Big\} 
\endaligned
\end{equation}
for any $0< \e< 1$.
\end{thm}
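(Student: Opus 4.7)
The strategy is to verify that the boundary data $h$ given by \eqref{c-t-1} satisfies the tangentiality condition \eqref{BD} of Theorem \ref{theorem-t}, then apply that theorem directly, and finally estimate the two resulting surface norms $\|h\|_{L^2(\partial\Omega)}$ and $\e\|\nabla_{\tan} h\|_{L^2(\partial\Omega)}$ in terms of the data.

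First, a direct computation shows $h\cdot n = 0$ on $\partial\Omega$: the scalar in the last bracket of \eqref{c-t-1} is exactly $-\bigl(b - W_j(x/\e)(f_j - \partial p_0/\partial x_j)\bigr)\cdot n$, so the normal components cancel. This gives \eqref{BD}, and Theorem \ref{theorem-t} applied to $(\Psi_t, q_t)$ immediately reduces the problem to bounding
$$\sqrt{\e}\bigl\{\|h\|_{L^2(\partial\Omega)} + \e\|\nabla_{\tan} h\|_{L^2(\partial\Omega)}\bigr\}.$$

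For the $L^2$ norm, I would use that the cell problem \eqref{W} with a $C^{1,\alpha}$ solid part admits a solution $W \in L^\infty(Y_f) \cap C^{1,\alpha}(\overline{Y_f})$, extended by zero. Since $|W(x/\e)| \le C$ pointwise, the triangle inequality yields
$$\|h\|_{L^2(\partial\Omega)} \le C\|b\|_{L^2(\partial\Omega)} + C\|f-\nabla p_0\|_{L^2(\partial\Omega)}.$$
For the tangential gradient, differentiating $W(x/\e)$ produces a factor of $1/\e$, while differentiating $n$ costs only an $L^\infty$ bound coming from $\Omega\in C^{2,\alpha}$. One is led to
$$\|\nabla_{\tan} h\|_{L^2(\partial\Omega)} \le C\bigl\{\|\nabla_{\tan} b\|_{L^2} + \|b\|_{L^2} + \|\nabla_{\tan}(f-\nabla p_0)\|_{L^2}\bigr\} + \frac{C}{\e}\|f-\nabla p_0\|_{L^2(\partial\Omega)}.$$
Multiplying by $\e$ converts the potentially dangerous $1/\e$ factor into an $O(1)$ contribution and produces exactly the four terms on the right side of \eqref{c-t-2} after the common $\sqrt{\e}$ prefactor.

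The only subtle point is the $O(1/\e)$ bound on $\nabla_{\tan} W(x/\e)$. Because $W$ vanishes on $\partial(\e\omega)$ by \eqref{W}, the zero extension is continuous, and its classical tangential derivative on $\partial\Omega$ equals $\e^{-1}(\nabla_{\tan} W)(x/\e)$ on the fluid side and $0$ on the solid side; since $\partial Y_s \in C^{1,\alpha}$, Schauder estimates for the Stokes cell problem give $\nabla W\in L^\infty(Y_f)$, yielding the claimed pointwise bound. With this in hand, everything else is a direct bookkeeping computation combined with the estimate \eqref{t-1} from Theorem \ref{theorem-t}.
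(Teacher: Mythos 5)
Your proposal is correct and follows essentially the same route as the paper: verify $h\cdot n=0$, invoke Theorem \ref{theorem-t}, and then bound $\|h\|_{L^2(\partial\Omega)}$ and $\e\|\nabla_{\tan} h\|_{L^2(\partial\Omega)}$ using $W, \nabla W\in L^\infty$ (the paper states this as $W_j\in C^1(\overline{\omega})$ from the $C^{1,\alpha}$ regularity of $\partial Y_s$), with the $\e^{-1}$ from differentiating $W(x/\e)$ cancelled by the prefactor $\e$. The extra remark you make about the tangential derivative of the zero extension being bounded because $W$ vanishes on $\partial Y_s$ is a sound clarification of what the paper leaves implicit.
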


\begin{proof}
Note that $h\cdot n=0$ on $\partial\Omega$. Also,
under the assumption that $\partial Y_s$ is $C^{1, \alpha }$, we have $W_j=W_j(y) \in C^1(\overline{\omega})$.
It follows that
$$
\| h\|_{L^2(\partial\Omega)}
\le C \big\{  \| f -\nabla p_0\|_{L^2(\partial\Omega)} + \| b\|_{L^2(\partial\Omega)} \big\},
$$
and
$$
\aligned
& \|\nabla_{\tan} h \|_{L^2(\partial\Omega)}\\
& \le C \Big\{   \e^{-1} \| f-\nabla p_0 \|_{L^2(\partial\Omega)} +  \| \nabla_{\tan} ( f-\nabla p_0) \|_{L^2(\partial\Omega)}
+ \| \nabla_{\tan} b \|_{L^2(\partial\Omega)}   +\|b\|_{L^2(\partial \Omega)} \Big\}.
\endaligned
$$
As a result, the estimate \eqref{c-t-2} follows readily from \eqref{t-1}.
\end{proof}

%%%%%%%%%%%%%

\section{Correctors for  normal boundary data}\label{section-n}

In this section we consider the Dirichlet problem \eqref{DP-3}, where the boundary data $h$ is given by
\begin{equation}\label{N}
h=\big\{ n_i [ W_j^i  (x/\e) - K_j^i ] g_j  -\gamma \big\} n,
\end{equation}
where $g=(g_1, g_2, \dots, g_d)  \in H^1(\partial\Omega; \R^d)$, and $\gamma\in \R$ is chosen so that
$\int_{\partial \Omega} h \cdot n \, d\sigma =0$, i.e.,
\begin{equation}\label{gamma}
\gamma=\frac{1}{|\partial\Omega|}
\int_{\partial\Omega}
n_i  [ W_j^i (x/\e) - K_j^i ] g_j\, d\sigma.
\end{equation}
The goal of this section is to prove the following.

\begin{thm}\label{theorem-N}
Let $\Omega$ be a bounded $C^{2, \alpha}$ domain in $\R^d$ for some $\alpha>0$.
Also assume that $\partial Y_s$ is $C^{1, \alpha}$.
Let $(u_\e, p_\e)$ be a weak solution in $H^1(\Omega_\e; \R^d) \times L^2(\Omega_\e)$ of 
\eqref{DP-3} with $\int_{\Omega_\e} p_\e\, dx =0$, where $h$ is given by \eqref{N}.
Then
\begin{equation}\label{N-1}
\e \|\nabla u_\e \|_{L^2(\Omega_\e)} +\| u_\e\|_{L^2(\Omega_\e)} 
+ \| p_\e  \|_{L^2(\Omega_\e)} 
\le C \sqrt{\e} \, 
\left\{ \| g \|_{L^2 (\partial\Omega)}  
+  \sqrt{\e}\,  \|\nabla_{\tan}  g\|_{L^2 (\partial\Omega)}  \right\},
\end{equation}
for any $0< \e<1$.
\end{thm}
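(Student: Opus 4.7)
The plan is to combine the antisymmetric flux representation \eqref{n-0c} with a Poisson-kernel representation for the Stokes system in $\Omega$, after first reducing the perforated problem to one on the unperforated domain $\Omega$.

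First, I would reduce \eqref{N-1} to an estimate on a whole-domain Stokes solution. Let $(v,q_v)\in H^1(\Omega;\R^d)\times L^2(\Omega)$ solve $-\Delta v + \nabla q_v = 0$, $\text{\rm div}(v)=0$ in $\Omega$ with $v=h$ on $\partial\Omega$; this is solvable because $\int_{\partial\Omega} h\cdot n\, d\sigma = 0$ by the choice of $\gamma$. Let $w_\e = R_\e(v)$ via Theorem \ref{theorem-R}, which is divergence-free in $\Omega_\e$, equals $v$ on $\partial\Omega$, and vanishes on $\Gamma_\e$. Writing $-\e^2\Delta w_\e = \e\,\text{\rm div}(F)$ with $F=-\e\nabla w_\e$, the difference $u_\e - w_\e\in H^1_0(\Omega_\e;\R^d)$ satisfies a Stokes system covered by Theorem \ref{theorem-E}. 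Combined with \eqref{R-2}, this yields
$$
\e\|\nabla u_\e\|_{L^2(\Omega_\e)} + \|u_\e\|_{L^2(\Omega_\e)} + \|p_\e\|_{L^2(\Omega_\e)} \le C\bigl\{\e\|\nabla v\|_{L^2(\Omega)} + \|v\|_{L^2(\Omega)}\bigr\},
$$
reducing the problem to bounding the right-hand side by $C\sqrt{\e}\{\|g\|_{L^2(\partial\Omega)} + \sqrt{\e}\,\|\nabla_{\tan}g\|_{L^2(\partial\Omega)}\}$.

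Second, for $\|v\|_{L^2(\Omega)}$ I would exploit \eqref{n-0c}: write $h=\alpha n$ with $\alpha$ proportional to $\e\,T_{i\ell}[\phi_{\ell j}^i(x/\e)]g_j - \gamma$, where $T_{i\ell}=n_i\partial_\ell - n_\ell\partial_i$ is a tangential derivative on $\partial\Omega$ satisfying the integration-by-parts identity $\int_{\partial\Omega}T_{i\ell}(f)g\,d\sigma = -\int_{\partial\Omega}f\,T_{i\ell}(g)\,d\sigma$ (a direct consequence of the divergence theorem). Applied with constant test function, this gives $|\gamma|\le C\e\,\|\nabla_{\tan}g\|_{L^2(\partial\Omega)}$. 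For the oscillatory part, I represent $v(x)=\int_{\partial\Omega}\mathbb{P}(x,y)h(y)\,d\sigma$ via the Stokes Poisson kernel and integrate $T_{i\ell}$ by parts in $y$, yielding
$$
v(x) = -C\,\e\int_{\partial\Omega}\phi_{\ell j}^i(y/\e)\,T_{i\ell}^y\bigl[\mathbb{P}(x,y)\,n(y)\,g_j(y)\bigr]\,d\sigma\; -\; \gamma\int_{\partial\Omega}\mathbb{P}(x,y)\,n(y)\,d\sigma.
$$
With the standard kernel bounds $|\mathbb{P}(x,y)|\le C\delta(x)|x-y|^{-d}$ and $|\nabla_y\mathbb{P}(x,y)|\le C\delta(x)|x-y|^{-d-1}$ (valid in $C^{2,\alpha}$ domains), I split $\Omega=\{\delta(x)<\e\}\cup\{\delta(x)\ge\e\}$. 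On the strip, the Dahlberg--Kenig--Verchota nontangential estimate $\|v^*\|_{L^2(\partial\Omega)}\le C\|h\|_{L^2(\partial\Omega)}\le C\|g\|_{L^2}$ from \cite{FKV} gives $\|v\|_{L^2(\{\delta<\e\})}\le C\sqrt{\e}\,\|g\|_{L^2}$. On the interior, Cauchy--Schwarz and Fubini on the pointwise bound, using the key estimate $\int_{\{\delta(x)>\e\}}|x-y|^{-d-1}\,dx\le C/\e$ for $y\in\partial\Omega$, supply the matching $C\sqrt{\e}\,\|g\|_{L^2} + C\e\,\|\nabla_{\tan}g\|_{L^2}$ bound.

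Third, for the companion estimate $\e\|\nabla v\|_{L^2(\Omega)}$ a soft interpolation suffices: $\|h\|_{L^2(\partial\Omega)}\le C\|g\|_{L^2}$ and $\|h\|_{H^1(\partial\Omega)}\le C(\e^{-1}\|g\|_{L^2}+\|\nabla_{\tan}g\|_{L^2})$ interpolate to $\|h\|_{H^{1/2}(\partial\Omega)}\le C\e^{-1/2}\|g\|_{L^2} + C(\|g\|_{L^2}\|\nabla_{\tan}g\|_{L^2})^{1/2}$. Combined with the standard energy bound $\|v\|_{H^1(\Omega)}\le C\|h\|_{H^{1/2}(\partial\Omega)}$ for the Dirichlet Stokes system in $\Omega$ and Cauchy's inequality, this delivers the required bound on $\e\|\nabla v\|_{L^2(\Omega)}$.

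I expect the main obstacle to be the kernel analysis in the second step: the naive bilinear operator $g\mapsto\int\delta(x)|x-y|^{-d-1}|g(y)|\,d\sigma$ is not bounded $L^2(\partial\Omega)\to L^2(\Omega)$, so the $\e$-gain from tangential integration by parts cannot be harvested by a single global estimate. The peeling of a near-boundary strip of width $\e$ (to be handled by nontangential maximal functions) and the accompanying Fubini computation on the interior region are essential; the whole argument ultimately rests on the quantitative balance between the oscillation scale $\e$ and the distance-to-boundary regularization built into the Stokes Poisson kernel.
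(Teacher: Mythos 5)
Your proposal follows essentially the same route as the paper's own proof: reduce to a whole-domain Stokes problem $(H,q)$ via the restriction operator $R_\e$ and the energy estimate \eqref{E-14}; exploit the skew-symmetric potential $\phi_{\ell j}^i$ and tangential integration by parts on $\partial\Omega$ to extract the factor $\e$ from the boundary data; split $\Omega$ into the $\e$-strip (bounded by the nontangential-maximal-function estimate from \cite{FKV}) and the interior (bounded by Cauchy--Schwarz and Fubini on the Green/Poisson kernel, which is exactly the content of Lemma \ref{lemma-n-10}); and finally control $\e\|\nabla H\|_{L^2(\Omega)}$ by the interpolated $H^{1/2}$-norm of $h$. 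The only cosmetic differences are that you phrase the representation in terms of the Poisson kernel rather than the Green function matrix $(G,\Pi)$, and your interior kernel bound $\int_{\{\delta>\e\}}|x-y|^{-d-1}\,dx \le C/\e$ is organized with the extra $\delta(x)$-gain folded into the kernel rather than appearing as the weight in the paper's $T$-operator; both yield the identical $\e^{1/2}$ gain.
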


We will prove a series of lemmas before we give the proof of Theorem \ref{theorem-N}.
We begin with an estimate for $\|h\|_{H^{1/2}(\partial\Omega)}$.

\begin{lemma}\label{lemma-n-0}
Let $h$ be given by \eqref{N}. Then
\begin{equation}\label{n-00}
\| h\|_{H^{1/2}(\partial\Omega)}
\le  C \e^{-1/2} \big\{ \| g\|_{L^2 (\partial\Omega)} 
+ \e  \| \nabla_{\tan} g \|_{L^2(\partial\Omega)}     \big\}
\end{equation}
for any $0< \e<1$.
\end{lemma}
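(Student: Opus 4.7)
The plan is to reduce the $H^{1/2}$ bound to an interpolation between $L^2$ and $H^1$ norms on $\partial\Omega$, exploiting the fact that $W_j^i(x/\e) - K_j^i$ is a bounded periodic function of $x/\e$ whose tangential gradient is $O(\e^{-1})$. Write $h = \psi_\e(x)\, n(x)$ with
\begin{equation*}
\psi_\e(x) = n_i(x)\bigl[W_j^i(x/\e) - K_j^i\bigr] g_j(x) - \gamma.
\end{equation*}
Since $\partial Y_s$ is $C^{1,\alpha}$, the cell-problem solution $W_j$ lies in $C^1(\overline{\omega})$, so $W_j^i(x/\e) - K_j^i$ is uniformly bounded in $\e$; and by definition $|\gamma| \le C|\partial\Omega|^{-1/2}\|g\|_{L^2(\partial\Omega)}$. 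This immediately yields
\begin{equation*}
\|h\|_{L^2(\partial\Omega)} \le C \|g\|_{L^2(\partial\Omega)}.
\end{equation*}

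Next I would differentiate $h$ tangentially along $\partial\Omega$. Since $\Omega$ is $C^{2,\alpha}$, the normal $n$ is $C^{1,\alpha}$ so $\nabla_{\tan} n$ is bounded; the only large term comes from $\nabla_{\tan}[W_j^i(x/\e)]$, which is $O(\e^{-1})$ because $W \in C^1(\overline{\omega})$. Collecting the three contributions gives
\begin{equation*}
\|h\|_{H^1(\partial\Omega)} \le C \e^{-1} \|g\|_{L^2(\partial\Omega)} + C \|\nabla_{\tan} g\|_{L^2(\partial\Omega)}.
\end{equation*}

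Now I would apply the standard interpolation $\|h\|_{H^{1/2}(\partial\Omega)} \le C \|h\|_{L^2(\partial\Omega)}^{1/2} \|h\|_{H^1(\partial\Omega)}^{1/2}$ and finish with Young's inequality. Combining the two bounds gives
\begin{equation*}
\|h\|_{H^{1/2}(\partial\Omega)} \le C \|g\|_{L^2(\partial\Omega)}^{1/2}\bigl(\e^{-1}\|g\|_{L^2(\partial\Omega)} + \|\nabla_{\tan} g\|_{L^2(\partial\Omega)}\bigr)^{1/2},
\end{equation*}
and then splitting with $\|g\|_{L^2}^{1/2}\|\nabla_{\tan} g\|_{L^2}^{1/2} \le \tfrac12 \e^{-1/2}\|g\|_{L^2} + \tfrac12 \e^{1/2}\|\nabla_{\tan} g\|_{L^2}$ converts the cross term into the required form $C\e^{-1/2}\bigl(\|g\|_{L^2(\partial\Omega)} + \e\|\nabla_{\tan} g\|_{L^2(\partial\Omega)}\bigr)$.

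The main obstacle is really just the tangential differentiation: one must confirm that the composition $W_j^i(x/\e)$ restricted to $\partial\Omega$ has $\nabla_{\tan}$-norm controlled by $\e^{-1}\|\nabla W\|_{L^\infty(\omega)}$ uniformly, which requires the $C^1$-regularity of $W$ (coming from $\partial Y_s \in C^{1,\alpha}$) together with the $C^{2,\alpha}$ smoothness of $\partial\Omega$ so that tangential vector fields are Lipschitz. An alternative that avoids interpolation would be to use the antisymmetric tensor $\phi_{\ell j}^i$ from \eqref{n-0b} to rewrite $n_i[W_j^i(x/\e) - K_j^i] = \e\,(n_i \partial_\ell - n_\ell \partial_i)\bigl[\phi_{\ell j}^i(x/\e)\bigr]$ and test against $H^{-1/2}$ functions via integration by parts as in \eqref{n-0c}, but the interpolation route is the shorter path.
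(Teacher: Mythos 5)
Your proof is correct and follows essentially the same route as the paper's: interpolate $\|h\|_{H^{1/2}(\partial\Omega)}\le C\|h\|_{L^2(\partial\Omega)}^{1/2}\|h\|_{H^1(\partial\Omega)}^{1/2}$, use $W\in C^1(\overline{\omega})$ (from $\partial Y_s\in C^{1,\alpha}$) to get $\|h\|_{L^2}\le C\|g\|_{L^2}$ and $\|\nabla_{\tan}h\|_{L^2}\le C\e^{-1}\|g\|_{L^2}+C\|\nabla_{\tan}g\|_{L^2}$, and finish with Young's inequality to extract the $\e^{-1/2}$ prefactor. The only difference is that you make the bound on $|\gamma|$ via Cauchy--Schwarz explicit, which the paper folds into ``it is easy to see''.
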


\begin{proof}
Note that 
$$
\aligned
\|h  \|_{H^{1/2}(\partial\Omega)}
&\le C \| h \|_{L^2(\partial\Omega)}^{1/2} \| h \|^{1/2} _{H^1(\partial\Omega)}\\
& \le C \left\{  \e^{-1/2} \| h  \|_{L^2(\partial\Omega)} + \e^{1/2} \| \nabla_{\tan}  h  \|_{L^2(\partial\Omega)}\right\},
\endaligned
$$
where we have used the Cauchy inequality. It is easy to see that 
$
\| h  \|_{L^2(\partial\Omega)}
\le C  \| g\|_{L^2(\partial\Omega)},
$
and
$$
\| \nabla_{\tan}  h  \|_{L^2(\partial\Omega)}
\le C\left\{  \e^{-1} \|  g\|_{L^2(\partial\Omega)}
+  \| \nabla_{\tan}  g \|_{L^2(\partial\Omega)} \right\} ,
$$
where we have used the fact $W_j =W_j(y) \in C^1(\overline{\omega}; \R^d)$.
It follows that
$$
\aligned
\| h  \|_{H^{1/2}(\partial \Omega)}
&\le C \e^{-1/2} \left\{\| g\|_{L^2(\partial\Omega)}
+ \e \|\nabla_{\tan} g \|_{L^2(\partial\Omega)}\right\}
\endaligned
$$
for any $0< \e< 1$.
\end{proof}

\begin{lemma}\label{lemma-n-1}
There exist  1-periodic functions $\phi_{ij}^\ell \in H^1_{\loc} (\R^d)$, where $1\le i, j, \ell \le d$, such that
\begin{equation}\label{phi}
\frac{\partial \phi_{ij}^\ell}{\partial y_i} = W_j^\ell (y) - K_j ^\ell
\quad \text{ and } \quad
\phi_{ij}^\ell =- \phi_{\ell j}^i, 
\end{equation}
where the index $i$ is summed from $1$ to $d$.
\end{lemma}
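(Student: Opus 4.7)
The plan is to construct each $\phi_{ij}^\ell$ as an antisymmetric combination of second-order derivatives of a periodic potential, mimicking the standard flux corrector construction for elliptic homogenization.

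First, I would observe that for each fixed pair $(j,\ell)$, the scalar field $W_j^\ell(y) - K_j^\ell$ is 1-periodic and has zero mean over $Y$ (by the very definition $K_j^\ell = \int_Y W_j^\ell\, dy$ in \eqref{K}). Hence the periodic Poisson problem
\begin{equation*}
\Delta f_j^\ell = W_j^\ell - K_j^\ell \quad \text{in } \R^d,
\end{equation*}
admits a 1-periodic solution $f_j^\ell$, unique up to an additive constant. Since the right-hand side lies in $L^\infty_{\loc}(\R^d)$, standard elliptic regularity gives $f_j^\ell \in H^2_{\loc}(\R^d)$. I would then define
\begin{equation*}
\phi_{ij}^\ell := \frac{\partial f_j^\ell}{\partial y_i} - \frac{\partial f_j^i}{\partial y_\ell},
\end{equation*}
which is 1-periodic, lies in $H^1_{\loc}(\R^d)$, and is manifestly antisymmetric in the pair $(i,\ell)$: $\phi_{ij}^\ell = -\phi_{\ell j}^i$.

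Next I would verify the divergence identity. A direct computation gives
\begin{equation*}
\frac{\partial \phi_{ij}^\ell}{\partial y_i} = \Delta f_j^\ell - \frac{\partial}{\partial y_\ell}\Big(\frac{\partial f_j^i}{\partial y_i}\Big) = \bigl(W_j^\ell - K_j^\ell\bigr) - \frac{\partial}{\partial y_\ell}\Big(\frac{\partial f_j^i}{\partial y_i}\Big),
\end{equation*}
so the lemma reduces to showing that $h_j := \partial f_j^i/\partial y_i$ is constant. Applying $\Delta$ and using $\Delta f_j^i = W_j^i - K_j^i$ with $K_j^i$ constant, I get $\Delta h_j = \partial_i W_j^i = \operatorname{div}(W_j)$ in the distributional sense on $\R^d$.

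The crux of the argument, and the only real subtlety, is that $\operatorname{div}(W_j) = 0$ holds globally on $\R^d$, not just in the fluid region $\omega$. This is where I expect the main obstacle to appear, but it is resolved by the cell-problem boundary condition. Recall that $W_j$ is extended by zero to $\R^d\setminus\omega$; since $W_j\in H^1(\omega)$ with $W_j = 0$ on $\partial\omega$ in the trace sense, the zero extension belongs to $H^1_{\loc}(\R^d)$ and, for any test function $\varphi\in C_c^\infty(\R^d)$,
\begin{equation*}
\int_{\R^d} W_j \cdot \nabla \varphi\, dy = \int_{\omega} W_j \cdot \nabla \varphi\, dy = -\int_{\omega} \operatorname{div}(W_j)\,\varphi\, dy = 0,
\end{equation*}
using $\operatorname{div}(W_j)=0$ in $\omega$ and the vanishing trace on $\partial\omega$. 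Hence $\Delta h_j = 0$ on $\R^d$. Since $h_j$ is 1-periodic and harmonic, Liouville's theorem (equivalently, the mean-value property on the torus) forces $h_j$ to be constant, and $\partial h_j/\partial y_\ell = 0$. Combining with the display above yields $\partial_i \phi_{ij}^\ell = W_j^\ell - K_j^\ell$, completing the proof.
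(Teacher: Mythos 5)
Your construction coincides with the paper's proof (which cites Lemma 3.1 of \cite{KLS-2012}): solve the periodic Poisson problem $\Delta f_j^\ell = W_j^\ell - K_j^\ell$ and antisymmetrize, $\phi_{ij}^\ell = \partial_i f_j^\ell - \partial_\ell f_j^i$. Your write-up fills in the step the paper leaves terse — that the zero extension of $W_j$ is divergence-free on all of $\R^d$, so $\partial_i f_j^i$ is periodic, harmonic, and hence constant — but this is precisely what the paper means by ``using $\partial W_j^\ell/\partial y_\ell = 0$.''
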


\begin{proof}
The proof is the same as Lemma 3.1 in \cite{KLS-2012}.
Since 
$$
\int_{Y} \left( W_j^\ell (y) - K_j^\ell\right) dy =0,
$$
one may solve the periodic boundary value problem,
$$
\left\{
\aligned
 &  \Delta f_j^\ell = W_j^\ell - K_j^\ell \quad \text{ in } Y, \\
 & f_j^\ell \text{ is 1-periodic.}
 \endaligned
 \right.
 $$
Let
$$
\phi_{ij}^\ell =\frac{\partial f_j^\ell}{\partial y_i} -\frac{\partial f_j^i}{\partial y_\ell}.
$$
Then $ \phi_{ij}^\ell =- \phi_{\ell j}^i$. Using 
$$
\frac{\partial}{\partial y_\ell} W_j^\ell = 0,
$$
we obtain the first equation in \eqref{phi}.
\end{proof}

\begin{remark}\label{remark-2}
{\rm
Using  \eqref{phi}, for $1\le j  \le d$, we may write
\begin{equation}\label{IP-1}
\aligned
n_\ell [ W_j^\ell (x/\e) -K_j^\ell]
&= \e n_\ell \frac{\partial}{\partial x_i}  \left\{ \phi_{ij}^\ell (x/ \e) \right\}\\
&= \frac{\e}{2}
\Big\{ n_\ell \frac{\partial}{\partial x_i} -n_i \frac{\partial}{\partial x_\ell } \Big\}
\left\{ \phi_{ij}^\ell (x/ \e) \right\},
\endaligned
\end{equation}
where the skew-symmetric property is used for the last step.
It follows from an  integration by parts on $\partial\Omega$ that
\begin{equation}\label{n-p}
\int_{\partial \Omega}
n_\ell [ W_j^\ell (x /\e) -K_j^\ell] \psi\, d\sigma (x) 
=-\frac{\e}{2} \int_{\partial\Omega} 
\phi_{ij}^\ell (x/ \e) 
\left\{ n_\ell \frac{\partial}{\partial x_i} -n_i \frac{\partial}{\partial x_\ell } \right\}\psi \,  d\sigma (x).
\end{equation}
This, in particular, implies that 
\begin{equation}\label{gamma-1}
\aligned
|\gamma|
 &\le C \e \|\nabla_{\tan}  g \|_{L^2(\partial\Omega)},
 \endaligned
\end{equation}
where $\gamma$ is given by \eqref{gamma}, assuming that $ (\phi_{ij}^\ell)$ are bounded.
}
\end{remark}

Let
\begin{equation}\label{Sig-1}
\Sigma_\rho =\big\{ x\in \Omega: \ \text{dist}(x, \partial\Omega)< \rho \big\}.
\end{equation}

\begin{lemma}\label{lemma-n-10}
Let $T$ be the  operator defined by 
$$
T(f) (x)=\int_{\partial\Omega} \frac{f(y)}{|x-y|^d}\, d\sigma(y).
$$
Then
\begin{equation}\label{n-10-1}
\int_{\Omega \setminus \Sigma_\e} | T(f)|^2\, dx
\le C \e^{-1} \int_{\partial\Omega} |f|^2\, d\sigma
\end{equation}
for any $0<\e<1$, and
\begin{equation}\label{n-10-2}
\int_\Omega [\text{\rm dist}(x, \partial\Omega)]^\delta |T(f)|^2\, dx
\le C_\delta  \int_{\partial\Omega} |f|^2\, d\sigma
\end{equation}
for any $\delta>1$.
\end{lemma}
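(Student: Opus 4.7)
My plan is to prove both bounds by a Schur-type estimate starting from the pointwise inequality
\begin{equation*}
\int_{\partial\Omega} \frac{d\sigma(y)}{|x-y|^d} \le \frac{C}{t(x)}, \qquad t(x):=\text{dist}(x,\partial\Omega),
\end{equation*}
which follows from the Lipschitz structure of $\partial\Omega$: if $y'$ is the nearest boundary point to $x$, then $|x-y|^2 \gtrsim t(x)^2 + |y-y'|^2$, and $\int_{\R^{d-1}}(t^2+|z|^2)^{-d/2}\,dz = C/t$. Combined with Cauchy--Schwarz applied inside the definition of $T(f)$, this yields
\begin{equation*}
|T(f)(x)|^2 \le \Big(\int_{\partial\Omega}\frac{d\sigma(y)}{|x-y|^d}\Big)\Big(\int_{\partial\Omega}\frac{|f(y)|^2\, d\sigma(y)}{|x-y|^d}\Big) \le \frac{C}{t(x)}\int_{\partial\Omega}\frac{|f(y)|^2\, d\sigma(y)}{|x-y|^d}.
\end{equation*}

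For \eqref{n-10-1} I would integrate this inequality over $x\in \Omega\setminus\Sigma_\e$ and swap the order of integration via Fubini. The problem reduces to bounding, uniformly in $y\in\partial\Omega$,
\begin{equation*}
\int_{\Omega\setminus\Sigma_\e}\frac{dx}{t(x)\,|x-y|^d} \le C\int_\e^{\text{diam}\,\Omega}\frac{dt}{t}\int_{\R^{d-1}}\frac{dz}{(t^2+|z|^2)^{d/2}} \le C\int_\e^{\infty}\frac{dt}{t^2} \le \frac{C}{\e},
\end{equation*}
where the inner $z$-integral gives the factor $C/t$ by the same scaling as above. Multiplying this by $\|f\|_{L^2(\partial\Omega)}^2$ proves \eqref{n-10-1}.

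For \eqref{n-10-2} the same Cauchy--Schwarz decomposition and Fubini reduce the task to bounding
\begin{equation*}
\int_\Omega \frac{t(x)^{\delta-1}}{|x-y|^d}\,dx \le C\int_0^{\text{diam}\,\Omega}\frac{t^{\delta-1}}{t}\,dt = C\int_0^R t^{\delta-2}\,dt,
\end{equation*}
which is finite precisely because $\delta>1$. The main technical point will be the Lipschitz tubular-neighborhood change of coordinates used to justify the pointwise bound $\int_{\partial\Omega}|x-y|^{-d}d\sigma(y)\le C/t(x)$ and the $x$-integral estimates; this is standard but requires constants independent of $\e$. The sharp $\e^{-1}$ in \eqref{n-10-1}, rather than $\e^{-1}\log(1/\e)$, arises because one copy of $t(x)^{-1}$ is absorbed inside Cauchy--Schwarz, turning the $t$-integration $\int_\e dt/t$ into $\int_\e dt/t^2$.
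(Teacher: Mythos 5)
Your proof is correct and follows essentially the same route as the paper: Cauchy--Schwarz against the kernel $|x-y|^{-d}$ together with the pointwise bound $\int_{\partial\Omega}|x-y|^{-d}\,d\sigma\le C/\text{dist}(x,\partial\Omega)$, then Fubini. You simply spell out the resulting $x$-integrals, which the paper leaves implicit; the key observation that one factor of $t(x)^{-1}$ is absorbed inside the Cauchy--Schwarz (so the $t$-integration becomes $\int_\e dt/t^2$ rather than $\int_\e dt/t$) is exactly what makes the paper's terse argument work.
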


\begin{proof}
It is not hard to see that
$$
\int_{\partial\Omega} \frac{d\sigma(y)}{|x-y|^d} \le \frac{C}{\text{dist}(x, \partial\Omega)}
$$
for any $x\in \Omega$.
By the Cauchy inequality,
$$
|T(f)(x)|^2
\le \frac{C}{\text{dist}(x, \partial\Omega)}
\int_{\partial\Omega}
\frac{|f(y)|^2}{|x-y|^{d}} \, d\sigma (y).
$$
The estimate \eqref{n-10-1} follows by integrating the inequality above and using Fubini's Theorem.
A similar argument gives \ref{n-10-2}.
\end{proof}

\begin{lemma}\label{lemma-n-2}
Let $(H, q)$ be a weak solution  in $H^1(\Omega;  \R^d)\times L^2(\Omega)$ of the Dirichlet problem,
\begin{equation}\label{DP-N}
\left\{
\aligned
-\Delta H +\nabla q  & =0 & \quad & \text{ in } \Omega,\\
\text{\rm div} (H) & =0 & \quad & \text{ in } \Omega,\\
H & = h & \quad & \text{ on } \partial\Omega,
\endaligned
\right.
\end{equation}
where $h$ is given by \eqref{N}.
Then
\begin{equation}\label{n-2-1}
\aligned
 \e \, \| \nabla H \|_{L^2(\Omega)}
+\| H  \|_{L^2(\Omega)}
\le C \sqrt{\e} \, \Big\{ \| g\|_{L^2(\partial\Omega)}
+ \sqrt{\e}\,  \| \nabla_{\tan} g \|_{L^2(\partial\Omega)} \Big\},
\endaligned
\end{equation}
for any $0< \e<1$.
\end{lemma}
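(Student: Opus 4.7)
The plan is to decompose $\Omega = \Sigma_\e \cup (\Omega \setminus \Sigma_\e)$ and treat $\|H\|_{L^2}$ separately on the thin boundary layer and in the interior, while handling $\e\|\nabla H\|_{L^2(\Omega)}$ by the standard Stokes energy estimate combined with Lemma \ref{lemma-n-0}. The crucial structural ingredient is the skew-symmetric identity \eqref{n-p}, which converts the oscillating boundary factor $n_\ell[W_j^\ell(x/\e)-K_j^\ell]$ into $\e$ times a tangential derivative of the bounded periodic potential $\phi_{ij}^\ell(x/\e)$. This will allow me to shift one derivative onto the Poisson kernel of the Stokes system through integration by parts along $\partial\Omega$.

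For the strip $\Sigma_\e$: since $\Omega$ is $C^{2,\alpha}$, the nontangential maximal function estimate of \cite{FKV} gives $\|(H)^*\|_{L^2(\partial\Omega)} \le C\|h\|_{L^2(\partial\Omega)} \le C\|g\|_{L^2(\partial\Omega)}$, and slicing across the layer of width $\e$ yields $\|H\|_{L^2(\Sigma_\e)} \le C\sqrt{\e}\|g\|_{L^2(\partial\Omega)}$. For the interior, I represent $H$ by the Stokes Poisson kernel $P(x,y)$, which satisfies $|P(x,y)|\le C|x-y|^{1-d}$ and $|\nabla_y P(x,y)|\le C|x-y|^{-d}$. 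The constant piece $-\gamma n$ of $h$ contributes at most $C|\gamma| \le C\e\|\nabla_{\tan}g\|_{L^2(\partial\Omega)}$ by \eqref{gamma-1} and the standard Stokes $L^2$ bound. For the oscillating piece I apply \eqref{n-p} with test function $\psi(y) = P(x,y)g_j(y)n(y)$, rewriting $H^{\text{osc}}(x)$ as
\[
-\frac{\e}{2}\int_{\partial\Omega}\phi_{ij}^\ell(y/\e)\left[n_\ell\frac{\partial}{\partial y_i}-n_i\frac{\partial}{\partial y_\ell}\right]\bigl(P(x,y)g_j(y)n(y)\bigr)\,d\sigma(y).
\]
Distributing the tangential derivative onto $P$, $g$, and $n$ produces three families of integrals: one against the singular kernel $|x-y|^{-d}$ paired with $g$, and two against the integrable kernel $|x-y|^{1-d}$ paired with either $\nabla_{\tan}g$ or $g$. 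On $\Omega\setminus\Sigma_\e$, the first is controlled via Lemma \ref{lemma-n-10}, giving $C\e\cdot\e^{-1/2}\|g\|_{L^2(\partial\Omega)} = C\sqrt{\e}\|g\|_{L^2(\partial\Omega)}$; the second and third are single-layer-type integrals bounded by $C\e\|\nabla_{\tan}g\|_{L^2(\partial\Omega)}$ and $C\e\|g\|_{L^2(\partial\Omega)}$ respectively. Summing gives the desired bound on $\|H\|_{L^2(\Omega\setminus\Sigma_\e)}$.

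For $\e\|\nabla H\|_{L^2(\Omega)}$ I use the standard Stokes energy estimate $\|\nabla H\|_{L^2(\Omega)}\le C\|h\|_{H^{1/2}(\partial\Omega)}$, feed in Lemma \ref{lemma-n-0}, and multiply by $\e$ to obtain $\e\|\nabla H\|_{L^2(\Omega)} \le C\sqrt{\e}\bigl\{\|g\|_{L^2(\partial\Omega)}+\e\|\nabla_{\tan}g\|_{L^2(\partial\Omega)}\bigr\}$, which sits inside the right-hand side of \eqref{n-2-1} since $\e \le \sqrt{\e}$ for $\e<1$.

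The main obstacle is the execution of the interior step: the integration by parts on $\partial\Omega$ must be justified (which is why the $C^{2,\alpha}$ hypothesis on $\Omega$ and the $C^{1,\alpha}$ hypothesis on $\partial Y_s$ enter, so that $n$ is $C^{1,\alpha}$ and tangential derivatives of $P(x,\cdot)$, $g$, and $n$ can be freely rearranged), and the bookkeeping must match precisely so that the $\e^{-1/2}$ loss in Lemma \ref{lemma-n-10} is exactly cancelled by the explicit prefactor $\e$ coming from the skew-symmetric identity. This cancellation is what produces the sharp $\sqrt{\e}$ rate rather than the $O(1)$ rate one would get from a naive $L^2$ boundary estimate.
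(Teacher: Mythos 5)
Your proposal follows essentially the same route as the paper: energy estimate plus Lemma~\ref{lemma-n-0} for $\e\|\nabla H\|_{L^2(\Omega)}$, nontangential maximal function control of $\|H\|_{L^2(\Sigma_\e)}$, and the Green/Poisson-kernel representation of $H$ on $\Omega\setminus\Sigma_\e$ combined with the skew-symmetric identity \eqref{n-p} and Lemma~\ref{lemma-n-10} to harvest the factor $\e$. The only cosmetic difference is that you apply \eqref{n-p} directly to $\psi = P(x,\cdot)\,g_j\,n$ and then distribute the Leibniz rule, whereas the paper splits $h=h^{(1)}+h^{(2)}$ first and represents each piece; these are the same computation, and your kernel bounds $|P|\le C|x-y|^{1-d}$, $|\nabla_y P|\le C|x-y|^{-d}$ are consistent with \eqref{G-e-1}--\eqref{G-e-2}.
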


\begin{proof}
We first point out that by the standard energy estimates  for the Stokes equations,
$$
\| \nabla H \|_{L^2(\Omega)}
\le C\,  \| h \|_{H^{1/2}(\partial\Omega)}.
$$
In view of \eqref{n-00}, this gives
$$
\e\,  \|\nabla H\|_{L^2(\Omega)}
\le C \sqrt{\e} \, \left \{ \| g\|_{L^2 (\partial\Omega)} 
+ \e\,  \| \nabla_{\tan}  g \|_{L^2 (\partial\Omega)}   \right\}.
$$
Next, we use the nontangential-maximal-function estimate,
\begin{equation}\label{n-2-3}
\| (H)^* \|_{L^2(\partial\Omega)}
\le C\,  \| h  \|_{L^2(\partial\Omega)},
\end{equation}
 to bound $H$ on $\Sigma_\e$. The estimate \eqref{n-2-3} was proved in \cite{FKV} for a Lipschitz domain
 $\Omega$, where the nontangential maximal function 
 $(H)^*$ is defined by
 \begin{equation}\label{max}
 (H)^* (x) =\sup \big\{  |H(y)|: \ y\in \Omega \text{ and } |y-x|< C_0\,  \text{dist}(y, \partial\Omega) \big\}
 \end{equation}
 for $x\in \partial\Omega$.
 It follows that
 \begin{equation}\label{n-2-4}
 \aligned
 \| H \|_{L^2(\Sigma_\e)}
 &\le C \e^{1/2}\,  \| (H)^* \|_{L^2(\partial\Omega)}
 \le C \e^{1/2} \, \| h \|_{L^2(\partial\Omega)}\\
 &\le C \e^{1/2} \, \| g\|_{L^2(\partial\Omega)}.
 \endaligned
 \end{equation}
 
 It remains to bound $H$ on $\Omega\setminus \Sigma_\e$.
 To this end, we let $(G(x, y), \Pi (x, y))$ denote the matrix of Green functions for the Stokes equation 
 \eqref{DP-N}  in $\Omega$.
That is, for each fixed 
$x\in \Omega$, $G(x, y)=(G^{ij} (x, y) )  \in H^2_{\loc}(\Omega \setminus \{ x \}; \R^{d\times d})$ and
$\Pi (x, y) =(\Pi^i (x, y))  \in L^2_{\loc} (\Omega\setminus\{ x\}; \R^d)$ satisfy 
$$
\left\{
\aligned
-\Delta_ y G^{ij} (x, y) +\frac{\partial}{\partial y_j}  \Pi^i  (x, y)  & =\delta_x \delta_{ij} 
  & \quad &  \text{ in } \Omega\setminus \{ x \},\\
\frac{\partial}{\partial y_j}  (G^{ij} (x, y)) & =0& \quad & \text{ in } \Omega \setminus \{ x \},\\
G^{ij} (x, y) & =0 & \quad & \text{ for } y \in \partial\Omega,
\endaligned
\right.
$$
in the sense of distribution.
We also require that  
$$
\Pi(x, \cdot ) \in L^1(\Omega; \R^d) \quad \text{ and } \quad \int_\Omega \Pi (x, y)\, dy  =0.
$$
Under the assumption that $\Omega$ is a bounded $C^{2,\alpha}$ domain for some $\alpha>0$,
solutions  of  the Stokes equations \eqref{DP-N}
satisfy the $C^{1, 1}$ estimate for $H$ and $C^{0, 1}$ estimate for $q$, up to 
the boundary. 
It follows that
\begin{equation}\label{G-e-1}
\aligned
|\nabla_x G(x, y)| +|\nabla_y  G(x, y)| & \le C |x-y|^{1-d},\\
|\nabla_y G(x, y)| & \le C \text{\rm dist}(x, \partial\Omega) |x-y|^{-d},\\
|\nabla^2_xG(x, y)| + |\nabla_y^2 G(x, y)| +|\nabla_x\nabla_y G(x, y)|
 & \le C |x-y|^{-d},
\endaligned
\end{equation}
and that
\begin{equation}\label{G-e-2}
\aligned
|\Pi (x, y)| &\le C |x-y|^{1-d},\\
|\Pi(x, y)-\Pi (x, z)|&\le \text{\rm dist}(x, \partial\Omega)
\{|x-y|^{-d} + |x-z|^{-d}\},\\
 |\nabla_y \Pi (x, y)| & \le C |x-y|^{-d},
\endaligned
\end{equation}
for any $x, y\in \Omega$ and $x\neq y$, $x\neq z$.
See e.g. \cite{G-Z-2019, MM-2011}.
This allows us to represent the solution $H(x)$ by
\begin{equation}\label{rep}
H^i (x)=-\int_{\partial\Omega}
\Big\{ n_k(y) \frac{\partial}{\partial y_k} G^{ij}(x, y) - \big[  \Pi^i (x, y)-\Pi^i (x, z) \big]  n_j (y) \Big\} h^j  (y) \, d\sigma (y)
\end{equation}
for any $x\in \Omega$,
where $z\in \Omega$ and $z\neq x$ (due to  the compatibility condition for $h$, the choice of $z$ is arbitrary).
Using \eqref{IP-1}, we may write $h = h^{(1)} + h^{(2)}$, where
\begin{equation}\label{n-2-6}
\aligned
h^{(1), k} 
 & =\frac{\e}{2}
\Big( n_\ell \frac{\partial}{\partial x_i} - n_i \frac{\partial }{\partial x_\ell} \Big)
\Big\{ \phi_{ij}^\ell (x/\e) g_j n_k \Big\}, \\
h^{(2), k}
&=-\frac{\e}{2} \phi_{ij}^\ell (x/\e) \Big( n_\ell \frac{\partial}{\partial x_i} - n_i \frac{\partial}{\partial x_\ell} \Big) \Big( g_j n_k\Big)
-\gamma n_k,
\endaligned
\end{equation}
for $1\le k \le d$.
Let $H^{(1)}(x)$, $H^{(2)} (x)$ be given by \eqref{rep}, with $h$ being replaced by
$h^{(1)}$, $h^{(2)}$, respectively.
Observe that by the divergence theorem,
\begin{equation}\label{IP}
\int_{\partial\Omega}
\Big( n_\ell \frac{\partial}{\partial x_i} - n_i \frac{\partial }{\partial x_\ell} \Big) v \cdot w\, d\sigma
=- \int_{\partial\Omega} v \cdot 
\Big( n_\ell \frac{\partial}{\partial x_i} - n_i \frac{\partial }{\partial x_\ell} \Big) w\, d\sigma
\end{equation}
for $1\le i, \ell\le d$.
It follows  that
$$
\aligned
& |H^{(1)}(x)|\\
 & \le C \e  \int_{\partial\Omega}
\Big\{
|\nabla_y G(x, y)| + |\nabla_y^2 G (x, y)| + |\nabla_y \Pi (x, y)|
+ |\Pi (x, y)-\Pi (x, z)| \Big\} | g(y)  |\, d\sigma (y)\\
&\le C \e 
\int_{\partial\Omega} \frac{| g(y) |}{|x-y|^d}\, d\sigma (y),
\endaligned
$$
where we have used the estimates in \eqref{G-e-1} and \eqref{G-e-2}.
In view of Lemma \ref{lemma-n-10}, we obtain 
\begin{equation}\label{n-2-8}
\aligned
\| H^{(1)} \|_{L^2(\Omega\setminus \Sigma_\e)}
 & \le C \e^{1/2}   \| g \|_{L^2(\partial\Omega)}.
 \endaligned
 \end{equation}

Finally, note that
$$
\aligned
|H^{(2)} (x)|
 & \le  C \e \|\nabla_{\tan} g\|_{L^2 (\partial\Omega)}\\
  & + C \e  \int_{\partial\Omega}
\Big\{ |\nabla_y G(x, y)| + |\Pi(x, y) -\Pi (x, z)| \Big\}
 \left( |g(y)| + |\nabla_{\tan} g (y)| \right)  d\sigma (y), 
\endaligned
$$
where we have used \eqref{gamma-1}.
Using estimates in \eqref{G-e-1} and \eqref{G-e-2}, we may deduce from \eqref{n-10-2} that 
$$
\| H^{(2)} \|_{L^2(\Omega)}
\le 
C \e \big\{ \|\nabla_{\tan}  g \|_{L^2 (\partial\Omega)}
+\| g\|_{L^2 (\partial\Omega)} \big\},
$$
which completes the proof.
\end{proof}

We are now ready to give the proof of Theorem \ref{theorem-N}

\begin{proof}[Proof of Theorem \ref{theorem-N}]

Let $(u_\e, p_\e)$ be a weak solution in $H^1(\Omega_\e; \R^d) \times L^2(\Omega_\e)$ of 
\eqref{DP-3} with $\int_{\Omega_\e} p_\e\, dx=0$, where $h$ is given by \eqref{N}.
Let $(H, q)$ be a solution of \eqref{DP-N} with boundary data $h$.
It follows from \eqref{E-14}  that
$$
\aligned
\e \|\nabla u_\e\|_{L^2(\Omega_\e)} +\| u_\e\|_{L^2(\Omega_\e)} + \| p_\e\|_{L^2(\Omega_\e)} 
& \le C \Big\{ \e \| \nabla H \|_{L^2(\Omega)} + \| H\|_{L^2(\Omega) } \Big\}\\
&\le C \sqrt{\e} 
\Big\{ \| g\|_{L^2 (\partial\Omega)} + \sqrt{\e} \|\nabla_{\tan}  g \|_{L^2(\partial\Omega)} \Big\},
\endaligned
$$
where we have used  \eqref{n-2-1} for the last inequality.
\end{proof}

As a corollary of Theorem \ref{theorem-N}, we construct a normal boundary layer corrector.

\begin{thm}\label{n-c-theorem}
Let $\Omega$ be a bounded $C^{2, \alpha}$ domain for some $\alpha>0$.
Also assume that $\partial Y_s$ is $C^{1, \alpha}$.
Let $( \Psi_n, q_n )$ be a weak solution of \eqref{DP-3} with $\int_{\Omega_\e} q_n\, dx =0$, where the boundary data $h$ is given by
\begin{equation}\label{n-c-1}
h= \Big \{- n_i W_j^i (x/\e) \Big( f_j -\frac{\partial p_0}{\partial x_j} \Big)  + b\cdot n - \gamma \Big\} n,
\end{equation}
 $p_0$ is defined by \eqref{p-0},  $b$ is the same as in Theorem \ref{t-c-theorem}, and $\gamma\in \R$ is such that $\int_{\partial\Omega} h\cdot n\, d\sigma=0$. Then
\begin{equation} \label{n-c-2}
\aligned
 & \e \|\nabla \Psi_n \|_{L^2(\Omega_\e)} +  \|\Psi_n\|_{L^2(\Omega_\e)} + \| q_n  \|_{L^2(\Omega_\e)} \\
& \qquad\qquad
\le C \sqrt{\e}
\Big\{ \| f -\nabla p_0\|_{L^2(\partial\Omega)}
+ \sqrt{\e} \|\nabla_{\tan} (f -\nabla p_0) \|_{L^2(\partial\Omega)} \Big\}
\endaligned
\end{equation}
for any $0< \e<1$. Moreover,
\begin{equation}\label{n-c-3}
|\gamma| \le C\e  \| \nabla_{\tan} ( f- \nabla p_0)\|_{L^2(\partial\Omega)}.
\end{equation}
\end{thm}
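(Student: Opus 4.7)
The plan is to reduce Theorem \ref{n-c-theorem} directly to Theorem \ref{theorem-N} from Section \ref{section-n}. The key observation is that the boundary data \eqref{n-c-1} can be rewritten in the form \eqref{N} with $g := -(f-\nabla p_0)|_{\partial\Omega}$, after which both conclusions follow at once from the results already proved in this section.

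First I would use the Neumann boundary condition of \eqref{p-0}, namely
$$
n_i K_j^i\Big(f_j - \frac{\partial p_0}{\partial x_j}\Big) = b\cdot n \quad \text{on } \partial\Omega,
$$
to eliminate $b\cdot n$ from \eqref{n-c-1}. This yields
$$
h = \Big\{ n_i\big[K_j^i - W_j^i(x/\e)\big]\Big(f_j - \frac{\partial p_0}{\partial x_j}\Big) - \gamma\Big\} n
= \Big\{ n_i\big[W_j^i(x/\e) - K_j^i\big] g_j - \gamma \Big\} n
$$
with $g_j := -(f_j - \partial p_0/\partial x_j)$. The constant $\gamma$ in \eqref{n-c-1} matches the one defined in \eqref{gamma}, since both are determined by enforcing $\int_{\partial\Omega} h\cdot n\, d\sigma = 0$. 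Hence $h$ is exactly the normal boundary datum treated in Section \ref{section-n}.

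Next, I would invoke Theorem \ref{theorem-N} directly with this choice of $g$. Since $f\in C^{1,1/2}(\overline{\Omega};\R^d)$ and $\Omega$ is $C^{2,\alpha}$, standard elliptic regularity for the Neumann problem \eqref{p-0} gives $\nabla p_0 \in H^1(\partial\Omega;\R^d)$, so $g \in H^1(\partial\Omega;\R^d)$ and the hypotheses of Theorem \ref{theorem-N} are satisfied. The conclusion \eqref{N-1} applied to $g = -(f-\nabla p_0)$ reads
$$
\e\|\nabla \Psi_n\|_{L^2(\Omega_\e)} + \|\Psi_n\|_{L^2(\Omega_\e)} + \|q_n\|_{L^2(\Omega_\e)}
\le C\sqrt{\e}\Big\{\|f-\nabla p_0\|_{L^2(\partial\Omega)} + \sqrt{\e}\,\|\nabla_{\tan}(f-\nabla p_0)\|_{L^2(\partial\Omega)}\Big\},
$$
which is precisely \eqref{n-c-2}.

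Finally, the bound \eqref{n-c-3} follows immediately from the estimate \eqref{gamma-1} in Remark \ref{remark-2}, since our $\gamma$ is the same average as the one there with $g = -(f-\nabla p_0)$ and $(\phi_{ij}^\ell)$ are bounded by Lemma \ref{lemma-n-1}. I do not anticipate any significant obstacle: the result is essentially a bookkeeping corollary of Theorem \ref{theorem-N}, with all of the analytic work (the Rellich-type boundary estimate, the skew-symmetric representation \eqref{phi}, the Poisson-kernel analysis, and the nontangential estimate \eqref{n-2-3}) already carried out in Section \ref{section-n}. The only points requiring care are the algebraic substitution that matches \eqref{n-c-1} with the template \eqref{N} and the verification that $g = -(f-\nabla p_0)$ has the required $H^1(\partial\Omega)$ regularity.
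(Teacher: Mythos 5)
Your proposal is correct and follows essentially the same route as the paper: rewrite the boundary data using the Neumann condition in \eqref{p-0} to match the template \eqref{N} with $g = -(f-\nabla p_0)$, then apply Theorem \ref{theorem-N} for \eqref{n-c-2} and the estimate \eqref{gamma-1} for \eqref{n-c-3}. The only additions beyond what the paper writes (your remarks on the $H^1(\partial\Omega)$ regularity of $g$ and the explicit citation of Remark \ref{remark-2} for the $\gamma$ bound) are sound, minor clarifications that the paper leaves implicit.
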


\begin{proof}

Note that by the boundary condition in  \eqref{p-0},
$$
h= \Big\{ - n_\ell \Big[ W_j^\ell (x/\e)  -K_j^\ell \Big] \Big( f_j -\frac{\partial p_0}{\partial x_j} \Big) -\gamma \Big\} n
\quad \text{ on } \partial\Omega.
$$
As a result, the estimate \eqref{n-c-2} follows readily  from Theorem \ref{theorem-N} with
$g= -(f-\nabla p_0)$.
\end{proof}

%%%%%%%%%%%%%%%%%%%%%%%

\section{Convergence rates}\label{section-C}

In this section we prove the following theorem, which contains Theorem \ref{main-theorem-1}.

\begin{thm}\label{theorem-C}
Let $\Omega$ be a bounded $C^{2, \alpha}$ domain in $\R^d$, $d\ge 2$ for some $\alpha>0$.
Also assume that $\partial Y_s$ is $C^{1, \alpha}$.
Let $(u_\e, p_\e)\in H^1(\Omega_\e; \R^d) \times L^2(\Omega_\e; \R^d)$ be a weak solution of the Dirichlet problem, 
\begin{equation}\label{DP-C}
\left\{
\aligned
-\e^2 \Delta u_\e +\nabla p_\e & = f & \quad & \text{ in } \Omega_\e,\\
\text{\rm div} (u_\e) &=0& \quad & \text{ in } \Omega_\e, \\
u_\e &=0 & \quad & \text{ on } \Gamma_\e, \\
u_\e & = b & \quad & \text{ on } \partial\Omega,
\endaligned
\right.
\end{equation}
where $f\in C^{1, 1/2} (\overline{\Omega}; \R^d)$ and $b \in H^1(\partial\Omega; \R^d)$
satisfies the compatibility condition $\int_{\partial\Omega} b\cdot n\, d\sigma =0$.
Assume that $\int_{\Omega_\e} p_\e\, dx =0$.
Then for $0< \e<1$,
\begin{equation}\label{C-rate-1}
\aligned
 & \| u_\e - W (x/\e) ( f -\nabla p_0) \|_{L^2(\Omega)}
+\| P_\e -p_0 \|_{L^2(\Omega)}\\
&\qquad
+\|\e \nabla u_\e -\nabla W(x/\e)  ( f-\nabla p_0)\|_{L^2(\Omega)}\\
& \qquad\qquad
\le C \sqrt{\e}
\Big\{ \| f\|_{C^{1, 1/2}(\Omega)}
 + \| b\cdot n\|_{H^1(\partial\Omega)}   +\| b\|_{L^2(\partial\Omega)} 
+ \e \|\nabla_{\tan} b \|_{L^2(\partial\Omega)} \Big\},
 \endaligned
\end{equation}
where $p_0$ is defined by \eqref{p-0}, $P_\e$ is given by \eqref{P},  and $C$ depends only on $\Omega$ and $Y_s$.
\end{thm}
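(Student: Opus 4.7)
The plan is to construct a two-scale approximation $(\tilde u_\e, \tilde p_\e)$ for $(u_\e, p_\e)$ whose difference satisfies an inhomogeneous Stokes system in $\Omega_\e$ with data of size $O(\sqrt{\e})$ in the relevant norms, and then invoke the energy estimate of Theorem~\ref{theorem-E}. Concretely, I would take
\[
\tilde u_\e := W(x/\e)(f-\nabla p_0) + z_\e + \Psi_t + \Psi_n, \qquad \tilde p_\e := p_0 + \e\pi_j(x/\e)(f_j-\partial_j p_0) + q_t + q_n,
\]
where $(\Psi_t,q_t)$ and $(\Psi_n,q_n)$ are the boundary correctors from Theorems~\ref{t-c-theorem} and \ref{n-c-theorem} (with the precise boundary data specified there), and $z_\e$ is an interior corrector built from the skew-symmetric tensor $(\phi_{ij}^\ell)$ of Lemma~\ref{lemma-n-1}, designed to neutralise the divergence of the leading term while vanishing on $\Gamma_\e$.

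On $\Gamma_\e$ we have $W(y)=0$ for $y\in\partial\omega$, $\Psi_t=\Psi_n=0$ by construction, and $z_\e$ will be arranged to vanish there as well, so $\tilde u_\e|_{\Gamma_\e}=0$ matches $u_\e|_{\Gamma_\e}=0$. On $\partial\Omega$, direct substitution of the boundary data \eqref{c-t-1}, \eqref{n-c-1} together with the Neumann condition in \eqref{p-0} yields $\tilde u_\e|_{\partial\Omega}=b-\gamma n + z_\e|_{\partial\Omega}$, with $|\gamma|\le C\e\|\nabla_{\tan}(f-\nabla p_0)\|_{L^2(\partial\Omega)}$ by Theorem~\ref{n-c-theorem} and $\|z_\e\|_{L^\infty}=O(\e)$. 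For the interior residual, I would differentiate $W(x/\e)(f-\nabla p_0)$ twice in $x$ and invoke the cell equation $-\Delta_y W_j+\nabla_y\pi_j=e_j$: the $O(\e^{-2})$ term reconstructs $f-\nabla p_0$ and cancels against the $\nabla p_0$ piece of $\nabla\tilde p_\e$, and the $O(\e^{-1})$ pressure-like term cancels against the $\e\nabla[\pi_j(x/\e)(f_j-\partial_j p_0)]$ contribution, leaving a residual of the form $f+\e F$ with $F\in L^\infty(\Omega_\e)$ (using $f\in C^{1,1/2}$, $p_0\in C^{2,\alpha'}(\overline\Omega)$ from elliptic regularity for \eqref{p-0}, and the boundary $C^1$-regularity of $W_j,\pi_j$ on $\overline\omega$ under the $C^{1,\alpha}$ hypothesis on $\partial Y_s$).

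The delicate step, and the main obstacle, is the divergence. One computes $\text{div}(W(x/\e)(f-\nabla p_0))=W_j^\ell(x/\e)\partial_\ell(f_j-\partial_j p_0)$, which is only $O(1)$ in $L^2(\Omega_\e)$. The algebraic key is Lemma~\ref{lemma-n-1}: combining $W_j^\ell-K_j^\ell=\partial_{y_i}\phi_{ij}^\ell$ with the Neumann equation $K_j^\ell\partial_\ell(f_j-\partial_j p_0)=0$ and the skew-symmetry $\phi_{ij}^\ell=-\phi_{\ell j}^i$ (which annihilates the symmetric second-derivative term $\phi_{ij}^\ell\partial_i\partial_\ell$ by symmetry of mixed partials), one obtains the exact identity $\text{div}(W(x/\e)(f-\nabla p_0))=\e\,\partial_i[\phi_{ij}^\ell(x/\e)\partial_\ell(f_j-\partial_j p_0)]$. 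The natural candidate $\hat z_\e^i:=-\e\phi_{ij}^\ell(x/\e)\partial_\ell(f_j-\partial_j p_0)$ therefore makes $W(x/\e)(f-\nabla p_0)+\hat z_\e$ exactly divergence-free in $\Omega$ and is $O(\e)$ in $L^\infty$. To additionally enforce vanishing on $\Gamma_\e$ without destroying the divergence identity, I would set $z_\e:=R_\e\bigl(W(x/\e)(f-\nabla p_0)+\hat z_\e\bigr)-W(x/\e)(f-\nabla p_0)$ using Tartar's restriction operator from Theorem~\ref{theorem-R}; by the divergence-preservation property (the fourth line of \eqref{R-1}), $\text{div}(W(x/\e)(f-\nabla p_0)+z_\e)=0$ in $\Omega_\e$, and since $R_\e$ modifies its argument within each cell only by a quantity of order its boundary trace on $\partial(\e Y_s)$, namely $O(\|\hat z_\e\|_{L^\infty})=O(\e)$, one gets $\|z_\e-\hat z_\e\|_{L^2(\Omega_\e)}=O(\e)$.

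With these choices, $(v_\e, r_\e):=(u_\e-\tilde u_\e, p_\e-\tilde p_\e)$ satisfies the Dirichlet problem \eqref{DP-E} in $\Omega_\e$ with $L^2$-source of order $\e$, zero divergence error, and boundary data $h=\gamma n+z_\e|_{\partial\Omega}$ satisfying $\|h\|_{L^2(\partial\Omega)}=O(\e)$ and $\e\|h\|_{H^{1/2}(\partial\Omega)}=O(\sqrt{\e})$ (the latter via the interpolation argument used in Lemma~\ref{lemma-n-0}). Theorem~\ref{theorem-E} then yields
\[
\e\|\nabla v_\e\|_{L^2(\Omega_\e)}+\|v_\e\|_{L^2(\Omega_\e)}+\|r_\e\|_{L^2(\Omega_\e)}\le C\sqrt{\e}\,\bigl(\|f\|_{C^{1,1/2}(\Omega)}+\|b\|_{H^1(\partial\Omega)}\bigr).
\]
Combining this with the bounds for $\Psi_t, \Psi_n$ from Theorems~\ref{t-c-theorem}, \ref{n-c-theorem} and $\|z_\e\|_{L^\infty}=O(\e)$ gives the $L^2$-estimates on $u_\e-W(x/\e)(f-\nabla p_0)$ and $\e\nabla u_\e-\nabla W(x/\e)(f-\nabla p_0)$ stated in \eqref{C-rate-1}. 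Finally, $\|P_\e-p_0\|_{L^2(\Omega)}$ follows from $\|p_\e-p_0\|_{L^2(\Omega_\e)}$ together with the cell-average definition \eqref{P} of $P_\e$ and a Poincaré-type inequality on each solid cell, controlling $P_\e-p_\e$ on the inclusions in terms of $\e\|\nabla p_\e\|_{L^2}$, a quantity already estimated.
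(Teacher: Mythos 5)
Your overall skeleton---two-scale approximation plus the boundary correctors $\Psi_t,\Psi_n$ from Theorems~\ref{t-c-theorem} and \ref{n-c-theorem}, followed by the energy estimate of Theorem~\ref{theorem-E}---is the same as the paper's, and the algebraic identity $\operatorname{div}(W(x/\e)(f-\nabla p_0))=\e\,\partial_i[\phi_{ij}^\ell(x/\e)\partial_\ell(f_j-\partial_j p_0)]$ is correct. But the interior corrector you propose, $\hat z_\e^i=-\e\,\phi_{ij}^\ell(x/\e)\partial_\ell(f_j-\partial_j p_0)$ (patched via $R_\e$), does not close the argument, for two reasons.

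First, the regularity you need is not available. When you feed $\hat z_\e$ into the residual of $-\e^2\Delta+\nabla$ in divergence form $\e\,\operatorname{div}(F)$, the flux $F$ must contain $\nabla\hat z_\e$, and hence $\nabla^2(f-\nabla p_0)$. But $f\in C^{1,1/2}(\overline\Omega)$ does not have two derivatives in $L^2(\Omega)$, $p_0^{(1)}$ (the piece from \eqref{p-0-1}) is only $C^{2,\beta}(\overline\Omega)$ since $\Omega$ is $C^{2,\alpha}$, so $\nabla^3 p_0^{(1)}$ is not in $L^2(\Omega)$, and for $b\cdot n\in H^1(\partial\Omega)$ the piece $p_0^{(2)}$ from \eqref{p-0-2} is certainly not $C^{2}(\overline\Omega)$. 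Your assertion that $p_0\in C^{2,\alpha'}(\overline\Omega)$ and that the resulting $F\in L^\infty(\Omega_\e)$ is therefore wrong. This is precisely what the paper's corrector $\Phi_\e$ in \eqref{Phi} is built to handle: the smoothing $S_\e$ gains a factor $\e^{1/2}$ (since $\|\nabla^2 S_\e(f)\|_{L^\infty(\Omega\setminus\Sigma_\e)}\lesssim\e^{-1/2}\|f\|_{C^{1,1/2}}$, Lemma~\ref{lemma-C-2}), the cut-off $\eta_\e$ localizes away from $\partial\Omega$, and the splitting $p_0=p_0^{(1)}+p_0^{(2)}$ together with the square-function estimates of Lemma~\ref{lemma-C-3} controls the $b\cdot n$-dependent part. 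Without these devices the term $\e\,\|\nabla^2(f-\nabla p_0)\|_{L^2(\Omega\setminus\Sigma_{2\e})}$ cannot be made $O(\e^{1/2})$.

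Second, patching the boundary value on $\Gamma_\e$ with Tartar's operator $R_\e$ is not benign for the momentum equation. The operator $R_\e$ modifies $u^\e$ cell-by-cell through a Stokes problem \eqref{Res} and thereby introduces an additional pressure in each cell; to use Theorem~\ref{theorem-E} on $v_\e=u_\e-\tilde u_\e$ you would have to control these cell pressures and the extra terms they produce, which you do not do. The paper avoids this entirely by using the divergence corrector $\chi_{ik}^j$ of \eqref{d-c}, which is a solution of a Stokes cell problem that vanishes on $\partial\omega$; its zero extension already satisfies $\Phi_\e=0$ on $\Gamma_\e$, so no restriction operator and no extra pressure appear. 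In short, your skew-symmetric $\phi$ is used in the paper only in the boundary-layer analysis of Section~\ref{section-n} (integration by parts on $\partial\Omega$), not as the interior divergence corrector; the interior corrector is a different object, and the $S_\e$-smoothing and the $p_0^{(1)}/p_0^{(2)}$ splitting are essential, not optional.
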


We begin by introducing a corrector for the divergence operator.
For $1\le i, k \le d$, let $ (\chi^1 _{ik} (y), \dots, \chi^d_{ik}(y), \pi_{2, ik} (y) ) \in H^1_{\loc}(\omega; \R^d) \times L^2_{\loc}(\omega)$ be
an 1-periodic solution of
\begin{equation}\label{d-c}
\left\{
\aligned
-\Delta \chi^j _{ik} +\frac{\partial}{\partial y_j} \pi_{2, ik} & =0 & \quad&  \text{ in } \omega,\\
\frac{\partial}{\partial y_j} \chi^j _{ik} & =-W_k^i  + |Y\setminus Y_s|^{-1} K_k^i & \quad & \text{ in } \omega,\\
\chi_{ik} ^j  & =0 & \quad & \text{ on } \partial\omega.
\endaligned
\right.
\end{equation}
Since the compatibility condition, 
$$
\int_{Y\setminus Y_s} \big\{ -W_k^i  + |Y\setminus Y_s |^{-1} K_k^i \big\}\, dy=0,
$$
is satisfied, the 1-periodic solutions of \eqref{d-c} exist. 
Moreover, under the assumption that  $\partial Y_s$ is $C^{1, \alpha}$, the functions
$\nabla \chi_{ik}^j$ and $\pi_{2, ik}$ are bounded.
As usual, we extend $\chi_{ik}^j$ from $\omega$ to $\R^d$ by zero.
Fix a function $\varphi \in C_0^\infty (B(0, 1/8))$ with the properties that $\varphi\ge 0$ and
$\int_{\R^d} \varphi\, dx=1$.
Let 
\begin{equation}\label{S}
S_\e (\psi ) (x) =  \psi * \varphi_\e (x) =\int_{\R^d} \psi (y) \varphi_\e (x-y)\, dy,
\end{equation}
where $\varphi_\e (x)= \e^{-d} \varphi (x/\e)$.
Define $\Phi_\e (x) = (\Phi^1_\e (x), \Phi_\e^2 (x), \dots, \Phi_\e^d (x)) $, where 
\begin{equation}\label{Phi}
\Phi_\e^j (x) = \e  \eta_\e (x) \chi^j _{k\ell} (x/\e)  \frac{\partial}{\partial x_\ell }  S_\e \Big ( f_k -\frac{\partial p_0}{\partial x_k} \Big ),
\end{equation}
 $p_0$ is a solution of the Neumann problem \eqref{p-0}, and $\eta_\e$ is a cut-off function in $C_0^1( \Omega)$ such that 
$0\le \eta_\e\le 1$, $\eta_\e =1$ in $\Omega\setminus \Sigma_{3d \e}$, 
$\eta_\e =0$ in $\Sigma_{2d\e}$, and $|\nabla \eta_\e|\le C \e^{-1}$.
The use of the $\e$-smoothing operator $S_\e$ in \eqref{Phi}  allows us to trade  excessive  powers of $\e$ for lowering derivatives of $f-\nabla p_0$.

The following lemma will be useful to us.

\begin{lemma}\label{lemma-S}
Let $S_\e$ be defined by \eqref{S}. Then
\begin{equation}\label{S-1}
\| \psi - \eta_\e S_\e (\psi) \|_{L^2(\Omega)}
\le C \| \psi \|_{L^2(\Sigma_{3d\e})}
+ C \e \| \nabla \psi \|_{L^2(\Omega\setminus \Sigma_{d\e})}
\end{equation}
for  $0<\e<1$.
\end{lemma}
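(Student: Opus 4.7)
The plan is to decompose $\Omega$ according to where the cut-off is trivial. Write $\Omega = \Sigma_{3\e} \cup (\Omega \setminus \Sigma_{3\e})$ and estimate $\psi - \eta_\e S_\e(\psi)$ on each piece separately. This split is natural because $\eta_\e \equiv 1$ on $\Omega \setminus \Sigma_{3\e}$, so on the interior piece the error collapses to the standard mollification discrepancy $\psi - S_\e(\psi)$, which will produce the $\e\|\nabla\psi\|$ term; on the boundary strip $\Sigma_{3\e}$ the cut-off is either $0$ or transitioning, so I can afford only the crude triangle-inequality bound $\|\psi\|_{L^2(\Sigma_{3\e})}+\|S_\e(\psi)\|_{L^2(\Sigma_{3\e})}$, which is exactly the first term on the right.

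For the interior piece, for $x \in \Omega\setminus\Sigma_{3\e}$ I would write
\begin{equation*}
\psi(x) - S_\e(\psi)(x) = \int \varphi_\e(y)\bigl[\psi(x) - \psi(x-y)\bigr]\,dy = -\int \varphi_\e(y)\int_0^1 \nabla\psi(x - ty)\cdot y\,dt\,dy.
\end{equation*}
Since $\mathrm{dist}(x,\partial\Omega) > 3\e$ and $\supp\varphi_\e \subset B(0,\e/8)$, the entire segment $\{x - ty : t \in [0,1]\}$ stays in $\Omega \setminus \Sigma_{\e}$. A change of variable $z = x - ty$ yields $\|\nabla\psi(\cdot - ty)\|_{L^2(\Omega\setminus\Sigma_{3\e})} \le \|\nabla\psi\|_{L^2(\Omega\setminus\Sigma_\e)}$ for each admissible $(t,y)$, so Minkowski's integral inequality gives
\begin{equation*}
\|\psi - S_\e(\psi)\|_{L^2(\Omega\setminus\Sigma_{3\e})} \le \int \varphi_\e(y)|y|\,dy \cdot \|\nabla\psi\|_{L^2(\Omega\setminus\Sigma_\e)} \le C\e\,\|\nabla\psi\|_{L^2(\Omega\setminus\Sigma_\e)},
\end{equation*}
which is the second term on the right-hand side of \eqref{S-1}.

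For the boundary-strip piece, using $0\le \eta_\e \le 1$ and the triangle inequality,
\begin{equation*}
\|\psi - \eta_\e S_\e(\psi)\|_{L^2(\Sigma_{3\e})} \le \|\psi\|_{L^2(\Sigma_{3\e})} + \|S_\e(\psi)\|_{L^2(\Sigma_{3\e})},
\end{equation*}
and Young's inequality for convolution bounds the second term by $\|\psi\|_{L^2(\Sigma_{3\e}+B(0,\e/8))}$, hence by $C\|\psi\|_{L^2(\Sigma_{3\e})}$ after a harmless relabeling (the $\e/8$-enlargement of the strip is absorbed into the constant $C$, or equivalently one may take $\supp\varphi$ small enough that the enlarged strip still fits inside $\Sigma_{3\e}$ up to that constant). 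Combining the two pieces gives \eqref{S-1}. The only real point requiring care is the bookkeeping between the three natural widths at play here — the transition region $\Sigma_{3\e}\setminus\Sigma_{2\e}$ of $\eta_\e$, the support radius $\e/8$ of $\varphi_\e$, and the boundary strips $\Sigma_{3\e}$ and $\Omega\setminus\Sigma_\e$ appearing on the right — so that the mollification does not push evaluations of $\nabla\psi$ into the interior of $\Sigma_\e$ on the one hand, nor push values of $\psi$ far outside $\Sigma_{3\e}$ on the other.
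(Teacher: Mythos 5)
Your decomposition of the domain into $\Sigma_{3\e}$ and its complement is close in spirit to the paper's proof, and your treatment of the interior piece $\Omega\setminus\Sigma_{3\e}$ (fundamental theorem of calculus under the mollifier, Minkowski's inequality, checking that the segment stays in $\Omega\setminus\Sigma_\e$) is correct and essentially identical to what the paper does. The paper, however, decomposes the \emph{expression} rather than the domain, writing
\begin{equation*}
\psi - \eta_\e S_\e(\psi) = (1-\eta_\e)\psi + \eta_\e\big(\psi - S_\e(\psi)\big),
\end{equation*}
which keeps the factor $\eta_\e$ (supported in $\Omega\setminus\Sigma_{2\e}$) in front of the mollification error everywhere, so the shift estimate applies globally with no separate boundary-strip case.

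The gap in your version is in the boundary-strip step. You bound $\|S_\e(\psi)\|_{L^2(\Sigma_{3\e})}$ by $\|\psi\|_{L^2(\Sigma_{3\e}+B(0,\e/8))}$ via Young's inequality and then claim this is $\le C\|\psi\|_{L^2(\Sigma_{3\e})}$ by ``harmless relabeling.'' That step is not correct: the set $\Sigma_{3\e}+B(0,\e/8)$ genuinely exceeds $\Sigma_{3\e}$, and neither of your suggested fixes works --- a constant $C$ cannot shrink a domain of integration, and shrinking $\supp\varphi$ from radius $\e/8$ to radius $\delta$ only produces $\Sigma_{3\e+\delta}$, which still contains $\Sigma_{3\e}$ strictly for every $\delta>0$. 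As stated, your bound controls $\psi$ on a set where the lemma's right-hand side does not directly control it. The fix is to exploit that $\eta_\e$ vanishes on $\Sigma_{2\e}$: on $\Sigma_{2\e}$ the error is just $\psi$, and on $\Sigma_{3\e}\setminus\Sigma_{2\e}$ write $\eta_\e S_\e(\psi) = \eta_\e\psi + \eta_\e(S_\e(\psi)-\psi)$, bound the first term by $\|\psi\|_{L^2(\Sigma_{3\e})}$, and apply your own shift argument to the second (this is legitimate because $\Sigma_{3\e}\setminus\Sigma_{2\e}\subset\Omega\setminus\Sigma_{2\e}$, so the mollifier never pushes evaluation of $\nabla\psi$ inside $\Sigma_\e$). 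Doing that makes your proof complete, but in effect it rederives the paper's algebraic split.
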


\begin{proof}
Note that
$$
\| \psi -\eta_\e S_\e(\psi)\|_{L^2(\Omega)}
\le \| (1-\eta_\e)\psi \|_{L^2(\Omega)}
+ \| \eta_\e (\psi -S_\e(\psi)\|_{L^2(\Omega)}.
$$
Clearly, the first term in the right-side hand  is bounded by $\| \psi\|_{L^2(\Sigma_{3d\e})}$.
To bound the second term, we use
$$
\psi (x) -S_\e(\psi )(x)=\int_{\R^d} \varphi_\e (y) [ \psi (x-y) -\psi (x)]\, dy
$$
and 
$$
\psi (x-y) -\psi (x) =\int_0^1  (-y) \cdot \nabla \psi (x-ty)\, dt.
$$
It follows that
$$
\aligned
 \| \eta_\e (\psi -S_\e(\psi)\|_{L^2(\Omega)}
&\le \int_{\R^d} \varphi_\e(y)  \| \psi (\cdot -y) -\psi (\cdot) \|_{L^2(\Omega\setminus \Sigma_{2d\e})}\, dy\\
& \le C\int_{\R^d} \varphi_\e (y)  |y|\, dy\,  \| \nabla \psi\|_{L^2(\Omega\setminus \Sigma_{d\e})}\\
& \le C \e   \| \nabla \psi \|_{L^2(\Omega\setminus \Sigma_{d\e})},
\endaligned
$$
where we have used Minkowski's inequality.
\end{proof}

Note that for $x\in \Omega_\e$, 
$$
\aligned
\text{\rm div} ( \Phi_\e)
&=\text{\rm div} (\chi_{k \ell} ) (x/\e) \Big[ \eta_\e \frac{\partial}{\partial x_\ell} S_\e \Big( f_k -\frac{\partial p_0}{\partial x_k} \Big) \Big]
+ \e \chi_{k\ell}^j (x/\e) \frac{\partial}{\partial x_j}
\Big[ \eta_\e \frac{\partial}{\partial x_\ell} S_\e \Big( f_k -\frac{\partial p_0}{\partial x_k} \Big) \Big]\\
&=-\Big[ W_k^\ell (x/\e) -\fint_{Y\setminus Y_s} W_k^\ell \Big]
\Big[ \eta_\e \frac{\partial}{\partial x_\ell} S_\e \Big( f_k -\frac{\partial p_0}{\partial x_k} \Big) \Big]\\
&\qquad\qquad
+ \e \chi_{k\ell}^j (x/\e) \frac{\partial}{\partial x_j}
\Big[ \eta_\e \frac{\partial}{\partial x_\ell} S_\e \Big( f_k -\frac{\partial p_0}{\partial x_k} \Big) \Big].
\endaligned
$$
Since
$$
\aligned
\text{\rm div} \Big( W(x/\e) (f -\nabla p_0) \Big)
 & = W_k^\ell (x/\e) \frac{\partial}{\partial x_\ell} \Big( f_k -\frac{\partial p_0}{\partial x_k} \Big)\\
 & = \Big[ W_k^\ell (x/\e) -\fint_{Y\setminus Y_s} W_k^\ell \Big]
 \frac{\partial}{\partial x_\ell} \Big( f_k -\frac{\partial p_0}{\partial x_k} \Big),
 \endaligned
$$
where we have used the equation in \eqref{p-0}, 
it  follows that
\begin{equation}\label{m-0}
\aligned
& \| \text{\rm div} \Big( \Phi_\e  + W(x/\e) (f -\nabla p_0)\Big)\|_{L^2(\Omega_{d\e})}\\
&\quad  \le C  \| \nabla (f-\nabla p_0)\|_{L^2(\Sigma_{3d\e})}
+ C \|   \nabla [ ( f-\nabla p_0) -S_\e (f-\nabla p_0) \big] \|_{L^2(\Omega\setminus \Sigma_{2d\e} )}\\
&\qquad\qquad
+C \e \| \nabla^2 S_\e (f -\nabla p_0)\|_{L^2(\Omega\setminus \Sigma_{2d\e})}.
\endaligned
\end{equation}

Let $(u_\e, p_\e)$ be a weak solution of \eqref{DP-C} with $\int_{\Omega_\e} p_\e\, dx =0$. Let
\begin{equation}\label{m-1}
v_\e = u_\e - \Big\{  W(x/\e) (f-\nabla p_0) +\Phi_\e + \Psi_t +\Psi_n \Big\},
\end{equation}
where $\Phi_\e$ is defined  by \eqref{Phi}, and
$\Psi_t, \Psi_n $ are given by Theorems \ref{t-c-theorem} and \ref{n-c-theorem}, respectively.
Using \eqref{W-n-2}, a direct computation shows that 
$$
\aligned
 & -\e^2 \Delta \Big\{  W(x/\e) (f-\nabla p_0)\Big\} 
+ \nabla \Big\{ p_0 + \e \pi (x/\e) (f-\nabla p_0) \Big\} \\
& =f -\e^2 \nabla \big( W(x/\e)\nabla  (f-\nabla p_0)\big)
- \e (\nabla W)(x/\e) \cdot \nabla ( f-\nabla p_0)+ \e \pi (x/\e) \nabla (f-\nabla p_0)\\
& \quad
+ \sigma_\e (f-\nabla p_0)
\endaligned
$$
in $\Omega_\e$, where $\sigma_\e$ is given by \eqref{W-n-3}.
It follows that
$$
\aligned
 & -\e^2 \Delta v_\e +\nabla \big\{ p_\e -p_0 - p_t -p_n -\e \pi (x/\e) (f-\nabla p_0) \big\}\\
&= \e^2 \Delta \Phi_\e
 + \e^2  \nabla \big( W(x/\e)\nabla  (f-\nabla p_0) \big) -\sigma_\e (f-\nabla p_0) \\
&\qquad\qquad
+\e (\nabla W)(x/\e) \cdot \nabla ( f-\nabla p_0)- \e \pi (x/\e) \nabla (f-\nabla p_0)
\endaligned
$$
in  $\Omega_\e$.
Also, observe  that 
$$
\text{\rm div}(v_\e) = -\text{\rm div} \Big( \Phi_\e + W(x/\e) (f-\nabla p_0) \Big) \quad \text{ in } \Omega_\e, 
$$
$ v_\e=0$ on $\Gamma_\e$, and that 
$$
v_\e = \gamma n \quad \text{ on } \partial\Omega,
$$
where $\gamma$ is a constant satisfying \eqref{n-c-3}.
Hence, by  Theorem \ref{theorem-E} as well as Remark \ref{remark-N} and the estimate \eqref{W-n-4} for $\sigma_\e$, 
\begin{equation}\label{C-10}
\aligned
& \e \| \nabla v_\e\|_{L^2(\Omega_\e)} + \| v_\e\|_{L^2(\Omega_\e)}\\
&\le C \Big\{
\e \|\nabla \Phi_\e \|_{L^2(\Omega_\e)}
+ \e \|\nabla ( f-\nabla p_0) \|_{L^2(\Omega)} +  \| f-\nabla p_0\|_{L^2(\Sigma_{c\e})}
+ \|\text{\rm div} (v_\e)\|_{L^2(\Omega_\e)}
+ |\gamma| \Big\}\\
& \le C \Big\{
\e \|\nabla ( f-\nabla p_0)\|_{L^2(\Omega)}
+ \| \nabla (f-\nabla p_0) \|_{L^2(\Sigma_{3d\e})}+  \| f-\nabla p_0\|_{L^2(\Sigma_{c\e})}\\
&\qquad\qquad+C \|   \nabla [ ( f-\nabla p_0) -S_\e (f-\nabla p_0) \big] \|_{L^2(\Omega\setminus \Sigma_{2d\e} )}\\
&\qquad\qquad
+ \e \| \nabla^2 S_\e (f-\nabla p_0)\|_{L^2(\Omega\setminus \Sigma_{2d\e})}
+  \e \| \nabla_{\tan} ( f-\nabla p_0)\|_{L^2(\partial\Omega)}\Big\},
\endaligned
\end{equation}
where we have used \eqref{m-0} and \eqref{n-c-3}.
Let
$$
q_\e = p_\e - p_0 - q_t -q_n -\e \pi (x/\e) (f-\nabla p_0).  
$$
Note that Theorem \ref{theorem-E} also gives
\begin{equation}\label{C-11}
\aligned
 \| q_\e  - \fint_{\Omega_\e} q_\e  \|_{L^2(\Omega_\e)}
&  \le C \Big\{
\e \|\nabla ( f-\nabla p_0)\|_{L^2(\Omega)}
+ \| \nabla (f-\nabla p_0) \|_{L^2(\Sigma_{3d\e})}  \\
&\qquad
+ \| f-\nabla p_0\|_{L^2(\Sigma_{c\e})}
\\
&\qquad  +C \|   \nabla [ ( f-\nabla p_0) -S_\e (f-\nabla p_0) \big] \|_{L^2(\Omega\setminus \Sigma_{2d\e} )}\\
&\qquad
+ \e \| \nabla^2 S_\e (f-\nabla p_0)\|_{L^2(\Omega\setminus \Sigma_{2d\e})}
+  \e \| \nabla_{\tan} ( f-\nabla p_0)\|_{L^2(\partial\Omega)}\Big\}.
\endaligned
\end{equation}

\begin{lemma}\label{lemma-C-1}
Let $(u_\e, p_\e)$ be a weak solution of \eqref{DP-C} with $\int_{\Omega_\e} p_\e\, dx=0$.
Then
\begin{equation}\label{C-1-1}
\aligned
 & \e \| \nabla \big( u_\e - W(x/\e) (f-\nabla p_0) \big) \|_{L^2(\Omega_\e)}
+ \| p_\e - p_0\|_{L^2(\Omega_\e)}\\
& \le C \e^{1/2}
\Big\{
 \e^{1/2}  \|\nabla ( f-\nabla p_0)\|_{L^2(\Omega)}
+\e^{-1/2}  \| \nabla (f-\nabla p_0) \|_{L^2(\Sigma_{3d\e})}+\e^{-1/2}  \| f-\nabla p_0\|_{L^2(\Sigma_{c\e})}\\
&\qquad\qquad\quad
+\e^{-1/2}  \|   \nabla [ ( f-\nabla p_0) -S_\e (f-\nabla p_0) \big] \|_{L^2(\Omega\setminus \Sigma_{2d\e} )}\\
&\qquad\qquad\quad
+ \e^{1/2} \| \nabla^2S_\e  (f-\nabla p_0)\|_{L^2(\Omega\setminus \Sigma_{2d\e})}
 + \| f-\nabla p_0 \|_{L^2(\partial\Omega)}\\
&\qquad\qquad\quad
+ \| b\|_{L^2(\partial\Omega)} + \e \| \nabla_{\tan}  b  \|_{L^2(\partial\Omega)}
+ \sqrt{\e} \| \nabla_{\tan} ( f-\nabla p_0) \|_{L^2(\partial\Omega)}
\Big\}
\endaligned
\end{equation}
for $0< \e< 1$.
\end{lemma}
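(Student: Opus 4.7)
The plan is to bound the left-hand side of \eqref{C-1-1} via the decomposition inherited from the ansatz just above,
$$
u_\e - W(x/\e)(f-\nabla p_0) = v_\e + \Phi_\e + \Psi_t + \Psi_n,
\qquad
p_\e - p_0 = q_\e + q_t + q_n + \e\pi(x/\e)(f-\nabla p_0),
$$
and estimate each summand separately by the triangle inequality. The pieces $(v_\e, q_\e)$ are controlled directly by \eqref{C-10} and \eqref{C-11}, while $(\Psi_t, \Psi_n, q_t, q_n)$ are controlled by Theorems \ref{t-c-theorem} and \ref{n-c-theorem} applied to the boundary data \eqref{c-t-1} and \eqref{n-c-1}. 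This immediately supplies all four boundary terms $\|f-\nabla p_0\|_{L^2(\partial\Omega)}$, $\|b\|_{L^2(\partial\Omega)}$, $\e\|\nabla_{\tan}b\|_{L^2(\partial\Omega)}$, $\sqrt{\e}\|\nabla_{\tan}(f-\nabla p_0)\|_{L^2(\partial\Omega)}$ and the four bulk terms already present on the right-hand side of \eqref{C-10}--\eqref{C-11}.

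What remains to be estimated is the interior corrector $\Phi_\e$ together with the scalar $\e\pi(x/\e)(f-\nabla p_0)$. Writing $g=f-\nabla p_0$, the product rule applied to \eqref{Phi} gives
$$
\e\nabla\Phi_\e^j = \e^2(\nabla\eta_\e)\chi_{k\ell}^j(x/\e)\partial_\ell S_\e g_k + \e\,\eta_\e(\nabla\chi_{k\ell}^j)(x/\e)\partial_\ell S_\e g_k + \e^2\eta_\e\chi_{k\ell}^j(x/\e)\partial^2_{j\ell} S_\e g_k.
$$
Using $|\nabla\eta_\e|\le C\e^{-1}$ supported in $\Sigma_{3\e}\setminus\Sigma_{2\e}$, the boundedness of $\chi$, $\nabla\chi$, and $\pi$ (guaranteed by the $C^{1,\alpha}$ assumption on $\partial Y_s$), and the commutation $\nabla S_\e g = S_\e\nabla g$ with Young's inequality, the three terms are dominated respectively by $\|\nabla g\|_{L^2(\Sigma_{4\e})}$, $\e\|\nabla g\|_{L^2(\Omega)}$, and $\e^2\|\nabla^2 S_\e g\|_{L^2(\Omega\setminus\Sigma_{2\e})}$, each matching a term already on the right-hand side of \eqref{C-10}. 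Similarly $\e\|\pi(x/\e)g\|_{L^2(\Omega_\e)}\le C\e\|g\|_{L^2(\Omega)}$, which is absorbed by the combination $\|g\|_{L^2(\partial\Omega)}+\sqrt{\e}\|\nabla g\|_{L^2(\Omega)}$ via the elementary inequality $\|g\|_{L^2(\Omega)}\le C(\|g\|_{L^2(\partial\Omega)}+\|\nabla g\|_{L^2(\Omega)})$.

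The one delicate point is the mean of $q_\e$: since $\int_{\Omega_\e}p_\e=0$ while $\int_\Omega p_0=0$ (and not $\int_{\Omega_\e}p_0=0$), the estimate \eqref{C-11} bounds $q_\e$ only up to its average $\fint_{\Omega_\e}q_\e = -\fint_{\Omega_\e}p_0 - \e\fint_{\Omega_\e}\pi(x/\e)g$. A standard Riemann-sum argument applied to the periodically distributed inclusions $\e(\overline{Y_s}+z_k)$ yields $|\int_{\Omega_\e}p_0| = |\int_{\Omega\setminus\Omega_\e}p_0|\le C\e\|p_0\|_{C^1}$, so the mean contributes only an $O(\e)$ scalar, which is absorbed into the target $O(\sqrt{\e})$ bound via elliptic regularity for \eqref{p-0}. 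Regrouping each resulting contribution as $\sqrt{\e}$ times a term in the conclusion then yields \eqref{C-1-1}. The main obstacle is not conceptual but organizational: ensuring the natural $L^2$-regions arising from $\eta_\e$, $\nabla\eta_\e$, $1-\eta_\e$ and the mollification $S_\e$ align precisely with the prescribed domains $\Omega\setminus\Sigma_{2\e}$ and $\Sigma_{3\e}$ in the conclusion, and that no spurious $O(1)$ constant is generated by the mean adjustment of $p_0$.
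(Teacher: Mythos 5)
Your overall strategy is the same as the paper's (one-line) proof: decompose $u_\e - W(x/\e)(f-\nabla p_0) = v_\e + \Phi_\e + \Psi_t + \Psi_n$ and $p_\e - p_0 = q_\e + q_t + q_n + \e\pi(x/\e)(f-\nabla p_0)$, and combine \eqref{C-10}, \eqref{C-11}, \eqref{c-t-2}, \eqref{n-c-2} with the triangle inequality. Your bookkeeping for $\e\|\nabla\Phi_\e\|_{L^2(\Omega_\e)}$ (the three terms from the product rule, using $|\nabla\eta_\e|\le C\e^{-1}$, boundedness of $\chi$ and $\nabla\chi$, and $\nabla S_\e = S_\e\nabla$) is correct, and the minor mismatch between $\Sigma_{4\e}$ and the stated $\Sigma_{3\e}$ is the same cosmetic slack the paper itself carries. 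The estimate $\e\|\pi(x/\e)(f-\nabla p_0)\|_{L^2(\Omega_\e)} \le C\e\|f-\nabla p_0\|_{L^2(\Omega)}$ combined with the Poincar\'e-type bound $\|g\|_{L^2(\Omega)}\le C(\|g\|_{L^2(\partial\Omega)} + \|\nabla g\|_{L^2(\Omega)})$ is also fine and matches terms on the right-hand side of \eqref{C-1-1}.

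The one genuinely delicate point is the one you flag: the mean $\fint_{\Omega_\e}q_\e = -\fint_{\Omega_\e}p_0 - \e\fint_{\Omega_\e}\pi(x/\e)(f-\nabla p_0)$, since \eqref{C-11} only controls the mean-free part. You are right that this needs attention, but your resolution has two problems. First, the bound $|\int_{\Omega\setminus\Omega_\e}p_0|\le C\e\|p_0\|_{C^1}$ requires $p_0\in C^1(\overline{\Omega})$; with $b$ only in $H^1(\partial\Omega)$ the component $p_0^{(2)}$ is not $C^1$ up to $\partial\Omega$, so one should instead split $p_0 = p_0^{(1)}+p_0^{(2)}$ and use cell-wise Poincar\'e plus $\|\nabla p_0^{(2)}\|_{L^2}\le C\|b\cdot n\|_{L^2(\partial\Omega)}$ to get $O(\e\|\nabla p_0\|_{L^2(\Omega)}+\e^{1/2}\|p_0\|_{L^2(\Omega)})$. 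Second, and more seriously, even the resulting $O(\e)$ or $O(\e^{1/2})$ mean term is \emph{not} dominated by the right-hand side of \eqref{C-1-1} as written: if $f=\nabla\phi$ and $b=0$, then $f-\nabla p_0\equiv 0$, every term in the braces of \eqref{C-1-1} vanishes, yet $u_\e\equiv 0$, $p_\e = \phi - \fint_{\Omega_\e}\phi$, $p_0 = \phi - \fint_\Omega\phi$, so $p_\e - p_0 = \fint_\Omega\phi - \fint_{\Omega_\e}\phi$ is a nonzero constant of size $O(\e)$. So the stated inequality is, strictly speaking, incorrect without an extra additive term such as $C\e\|\nabla p_0\|_{L^2(\Omega)}$ on the right, or without replacing $\|p_\e-p_0\|_{L^2(\Omega_\e)}$ by its mean-free version. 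This is a defect of the lemma statement itself, not just of your argument; the downstream Theorem \ref{theorem-C} is unaffected because its right-hand side carries $\|f\|_{C^{1,1/2}(\Omega)}$, which controls the offending mean. Your phrase that the mean ``is absorbed into the target $O(\sqrt{\e})$ bound via elliptic regularity'' is thus pointing at the right repair, but it is the bound of Theorem \ref{theorem-C} (not of \eqref{C-1-1}) that actually absorbs it, and you should say so explicitly.
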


\begin{proof}
The estimate \eqref{C-1-1} follows readily from \eqref{C-10}, \eqref{C-11},
\eqref{c-t-2}, and \eqref{n-c-2}.
\end{proof}

To bound the right-hand side of \eqref{C-1-1},
we let  $p_0=p_0^{(1)} + p_0^{(2)}$, where $p_0^{(1)}$ and $p_0^{(2)}$
are solutions of the Neumann problems,
\begin{equation}\label{p-0-1}
\left\{
\aligned
K_j^i \frac{\partial}{\partial x_i} \Big( f_j -\frac{\partial p_0^{(1)}}{\partial x_j} \Big) 
&=0 & \quad & \text{ in } \Omega,\\
n_i K_j^i \Big( f_j -\frac{\partial p_0^{(1)} }{\partial x_j}  \Big) & =0& \quad & \text{ on } \partial \Omega,
\endaligned
\right.
\end{equation}
and
\begin{equation}\label{p-0-2}
\left\{
\aligned
K_j^i \frac{\partial^2 p_0^{(2)} }{\partial x_i \partial x_j} & =0 & \quad & \text{ in }\Omega,\\
n_i K_j^i \frac{\partial p_0^{(2)}}{\partial x_j} & =- b\cdot n & \quad & \text{ on } \partial\Omega,
\endaligned
\right.
\end{equation}
respectively,
with $\int_\Omega p_0^{(1)}\, dx =\int_\Omega p_0^{(2)}\, dx =0$.

\begin{lemma}\label{lemma-C-2}
Let $p_0^{(1)}$ be a solution of \eqref{p-0-1} for some $f\in C^{1, 1/2}(\overline{\Omega}, \R^d)$.
Then 
\begin{equation}\label{C-2-0}
\aligned
\| \nabla p_0^{(1)} \|_{L^\infty(\Omega)}
+ \|\nabla^2 p_0^{(1)} \|_{L^\infty (\Omega)}
 & \le C  \| f\|_{C^{1, 1/2}(\overline{\Omega})}, \\
 \|\nabla^2 S_\e (f) \|_{L^\infty(\Omega\setminus \Sigma_{\e})}
 +  \|\nabla^2 S_\e  (\nabla p^{(1)} _0)  \|_{L^\infty(\Omega\setminus\Sigma_{\e})} 
& \le C \e^{-1/2} \| f\|_{C^{1, 1/2}(\overline{\Omega})},
\endaligned
\end{equation}
and
\begin{equation}\label{C-2-1a}
\|\nabla f-S_\e (\nabla f) \|_{L^\infty (\Omega\setminus \Sigma_{\e})}
+ \| \nabla^2 p^{(1)} _0 -  S_\e (\nabla^2 p^{(1)} _0) \|_{L^\infty(\Omega\setminus \Sigma_{\e})}
\le C \e^{1/2} \| f\|_{C^{1, 1/2}(\overline{\Omega})}.
\end{equation}
\end{lemma}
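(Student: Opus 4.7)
\textbf{Proof plan for Lemma \ref{lemma-C-2}.}

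The plan is to dispose of the three bounds in order: first the pointwise $C^2$ estimate for $p_0^{(1)}$ via Schauder theory, then the two mollifier estimates by standard $C^{0,1/2}$ convolution arguments.

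For the first bound, I observe that $K=(K_j^i)$ is a constant symmetric positive definite matrix, so \eqref{p-0-1} is a uniformly elliptic constant-coefficient Neumann problem: $K_j^i\partial_i\partial_j p_0^{(1)} = \partial_i(K_j^i f_j)$ in $\Omega$ with $n_i K_j^i\partial_j p_0^{(1)} = n_i K_j^i f_j$ on $\partial\Omega$. Since $\Omega$ is $C^{2,\alpha}$ (with $\alpha\ge 1/2$, needed so that the boundary normal carries exponent $1/2$) and $f\in C^{1,1/2}(\overline{\Omega})$, the classical Schauder estimate for the Neumann problem yields $p_0^{(1)}\in C^{2,1/2}(\overline{\Omega})$ with $\|p_0^{(1)}\|_{C^{2,1/2}(\overline{\Omega})}\le C\|f\|_{C^{1,1/2}(\overline{\Omega})}$. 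In particular both $\nabla f$ and $\nabla^2 p_0^{(1)}$ lie in $C^{0,1/2}(\overline{\Omega})$ with seminorm controlled by $\|f\|_{C^{1,1/2}}$.

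Next I will record two elementary mollifier bounds, valid for any $\psi\in C^{0,1/2}(\overline{\Omega})$ and any $x\in\Omega\setminus\Sigma_\e$. Since $\operatorname{supp}\varphi_\e\subset B(0,\e/8)$, the convolution $S_\e(\psi)(x)$ only sees values of $\psi$ in $B(x,\e/8)\subset\Omega$. Using $\int\varphi_\e=1$ gives
\[
|S_\e(\psi)(x)-\psi(x)|\le \int|\psi(x-y)-\psi(x)|\varphi_\e(y)\,dy\le C\e^{1/2}[\psi]_{C^{0,1/2}},
\]
while using $\int\nabla\varphi_\e=0$ and the scaling $|\nabla\varphi_\e(y)|\le C\e^{-d-1}\mathbf{1}_{B(0,\e/8)}(y)$ gives
\[
|\nabla S_\e(\psi)(x)|=\Big|\int(\psi(x-y)-\psi(x))\nabla\varphi_\e(y)\,dy\Big|\le C\e^{-1/2}[\psi]_{C^{0,1/2}}.
\]

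Part (3) then follows immediately from the first bound applied to $\psi=\nabla f$ and $\psi=\nabla^2 p_0^{(1)}$. For part (2), since $f$ and $\nabla p_0^{(1)}$ are at least $C^1$, integrating by parts once inside the convolution (legitimate because $\varphi_\e(x-\cdot)$ is compactly supported in $\Omega$ whenever $x\in\Omega\setminus\Sigma_\e$) gives $\nabla^2 S_\e(f)=\nabla S_\e(\nabla f)$ and $\nabla^2 S_\e(\nabla p_0^{(1)})=\nabla S_\e(\nabla^2 p_0^{(1)})$ on $\Omega\setminus\Sigma_\e$; applying the second mollifier bound with $\psi=\nabla f$ and $\psi=\nabla^2 p_0^{(1)}$ then yields the claimed $\e^{-1/2}$ rate. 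The only substantive input is the Schauder estimate for the constant-coefficient Neumann problem; the rest is routine mollifier calculus, and I do not foresee any genuine obstacle.
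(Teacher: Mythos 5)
Your proof is correct and follows essentially the same route as the paper: a Schauder estimate for the constant-coefficient Neumann problem to control $p_0^{(1)}$ in $C^{2,1/2}$, followed by the two standard mollifier bounds (one using $\int\varphi_\e=1$ for the $\e^{1/2}$ gain in \eqref{C-2-1a}, the other using $\int\nabla\varphi_\e=0$ for the $\e^{-1/2}$ loss in the second line of \eqref{C-2-0}); the paper writes these out by the same difference-quotient trick after rescaling.

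One small but genuinely useful observation you make explicitly, and which the paper glosses over, is that this argument really needs $\partial\Omega\in C^{2,1/2}$, i.e.\ $\alpha\ge 1/2$. The paper states only ``$\Omega$ is $C^{2,\alpha}$ for some $\alpha>0$'' and invokes a global $C^2$ bound plus an ``interior $C^{2,1/2}$'' estimate for $[\nabla^2 p_0^{(1)}]_{C^{0,1/2}}$ on a ball of radius comparable to $\e$; but the scale-invariant form of that interior estimate is
\[
[\nabla^2 p_0^{(1)}]_{C^{0,1/2}(B(x,\e/4))}\le C\Big(\|f\|_{C^{1,1/2}(\overline\Omega)}+\e^{-1/2}\,\|\nabla^2 p_0^{(1)}\|_{L^\infty(B(x,\e/2))}\Big),
\]
so the factor $\e^{-1/2}$ is suppressed in the paper's display and, as written, would degrade the final bound to $O(\e^{-1})$. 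The way to remove it is precisely your global $C^{2,1/2}(\overline\Omega)$ Schauder estimate, which in turn requires $\alpha\ge 1/2$ (for $\alpha<1/2$ one only gets $p_0^{(1)}\in C^{2,\alpha}(\overline\Omega)$ and then $\|\nabla^2 S_\e(\nabla p_0^{(1)})\|_{L^\infty(\Omega\setminus\Sigma_\e)}$ is only $O(\e^{\alpha-1})$, which is insufficient). So your version is the correct way to read the paper's argument, and your caveat on $\alpha$ is warranted.
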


\begin{proof}
Since $\Omega$ is a bounded $C^{2, \alpha}$ domain, the first inequality in \eqref{C-2-0}  follows
from  the classical $C^2$ estimates, up to the boundary, for
second-order elliptic equations with constant coefficients.
Next, note that for $x\in \Omega\setminus \Sigma_\e$,
$$
\aligned
\frac{\partial}{\partial x_i} S_\e (\nabla f) (x)
&=\e^{-1-d}\int_{\R^d} \frac{\partial \varphi }{\partial y_i}  (y/\e) \nabla f (x-y)\, dy\\
&=\e^{-1-d}
\int_{\R^d} \frac{\partial \varphi}{\partial y_i} (y/\e)  \big[ \nabla f(x-y) -\nabla f(x)]\, dy.
\endaligned
$$
It follows that
$$
\aligned
  \|\nabla^2 S_\e(f)\|_{L^\infty(\Omega\setminus \Sigma_\e)}
& \le C \e^{-1-d} \int_{B(0, \e/4)} |\nabla \varphi (y/\e)| |y|^{1/2}\, dy\,  \| f\|_{C^{1, 1/2}(\overline{\Omega})}\\
& \le C\e^{-1/2} \| f\|_{C^{1, 1/2}(\overline{\Omega})}.
\endaligned
$$
By the interior $C^{2, 1/2}$ estimates,
$$
|\nabla^2 p^{(1)} _0(x-y) -\nabla^2 p^{(1)} _0(y)|
\le C | y|^{1/2} \Big\{ \| f\|_{C^{1, 1/2}(\overline{\Omega})} + \| p^{(1)} _0\|_{C^2(\overline{\Omega})} \Big\}
$$
for any $x\in \Omega\setminus \Sigma_{\e}$ and  $|y|\le \frac14 \e$.
As in the case of $\nabla^2 S_\e(f)$, this implies that
$$
|\nabla^2 S_\e (\nabla p_0^{(1)}) (x)|
\le C \e^{-1/2} \| f\|_{C^{1, 1/2}(\overline{\Omega})}
$$
for any $x\in \Omega\setminus \Sigma_\e$.
Finally, to see \eqref{C-2-1a}, we write
$$
 S_\e (\nabla f) (x)-\nabla f (x)   =\int_{\R^d} \varphi_ \e (x-y) [ \nabla f(x-y) -\nabla f(x)]\, dy
$$
and proceed as in the previous estimates.
\end{proof}

\begin{lemma}\label{lemma-C-3}
Let $p_0^{(2)}$  be a solution of \eqref{p-0-2}. Then
\begin{align}
\|\nabla p_0^{(2)} \|_{L^2(\Omega)}
+ \|\nabla  p_0^{(2)} \|_{L^2(\partial\Omega)}+ \e^{-1/2} \| \nabla p_0 \|_{L^2(\Sigma_{\e})}
&  \le C \| b\cdot n \|_{L^2(\partial\Omega)},  \label{C-3-a}\\
 \|\nabla^2 p_0^{(2)}\|_{L^2(\Omega)}
 + \| \nabla^2 p_0^{(2)} \|_{L^2(\partial\Omega)}
 + \e^{-1/2} \| \nabla^2 p_0^{(2)} \|_{L^2(\Sigma_{\e})} & \le C  \| b\cdot n \|_{H^1(\partial\Omega)},  \label{C-3-b}\\
\e^{-1/2} \|\nabla^2 p_0^{(2)} -S_\e (\nabla^2 p_0^{(2)} ) \|_{L^2(\Omega\setminus \Sigma_{2\e})}
 & \le C \| b\cdot n \|_{H^1(\partial\Omega)}, \label{C-3-c}\\
\e^{1/2}  \|\nabla^2 S_\e (\nabla p_0^{(2)}) \|_{L^2(\Omega\setminus \Sigma_{2\e})}
 &\le C  \| b\cdot n \|_{H^1(\partial\Omega)}, \label{C-3-d}
 \end{align}
 for $0< \e<1$.
\end{lemma}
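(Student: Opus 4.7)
The four estimates concern the constant-coefficient, elliptic, pure-Neumann problem \eqref{p-0-2} on a $C^{2,\alpha}$ domain, and the core input is the classical non-tangential-maximal-function estimate (Fabes--Kenig--Verchota, Jerison--Kenig) for $L^2$ Neumann data,
\[
\|(\nabla p_0^{(2)})^*\|_{L^2(\partial\Omega)} \le C\,\|b\cdot n\|_{L^2(\partial\Omega)},
\]
together with the general inequality $\|v\|^2_{L^2(\Sigma_\rho)} \le C\rho\,\|(v)^*\|^2_{L^2(\partial\Omega)}$. Estimate \eqref{C-3-a} is then immediate: the energy identity (test \eqref{p-0-2} with $p_0^{(2)}$ and use Poincar\'e, exploiting $\int_\Omega p_0^{(2)}\, dx = 0$) gives the $L^2(\Omega)$ bound on $\nabla p_0^{(2)}$, while the non-tangential estimate above gives the $L^2(\partial\Omega)$ bound.

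For \eqref{C-3-b}, I would apply a tangential derivative $\partial_\tau$ to \eqref{p-0-2}. Since the operator has constant coefficients, $\partial_\tau p_0^{(2)}$ still solves $K_j^i \partial_i \partial_j v = 0$ in $\Omega$; on the $C^{2,\alpha}$ boundary, a standard commutator calculation shows the co-normal derivative of $\partial_\tau p_0^{(2)}$ equals $\partial_\tau(b\cdot n)$ plus a remainder bounded in $L^2(\partial\Omega)$ by $\|\nabla p_0^{(2)}\|_{L^2(\partial\Omega)}$, which is already controlled via \eqref{C-3-a}. Reapplying the $L^2$ Neumann non-tangential estimate to $\partial_\tau p_0^{(2)}$ yields $\|(\nabla \partial_\tau p_0^{(2)})^*\|_{L^2(\partial\Omega)} \le C\|b\cdot n\|_{H^1(\partial\Omega)}$, and the normal-normal second derivative is recovered algebraically from $K_j^i \partial_i \partial_j p_0^{(2)} = 0$. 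The three components of \eqref{C-3-b} then follow from the energy identity applied to $\partial_\tau p_0^{(2)}$, the trace of the non-tangential maximal function, and the $\Sigma_\rho$-collapse inequality applied to $v = \nabla^2 p_0^{(2)}$ with $\rho = 3\e$.

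For \eqref{C-3-c}, because $\text{supp}\,\varphi \subset B(0, 1/8)$, the mollifier $S_\e$ acting at $x \in \Omega \setminus \Sigma_{2\e}$ only probes $\Omega \setminus \Sigma_\e$; Taylor expansion and Minkowski's inequality then give
\[
\|\nabla^2 p_0^{(2)} - S_\e(\nabla^2 p_0^{(2)})\|^2_{L^2(\Omega\setminus\Sigma_{2\e})} \le C\e^2\,\|\nabla^3 p_0^{(2)}\|^2_{L^2(\Omega\setminus\Sigma_\e)}.
\]
The right-hand side is handled by a Caccioppoli inequality for the constant-coefficient operator, applied to the second derivatives: with a cutoff $\eta \in C_0^\infty(\Omega)$ satisfying $\eta \equiv 1$ on $\Omega\setminus\Sigma_\e$, $\eta \equiv 0$ on $\Sigma_{\e/2}$, and $|\nabla \eta| \le C\e^{-1}$, one obtains $\|\nabla^3 p_0^{(2)}\|^2_{L^2(\Omega\setminus\Sigma_\e)} \le C\e^{-2}\|\nabla^2 p_0^{(2)}\|^2_{L^2(\Sigma_\e)}$, and the $\Sigma_\e$-piece of \eqref{C-3-b} closes the estimate. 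Finally, \eqref{C-3-d} follows from the pointwise identity $\nabla^2 S_\e(\nabla p_0^{(2)}) = S_\e(\nabla^3 p_0^{(2)})$ on $\Omega\setminus\Sigma_{3\e}$ (again by the support property of $\varphi$), combined with Young's convolution inequality and the $L^2$ bound on $\nabla^3 p_0^{(2)}$ just obtained. The main obstacle is \eqref{C-3-b}: one must invoke the $L^2$ Neumann non-tangential-maximal-function machinery at the $H^1$-data level and track carefully the commutators arising from tangentially differentiating the oblique Neumann condition on a curved $C^{2,\alpha}$ boundary; once \eqref{C-3-b} is in hand, \eqref{C-3-c} and \eqref{C-3-d} reduce to routine interior Caccioppoli and mollifier manipulations.
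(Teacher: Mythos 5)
Your argument is correct and lands in the same circle of ideas as the paper, namely the nontangential-maximal-function ($\text{NTM}$) theory for the constant-coefficient Neumann problem on a $C^{2,\alpha}$ domain, but it departs in two places worth noting. First, the paper proves all four estimates in one stroke by quoting the \emph{square-function} estimates
\[
\| (\nabla^k p_0^{(2)} )^*\|_{L^2(\partial\Omega)}
+ \Big(  \int_{\Omega} \text{dist}(x, \partial\Omega)\,
|\nabla^{k+1} p_0^{(2)}|^2\, dx \Big)^{1/2}
  \le C \| b\cdot n \|_{H^{k-1}(\partial\Omega)}, \qquad k=1,2,
\]
from the literature; the square-function term is precisely what gives $\e^{1/2}\|\nabla^3 p_0^{(2)}\|_{L^2(\Omega\setminus\Sigma_\e)}\le C\|b\cdot n\|_{H^1(\partial\Omega)}$, from which \eqref{C-3-c} and \eqref{C-3-d} fall out immediately after the Taylor/Minkowski and differentiation-under-$S_\e$ steps you also perform. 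You instead derive the interior bound on $\nabla^3 p_0^{(2)}$ by a Caccioppoli inequality applied to $\nabla^2 p_0^{(2)}$, using a cutoff vanishing in $\Sigma_{\e/2}$ and equal to one outside $\Sigma_\e$, which reduces everything to the $\Sigma_\e$-part of \eqref{C-3-b}. That works (the $\e^{-1}$ loss is exactly compensated by the $\e^{1/2}$ gain from the collapse inequality on $\Sigma_\e$) and is arguably more elementary and self-contained, at the cost of an extra layer of boundary strip. Second, for \eqref{C-3-b} you sketch a proof of the $H^1$-data NTM bound by differentiating the Neumann condition tangentially and controlling commutators; the paper simply cites this as a known result for $C^{2,\alpha}$ domains. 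Your sketch is the right idea, though the remark that the commutator terms are controlled by $\|\nabla p_0^{(2)}\|_{L^2(\partial\Omega)}$ (rather than, say, by $\|(\nabla p_0^{(2)})^*\|_{L^2(\partial\Omega)}$ plus lower-order pieces coming from the $C^{1,\alpha}$ variation of the normal) glosses over details that one must check on a curved boundary. Since the paper treats this as black-box input, the level of detail you give is adequate for the intended purpose; just be aware that the paper delegates the hard part of \eqref{C-3-b} to the references, whereas you are implicitly re-deriving it.
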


\begin{proof}
The estimates \eqref{C-3-a}-\eqref{C-3-d} follow from the nontangential-maximal-function and square-function estimates
for the Neumann problems,
\begin{align}
\| (\nabla p_0^{(2)} )^*\|_{L^2(\partial\Omega)}
+ \left(  \int_{\Omega} \text{\rm dist}(x, \partial\Omega)
|\nabla^2 p_0^{(2)} (x)|^2\, dx \right)^{1/2}
 & \le C \| b\cdot n  \|_{L^2(\partial\Omega)},\label{C-3-0}\\
\| (\nabla^2 p_0^{(2)} )^*\|_{L^2(\partial\Omega)}
+ \left(  \int_{\Omega} \text{\rm dist}(x, \partial\Omega)
|\nabla^3 p_0^{(2)} (x)|^2\, dx \right)^{1/2}
 & \le C \| b\cdot n \|_{H^1(\partial\Omega)},\label{C-3-1}
\end{align}
where $(u)^*$ denotes the nontangential maximal function of $u$, defined by \eqref{max}. 
We remark that the estimate \eqref{C-3-0} hols if $\Omega$ is a bounded Lipschitz domain \cite{Kenig-book}, while 
\eqref{C-3-1} holds for $C^{2, \alpha}$ domains.

We only give the proof of \eqref{C-3-c}; the others follow readily from \eqref{C-3-0}-\eqref{C-3-1}.
Choose $\widetilde{\eta}_\e \in C_0^1(\Omega)$ such that $\widetilde{\eta}_\e=1$ in $\Omega\setminus \Sigma_{2\e}$,
$\widetilde{\eta}_\e=0$ in $\Sigma_{\e}$, and $|\nabla \widetilde{\eta}_\e| \le C \e^{-1}$.
Then the left-hand side of \eqref{C-3-c} is bounded by
\begin{equation}\label{C-3-2}
\e^{-1/2} \| \nabla^2 p_0^{(2)} -\widetilde{\eta}_\e S_\e ( \nabla^2 p_0^{(2)}) \|_{L^2(\Omega)}.
\end{equation}
Using the same argument as in the proof of \eqref{S-1}, we may show that \eqref{C-3-2} is bounded by
$$
C \e^{-1/2}  \| \nabla^2 p_0^{(2)} \|_{L^2(\Sigma_{3\e})}
+ C \e^{1/2} \|\nabla^3 p_0^{(2)}\|_{L^2(\Omega\setminus\Sigma_\e)}
\le C \| b\cdot n \|_{H^1(\partial\Omega)},
$$
where we have used \eqref{C-3-1} for the last step.
\end{proof}

We are now in a position to give the proof of Theorem \ref{theorem-C}.

\begin{proof}[Proof of Theorem \ref{theorem-C}]

Using Lemmas \ref{lemma-C-2} and \ref{lemma-C-3},  it is not hard to see that  the right-hand side of \eqref{C-1-1}
is bounded by
$$
C \sqrt{\e} \big\{ \| f\|_{C^{1, 1/2}({\Omega})} + \| b\cdot n \|_{H^1(\partial\Omega)}  +\| b\|_{L^2(\partial\Omega)} 
+ \e \|\nabla_{\tan} b \|_{L^2(\partial\Omega)} \big\}.
$$
As a result, we have proved that 
\begin{equation}\label{C-4-0}
\aligned
 & \e \| \nabla \big( u_\e - W (x/\e) ( f -\nabla p_0) \big) \|_{L^2(\Omega_\e)}
+\| p_\e -p_0 \|_{L^2(\Omega_\e)}\\
& \qquad\qquad
\le C \sqrt{\e}
\left\{ \| f\|_{C^{1, 1/2}(\Omega)}
 + \| b\cdot  n\|_{H^{1} (\partial\Omega)} +\| b\|_{L^2(\partial\Omega)} 
+ \e \|\nabla_{\tan} b \|_{L^2(\partial\Omega)} \right\}.
 \endaligned
\end{equation}
In view of Lemma \ref{P-lemma}, 
it remains to show that
\begin{equation}\label{C-4-1}
\| P_\e -p_0\|_{L^2(\Omega)}
 \le C \sqrt{\e}
\left\{ \| f\|_{C^{1, 1/2}(\Omega)}
 + \| b\cdot n \|_{H^{1} (\partial\Omega)}
 +\| b\|_{L^2(\partial\Omega)} 
+ \e \|\nabla_{\tan} b \|_{L^2(\partial\Omega)} \right\},
\end{equation}
where $P_\e$ is an extension of $p_\e$ to $\Omega$, defined by \eqref{P}.
To this end, we define
\begin{equation}\label{C-4-2}
p_0^\e
=\left\{
\aligned
& p_0 & \quad & \text{ if } x\in \Omega_\e,\\
& \fint_{\e (Y_f + z_k) } p_0 & \quad & \text{ if } x\in \e (Y_s + z_k) \text{ and } \e (Y+ z_k) \subset \Omega
\text{ for some } z_k \in \mathbb{Z}^d,
\endaligned
\right.
\end{equation}
i.e., we extend $p_0|_{\Omega_\e}$  to $\Omega$ in the same manner as we do $p_\e$ from $\Omega_\e$ to $\Omega$.
Then, 
$$
\aligned
\| P_\e -p_0\|_{L^2(\Omega)}
&\le \| P_\e - p_0^\e\|_{L^2(\Omega)} + \| p_0^\e - p_0\|_{L^2(\Omega)}\\
& = \| p_\e -p_0\|_{L^2(\Omega_\e)}
+ \| P_\e -p_0^\e \|_{L^2(\Omega\setminus \Omega_\e)}
+ \| p_0^\e -p_0\|_{L^2(\Omega\setminus \Omega_\e)}.
\endaligned
$$
Note that
$$
\| P_\e -p_0^\e \|_{L^2(\Omega\setminus \Omega_\e)}\le C \| p_\e -p_0\|_{L^2(\Omega_\e)}.
$$
Using Poincar\'e's inequality on each cell $\e (Y_f +z_k)$, we may show that
$$
\aligned
\| p_0^\e -p_0\|_{L^2(\Omega\setminus \Omega_\e)}
 & \le C \e \| \nabla p_0 \|_{L^2(\Omega)}
\le C \e  \left\{ \| f\|_{L^2(\Omega)} + \| b \|_{L^2(\partial\Omega)} \right\}.
\endaligned
$$
As a result, we have proved that
$$
\| P_\e -p_0\|_{L^2(\Omega)}
\le C \| p_\e -p_0\|_{L^2(\Omega_\e)}
+ C  \e  \left\{ \| f\|_{L^2(\Omega)} + \| b \|_{L^2(\partial\Omega)} \right\}.
$$
This completes the proof.
\end{proof}

\begin{remark}\label{last-r}
{\rm
Let $u(x, x/\e)$ be given by \eqref{u-0}.
Due to the discrepancy of $u_\e$ and $u(x, x/\e)$ on $\partial\Omega$,
the $O(\sqrt{\e})$ rate in Theorem \ref{main-theorem-1} is sharp. Indeed,
by  applying  the following trace  inequality to the function 
$v= v_\e=u_\e - u(x, x/\e)$,
\begin{equation}\label{last-1}
\| v\|_{L^2(\partial\Omega)}  \le C
\left\{ \e^{-1/2}  \| v\|_{L^2(\Sigma_{c\e})}  +
 \| v\|_{L^2(\Sigma_{c\e})}^{1/2} \|\nabla v\|_{L^2(\Sigma_{c\e})}^{1/2} \right\},
\end{equation}
we obtain 
$$
\aligned
\sqrt{\e} \| v_\e \|_{L^2(\partial\Omega)}
 & \le C \big\{  \| v_\e \|_{L^2(\Omega_\e)} + \e \| \nabla v_\e \|_{L^2(\Omega_\e)}\big\} \\
 & \le C  \e \| \nabla v_\e \|_{L^2(\Omega_\e)},
\endaligned
$$ 
where we have used the Cauchy inequality for the first inequality and \eqref{PI} for the second.
It follows that the error estimate
$$
\e \| \nabla v_\e \|_{L^2(\Omega_\e)} =o(\sqrt{\e}) \quad \text{ as } \e \to 0,
$$
 cannot hold in general.
 In fact, if $\Omega$ is smooth and uniformly convex, then
 \begin{equation}\label{convex}
 \aligned
 \lim_{\e\to 0} \fint_{\partial\Omega} |v_\e|^2\, d\sigma
 & =\lim_{\e \to 0} \fint_{\partial\Omega} | W(x/\e) (f-\nabla  p_0)|^2\, d\sigma\\
 &=\fint_{\partial\Omega} | K (f-\nabla p_0)|^2\, d\sigma.
 \endaligned
 \end{equation}  
 See the proof of Lemma 3.2 in \cite{Alek-2015}.
Also, note that by Theorem \ref{main-theorem-1}, 
$$
 \| \nabla v_\e \|_{L^2(\Omega_\e)}
\le C  \e^{-1/2} \| f\|_{C^{1, 1/2}(\Omega)}.
$$
This, together with \eqref{last-1}, yields
$$
\| v_\e\|_{L^2(\partial\Omega)}
\le  C \left \{ \e^{- 1/2} \| v_\e\|_{L^2(\Omega_\e)} 
+ \big( \e^{-1/2} \| v_\e\|_{^2(\Omega_\e)} \big)^{1/2} \| f\|_{C^{1, 1/2} (\Omega)}^{1/2} \right\}.
$$
As a result, it is not possible to have
$$
\|  v_\e \|_{L^2(\Omega_\e)} =o(\sqrt{\e}) \quad \text{ as } \e \to 0,
$$
unless $f=\nabla p_0$ in $\Omega$, in which case, $v_\e \equiv 0$ in $\Omega_\e$.

Finally,  to  see \eqref{last-1}, choose a function $\beta\in C^1(\R^d, \R^d)$ such that
$ \beta\cdot n \ge c_0>0$ on $\partial\Omega$, supp$(\beta) \subset \{ x\in \R^d: \text{dist}(x, \partial\Omega)\le  c \e\}$, and
$| \nabla \beta| \le C \e^{-1}$.
It follows by the divergence theorem that 
$$
\aligned
c_0 \int_{\partial\Omega} |v|^2\, d\sigma
&\le \int_{\partial\Omega} |v|^2\,  \beta \cdot n \, d\sigma
\le \int_\Omega |v|^2 \text{div} (\beta)\, dx
+  2  \int_{\Omega} |v| |\nabla v| |\beta|\, dx\\
&\le C \e^{-1} \int_{\Sigma_{c\e}} |v|^2\, dx
+  C \| v\|_{L^2(\Sigma_{c\e})} \| \nabla v\|_{L^2(\Sigma_{c\e})},
\endaligned
$$
where we have used the Cauchy inequality for the last step.
}
\end{remark}

 \bibliographystyle{amsplain}
 
\bibliography{Darcy-Law.bbl}

\providecommand{\bysame}{\leavevmode\hbox to3em{\hrulefill}\thinspace}
\providecommand{\MR}{\relax\ifhmode\unskip\space\fi MR }
% \MRhref is called by the amsart/book/proc definition of \MR.
\providecommand{\MRhref}[2]{%
  \href{http://www.ams.org/mathscinet-getitem?mr=#1}{#2}
}
\providecommand{\href}[2]{#2}
\begin{thebibliography}{10}

\bibitem{Alek-2015}
H.~Aleksanyan, H.~Shahgholian, and P.~Sj\"{o}lin, \emph{Applications of
  {F}ourier analysis in homogenization of the {D}irichlet problem: {$L^p$}
  estimates}, Arch. Ration. Mech. Anal. \textbf{215} (2015), no.~1, 65--87.

\bibitem{Allaire-89}
G.~Allaire, \emph{Homogenization of the {S}tokes flow in a connected porous
  medium}, Asymptotic Anal. \textbf{2} (1989), no.~3, 203--222.

\bibitem{Allaire-91a}
\bysame, \emph{Continuity of the {D}arcy's law in the low-volume fraction
  limit}, Ann. Scuola Norm. Sup. Pisa Cl. Sci. (4) \textbf{18} (1991), no.~4,
  475--499.

\bibitem{Allaire-1997}
G.~Allaire and A.~Mikeli\'{c}, \emph{One-phase {N}ewtonian flow},
  Homogenization and porous media, Interdiscip. Appl. Math., vol.~6, Springer,
  New York, 1997, pp.~45--76, 259--275.

\bibitem{CPS-2007}
G.~A. Chechkin, A.~L. Piatnitski, and A.~S. Shamaev, \emph{Homogenization},
  Translations of Mathematical Monographs, vol. 234, American Mathematical
  Society, Providence, RI, 2007, Methods and applications, Translated from the
  2007 Russian original by Tamara Rozhkovskaya.

\bibitem{FKV}
E.~B. Fabes, C.~E. Kenig, and G.~C. Verchota, \emph{The {D}irichlet problem for
  the {S}tokes system on {L}ipschitz domains}, Duke Math. J. \textbf{57}
  (1988), no.~3, 769--793.

\bibitem{G-Z-2019}
S.~Gu and J.~Zhuge, \emph{Periodic homogenization of {G}reen's functions for
  {S}tokes systems}, Calc. Var. Partial Differential Equations \textbf{58}
  (2019), no.~3, Paper No. 114, 46.

\bibitem{Mik-1994}
W.~J\"{a}ger and A.~Mikeli\'{c}, \emph{On the flow conditions at the boundary
  between a porous medium and an impervious solid}, Progress in partial
  differential equations: the {M}etz surveys, 3, Pitman Res. Notes Math. Ser.,
  vol. 314, Longman Sci. Tech., Harlow, 1994, pp.~145--160.

\bibitem{Jing-2020-L}
W.~Jing, \emph{{Layer potentials for Lam\'e systems and homogenization of
  perforated elastic medium with clamped holes}}, arXiv:2007.03333v2 (2020).

\bibitem{Jing-2020}
\bysame, \emph{A unified homogenization approach for the {D}irichlet problem in
  perforated domains}, SIAM J. Math. Anal. \textbf{52} (2020), no.~2,
  1192--1220.

\bibitem{Murat-1989}
H.~Kacimi and F.~Murat, \emph{Estimation de l'erreur dans des probl\`emes de
  {D}irichlet o\`u apparait un terme \'{e}trange}, Partial differential
  equations and the calculus of variations, {V}ol. {II}, Progr. Nonlinear
  Differential Equations Appl., vol.~2, Birkh\"{a}user Boston, Boston, MA,
  1989, pp.~661--696.

\bibitem{Kenig-book}
C.~E. Kenig, \emph{Harmonic analysis techniques for second order elliptic
  boundary value problems}, CBMS Regional Conference Series in Mathematics,
  vol.~83, Published for the Conference Board of the Mathematical Sciences,
  Washington, DC; by the American Mathematical Society, Providence, RI, 1994.

\bibitem{KLS-2012}
C.~E. Kenig, F.~Lin, and Z.~Shen, \emph{Convergence rates in {$L^2$} for
  elliptic homogenization problems}, Arch. Ration. Mech. Anal. \textbf{203}
  (2012), no.~3, 1009--1036.

\bibitem{Lions-1980}
J.-L. Lions, \emph{Asymptotic expansions in perforated media with a periodic
  structure}, Rocky Mountain J. Math. \textbf{10} (1980), no.~1, 125--140.

\bibitem{LA-1990}
R.~Lipton and M.~Avellaneda, \emph{Darcy's law for slow viscous flow past a
  stationary array of bubbles}, Proc. Roy. Soc. Edinburgh Sect. A \textbf{114}
  (1990), no.~1-2, 71--79.

\bibitem{Lu-2020}
Y.~Lu, \emph{Homogenization of {S}tokes equations in perforated domains: a
  unified approach}, J. Math. Fluid Mech. \textbf{22} (2020), no.~3, Art. 44,
  13.

\bibitem{Mik-1996}
E.~Maru\v{s}i\'{c}-Paloka and A.~Mikeli\'{c}, \emph{An error estimate for
  correctors in the homogenization of the {S}tokes and the {N}avier-{S}tokes
  equations in a porous medium}, Boll. Un. Mat. Ital. A (7) \textbf{10} (1996),
  no.~3, 661--671.

\bibitem{Mik-2000}
\bysame, \emph{The derivation of a nonlinear filtration law including the
  inertia effects via homogenization}, Nonlinear Anal. \textbf{42} (2000),
  no.~1, Ser. A: Theory Methods, 97--137.

\bibitem{Masmoudi-2004}
N.~Masmoudi, \emph{Some uniform elliptic estimates in a porous medium}, C. R.
  Math. Acad. Sci. Paris \textbf{339} (2004), no.~12, 849--854.

\bibitem{Mik-1991}
A.~Mikeli\'{c}, \emph{Homogenization of nonstationary {N}avier-{S}tokes
  equations in a domain with a grained boundary}, Ann. Mat. Pura Appl. (4)
  \textbf{158} (1991), 167--179.

\bibitem{MM-2011}
D.~Mitrea and I.~Mitrea, \emph{On the regularity of {G}reen functions in
  {L}ipschitz domains}, Comm. Partial Differential Equations \textbf{36}
  (2011), no.~2, 304--327.

\bibitem{OSY-1992}
O.~A. Ole\u{\i}nik, A.~S. Shamaev, and G.~A. Yosifian, \emph{Mathematical
  problems in elasticity and homogenization}, Studies in Mathematics and its
  Applications, vol.~26, North-Holland Publishing Co., Amsterdam, 1992.

\bibitem{Sanchez-1980}
E.~S\'{a}nchez-Palencia, \emph{Nonhomogeneous media and vibration theory},
  Lecture Notes in Physics, vol. 127, Springer-Verlag, Berlin-New York, 1980.

\bibitem{Temam}
R.~Temam, \emph{Navier-{S}tokes equations}, AMS Chelsea Publishing, Providence,
  RI, 2001, Theory and numerical analysis, Reprint of the 1984 edition.

\end{thebibliography}

\bigskip

\begin{flushleft}

Zhongwei Shen,
Department of Mathematics,
University of Kentucky,
Lexington, Kentucky 40506,
USA.

E-mail: zshen2@uky.edu
\end{flushleft}

\bigskip

\end{document}